\documentclass[oneside]{amsart}

\usepackage[letterpaper, total={6.5in, 8.5in}]{geometry}
\usepackage{amsmath}
\usepackage{amsfonts}
\usepackage{amssymb}
\usepackage[dvipsnames]{xcolor}
\usepackage{mathrsfs}
\usepackage{enumerate}
\usepackage{amsthm}
\usepackage{xcolor}
\usepackage{color}
\usepackage[pdfdisplaydoctitle=true,
            colorlinks=true,
            urlcolor=blue,
            citecolor=blue,
            linkcolor=blue,
            pdfstartview=FitH,
            pdfpagemode= UseNone,
            bookmarksnumbered=true]{hyperref}
\usepackage{tikz}
\usetikzlibrary{decorations.pathreplacing}
\usepackage{mathrsfs}
\usepackage{pgfplots}
\usepackage[normalem]{ulem}
\usepgfplotslibrary{fillbetween}
\pgfplotsset{compat=1.18}

\definecolor{darkgreen}{rgb}{0,0.5,0}
\definecolor{darkred}{rgb}{0.7, 0, 0}

\makeatletter
\def\@tocline#1#2#3#4#5#6#7{\relax
  \ifnum #1>\c@tocdepth 
  \else
    \par \addpenalty\@secpenalty\addvspace{#2}%
    \begingroup \hyphenpenalty\@M
    \@ifempty{#4}{%
      \@tempdima\csname r@tocindent\number#1\endcsname\relax
    }{%
      \@tempdima#4\relax
    }%
    \parindent\z@ \leftskip#3\relax \advance\leftskip\@tempdima\relax
    \rightskip\@pnumwidth plus4em \parfillskip-\@pnumwidth
    #5\leavevmode\hskip-\@tempdima
      \ifcase #1
       \or\or \hskip 1em \or \hskip 2em \else \hskip 3em \fi%
      #6\nobreak\relax
    \hfill\hbox to\@pnumwidth{\@tocpagenum{#7}}\par
    \nobreak
    \endgroup
  \fi}
\makeatother

\newtheorem{theorem}{Theorem}[section]
\newtheorem{lemma}[theorem]{Lemma}
\newtheorem{proposition}[theorem]{Proposition}
\newtheorem{corollary}[theorem]{Corollary}

\theoremstyle{definition}

\newtheorem{example}[theorem]{Example}

\newtheorem*{theorem*}{Theorem}
\theoremstyle{remark}
\newtheorem{remark}[theorem]{Remark}

\numberwithin{equation}{section}
\newcommand{\R}{\mathbb{R}}
\newcommand{\T}{\mathbb{T}}

\newcommand{\Z}{\mathbb{Z}}
\newcommand{\C}{\mathbb{C}}

\newcommand{\G}{\mathscr{G}}
\newcommand{\g}{\mathfrak{g}}

\newcommand{\symp}{\nabla^\perp}
\DeclareMathOperator{\curl}{curl}

\DeclareMathOperator{\Ad}{Ad}
\DeclareMathOperator{\ad}{ad}
\DeclareMathOperator{\adstar}{\ad^\star\!\!}
\DeclareMathOperator{\Adstar}{\Ad^{\star}\!\!}

\DeclareMathOperator{\diver}{div}

\newcommand{\gflat}{g_{\mathrm{flat}}}
\newcommand{\ghyp}{g_{\mathrm{hyp}}}
\newcommand{\ground}{g_{\mathrm{round}}}

\newcommand{\abs}[1]{\lvert#1\rvert}

\newcommand{\ip}[1]{\langle #1 \rangle}

\newcommand{\ce}{\epsilon}
\newcommand{\proj}{\tau}

\newcommand{\inert}{\mathcal{A}} 
\newcommand{\vf}{\mathfrak{X}} 
\newcommand{\df}{\mathfrak{X}_{\mu}} 
\newcommand{\hm}{\mathcal{H}} 
\newcommand{\ff}{\mathfrak{X}_{\mu,\text{ex}}} 
\newcommand{\ef}{\mathfrak{q}} 
\newcommand{\kev}{\lambda} 
\newcommand{\iev}{\alpha} 
\newcommand{\aev}{\zeta} 
\newcommand{\cs}{U_{\mathfrak{C}}} 
\newcommand{\rs}{U_{\mathfrak{R}}} 
\newcommand{\is}{U_{\mathfrak{I}}} 
\newcommand{\cecs}{\mathcal{Q}} 
\newcommand{\freq}{\varpi} 

\newcommand{\assleg}{Q}
\newcommand{\curv}{\kappa}
\newcommand{\scurv}{s_{\curv}}

\newcommand{\eulerclass}{k}
\newcommand{\stream}{f}
\newcommand{\fnone}{\varphi}
\newcommand{\fntwo}{\varsigma}
\newcommand{\fnthree}{a}
\newcommand{\fnfour}{b}
\newcommand{\fnfive}{f}
\newcommand{\radiusone}{{\varepsilon_1}}
\newcommand{\radiustwo}{{\varepsilon_2}}

\author[1]{Patrick Heslin}
\author[2]{Stephen C. Preston}

\address{P. Heslin: Max Planck Institute for Mathematics in the Sciences, 04103, Leipzig, Germany}
\email{patrick.heslin@mis.mpg.de}
\address{S.C. Preston: Brooklyn College and CUNY Graduate Center, USA}
\email{stephen.preston@brooklyn.cuny.edu}

\begin{document}

\title[Exact non-stationary solutions of the Euler equations]{Exact non-stationary solutions of the Euler equations \\ in two and three dimensions}

\subjclass[2000]{58D05}

\begin{abstract}
We develop, via Arnold’s geometric framework, a mechanism for constructing explicit, smooth, global-in-time, and typically non-stationary solutions of the incompressible Euler equations. The approach introduces a notion of generalized Coriolis force, whose spectrum underlies the construction of these solutions. We recover classical exact solutions such as Kelvin and Rossby-Haurwitz waves, while also producing new explicit examples on curved surfaces and three-dimensional manifolds including the round three-sphere. Furthermore, we obtain a complete classification in two dimensions and a partial classification in three dimensions of the Riemannian manifolds that admit such solutions. The method is in fact formulated in the general Euler-Arnold setting and yields a simple criterion for non-stationarity.
\end{abstract}

\maketitle

\tableofcontents

\enlargethispage{2\baselineskip}

\section{Introduction}\label{introduction}

Introduced by Euler in 1757, the equations of ideal hydrodynamics
\begin{equation}\label{euler}
    \begin{split}
        &\partial_t u + \nabla_u u = -\nabla p, \\
        &\diver u = 0,
    \end{split}
\end{equation}
describe the motion of an incompressible, inviscid fluid. The first rigorous results on local existence and uniqueness for \eqref{euler} date back to the early twentieth century, obtained independently by G{\"u}nther and Lichtenstein. Wolibner later demonstrated that solutions for two-dimensional fluids extend globally in time. Subsequent advances, notably by Yudovich, Kato, and others, sharpened these results and clarified the role of regularity and vorticity structure. A detailed historical account, along with further developments and references, can be found in the book of Majda and Bertozzi \cite{majda2002vorticity} and the survey of Drivas and Elgindi \cite{drivas2023singularity}.

Arnold’s 1966 work \cite{arnold2013differential} recast ideal fluid motion in geometric terms. In this formulation, the Euler equations coincide with the reduced geodesic equation on the group of volume-preserving diffeomorphisms equipped with a right-invariant $L^2$ metric arising from kinetic energy. Ebin and Marsden \cite{ebin1970groups} proved that the corresponding geodesic flow in the $H^s$ Sobolev setting is governed by an ordinary differential equation and obtained smoothness of the associated Riemannian exponential map, which may be identified with the Lagrangian flow map of \eqref{euler}. This geometric formulation has since played a central role in the study of stability, curvature, and qualitative features of fluid motion. An extensive treatment of this approach can be found in the monograph of Arnold and Khesin \cite{arnold2021topological}.

Despite the extensive literature on the Euler equations, comparatively few classes of explicit, smooth, non-stationary solutions are known, particularly in three dimensions. Classical examples include Kelvin waves, Rossby-Haurwitz waves on the sphere, and certain rotating or shearing flows; see, for example, \cite{craik1986evolution, lamb1993hydrodynamics, skiba2017mathematical}. More recent examples along the same lines can be found in Vi{\' u}dez~\cite{viudez2022exact}. Many other explicit constructions appearing in the literature either possess infinite kinetic energy, rely on singular vorticity distributions, or arise as solutions to reduced or symmetry-constrained models \cite{aref2007point, sarria2013blow}. A detailed discussion of such examples can be found in \cite{majda2002vorticity} and the references therein.

\enlargethispage{2\baselineskip}

The central result of this paper utilizes Arnold’s geometric framework to provide a general mechanism for constructing a class of explicit, smooth, global-in-time and typically non-stationary solutions to the Euler equations \eqref{euler} and, more broadly, to Euler-Arnold equations on Lie groups. The underlying mechanism can be understood from two complementary viewpoints.

The first viewpoint is perturbative, though no smallness assumption is imposed on the size of the perturbation. One begins with a Killing field $X$ whose flow, by definition, acts by isometries and hence produces a steady Euler flow. Introducing a perturbation of this steady solution and formally expanding the resulting solution of \eqref{euler} leads to an infinite system of coupled ordinary differential equations governing the evolution of the expansion coefficients. We demonstrate that if the second-order term vanishes, all higher-order terms vanish. Hence the solution of the linearized Euler equation is also a solution of the full Euler equation.

A particularly simple class of solutions to this system arises when the perturbation is chosen from a complex simultaneous eigenfield of the inertia operator\footnote{For ideal hydrodynamics, this is the Hodge Laplacian when the fluid is two-dimensional and the curl operator when the fluid is three-dimensional.} and the Lie algebra coadjoint operator associated with $X$. In this situation the nonlinear interactions close on a finite-dimensional subspace, producing explicit time-dependent solutions. Importantly, this construction yields exact solutions of the fully nonlinear Euler equations \eqref{euler}, rather than solutions of a linearized model.

A second more geometric viewpoint proceeds at the level of Lagrangian flows. One begins with the flow generated by an arbitrary time-dependent divergence-free vector field $V(t)$ and precomposes it with the flow generated by a Killing field $X$. Requiring that the resulting flow satisfies the Euler equations leads to an evolution equation for $V(t)$. The structure of this equation reveals that the spectrum of the coadjoint operator associated with $X$ naturally produces a particularly simple class of its solutions. Once a suitable solution $V(t)$ is obtained, precomposing again with the isometric flow generated by $X$ produces a corresponding solution of the Euler equations. Moreover, the underlying Lie algebraic structure provides a simple criterion for determining when the resulting Euler flow is genuinely non-stationary.

When applied to two-dimensional ideal hydrodynamics our construction recovers several well-known solutions, including Kelvin waves on flat domains and Rossby-Haurwitz waves on the round sphere\footnote{Note that some of these have been independently derived as Euler solutions by other authors~\cite{azencot2015discrete, chern2024force}}. In addition, we obtain explicit solutions on curved surfaces such as the hyperbolic disk. In three dimensions we produce new explicit solutions of the Euler equations on manifolds including the round three-sphere and various circle and torus bundles with nontrivial geometry. On the three-sphere, these solutions may be viewed as higher-dimensional analogues of Rossby-Haurwitz waves: they are periodic in time and occur in families of multiplicity greater than one.

Although our primary focus is on explicit constructions, the geometric nature of the solutions suggests that they may exhibit interesting stability properties. We expect that the methods developed here will be useful in future investigations in this direction from the Euler-Arnold viewpoint.

In all of our examples the chosen Killing field has the additional property that its image under the inertia operator is also Killing. For example, on a flat cylinder, the curl of the rotational vector field generates vertical translations, which are themselves isometries. All explicit examples presented in this paper fall into this category. We therefore undertake a systematic study of manifolds admitting such Killing fields. In two dimensions, we obtain a complete classification, while in three dimensions we give a partial classification.

We emphasize that although the motivating examples come from ideal hydrodynamics, the construction itself applies to a wide class of Euler-Arnold equations on Lie groups endowed with right-invariant metrics. The construction further admits a natural interpretation in terms of a generalized Coriolis force, which we outline. This perspective clarifies the role of classical rotating-wave solutions and places them within a broader geometric framework.

The paper is organized as follows. Section~\ref{ideal hydrodynamics} introduces the general mechanism for constructing solutions in the context of ideal hydrodynamics and formulates sufficient conditions under which the underlying manifold admits an abundance of such solutions. Sections~\ref{2D} and~\ref{3D} provide a full and partial classification, respectively, of two- and three-dimensional manifolds satisfying these conditions, together with explicit examples. Section~\ref{general framework} develops the construction in the general Euler-Arnold setting. Section~\ref{future} outlines directions for future work. Lastly, Appendix \ref{geometric lemmas} contains proofs of a Riemannian geometric nature which are deferred for readability.

\enlargethispage{4\baselineskip}

\subsection*{Acknowledgments}
The authors thank Abhijit Champanerkar, Albert Chern, Theodore Drivas, Boris Khesin, Gerard Misio{\l}ek, Daniel Peralta-Salas and Cornelia Vizman for helpful discussions and comments.

\section{Ideal Hydrodynamics}\label{ideal hydrodynamics}
Let $(M,g)$ be a two- or three-dimensional compact Riemannian manifold, possibly with boundary. 
Denote by $\df(M)$ the space of smooth divergence-free vector fields on $M$ that are tangent to the boundary. 
Within $\df(M)$ there is the finite-dimensional subspace of harmonic fields $\hm(M)$, consisting of divergence-free and curl-free fields tangent to the boundary. By the Hodge decomposition (cf.~\cite{ebin1970groups}) we have the splitting
\begin{equation*}
    \df(M) = \ff(M) \oplus_{L^2} \hm(M),
\end{equation*}
where $\ff(M)$ denotes the $L^2$-orthogonal complement of $\hm(M)$ in $\df(M)$. We recall the usual musical isomorphisms $\flat$ and $\sharp$, the Hodge star operator $\star$, the exterior derivative $d$, and its formal adjoint $\delta$.

Note that, in two dimensions, every element of $\ff(M)$ can be written as a skew-gradient
\begin{equation*}
    v = \symp \psi = (\star d\psi)^{\sharp},
\end{equation*}
and we refer to $\psi$ as a stream function for $v$.

We define the inertia operator $\inert$ by
\begin{equation}\label{inertia operator}
    \inert =
    \begin{cases}
        \Delta & \dim(M)=2, \\
        \curl & \dim(M)=3,
    \end{cases}
\end{equation}
where $\Delta$ is the positive-definite Hodge Laplacian
\begin{equation}\label{hodge laplacian}
    \Delta u = (d\delta + \delta d)(u^\flat)^{\sharp},
\end{equation}
and the curl operator is defined by
\begin{equation}\label{curl}
    \curl u = (\star (du^\flat))^{\sharp}.
\end{equation}

\begin{lemma}\label{inertia basis}
    Let $(M,g)$ be a two- or three-dimensional compact Riemannian manifold, possibly with boundary. The inertia operator $\inert$ given in \eqref{inertia operator} is invertible on $\ff(M)$, and its inverse $\inert^{-1}$ is compact and self-adjoint in $L^2$. Consequently, $\inert$ has a discrete spectrum $\{\iev_k\}$ with finite-dimensional eigenspaces, and the corresponding eigenfields form an $L^2$-orthonormal basis of $\ff(M)$.
\end{lemma}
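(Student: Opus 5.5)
The plan is to treat the two cases separately, but with one common strategy. First I show that $\inert$ maps $\ff(M)$ into itself (densely) and is symmetric there. Next I construct $\inert^{-1}$ explicitly as a bounded operator built from elliptic theory. Compactness will come from Rellich's theorem, and the spectral conclusion then follows from the spectral theorem for compact self-adjoint operators. Since $\inert^{-1}$ is injective, its eigenvalues $\mu_k\neq 0$ accumulate only at $0$. Setting $\iev_k=1/\mu_k$ gives a discrete spectrum of $\inert$ with finite-dimensional eigenspaces, and the eigenfields form an $L^2$-orthonormal basis of the closure of $\ff(M)$. Elliptic regularity upgrades these eigenfields to smooth fields.

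\emph{Two dimensions.} Write $v=\symp\psi$ with $\psi$ normalized to vanish on $\partial M$; this is possible since $v$ is tangent to the boundary, so $\psi$ is locally constant there. On $\ff(M)$ the boundary constants can all be taken to be zero. Indeed, orthogonality to $\hm(M)$ is what fixes them: the harmonic fields are exactly the skew-gradients of harmonic functions that are constant on each boundary component. For a skew-gradient one checks $\Delta \symp\psi=\symp(\Delta\psi)$, with $\Delta$ the positive Laplacian on functions. So on $\ff(M)$ the operator $\inert$ is conjugate to $\omega\mapsto$ the Laplacian acting on $\psi$, with $\psi$ subject to the appropriate boundary conditions.

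I then define $\inert^{-1}\symp\omega := \symp\psi$, where $\psi$ solves $\Delta\psi=\omega$ with the constraints above. This is a Dirichlet-type problem, uniquely solvable with $H^2$ estimates, after projecting off $\hm(M)$ when needed. The map $v\mapsto\inert^{-1}v$ is bounded from $L^2$ to $H^1$, hence compact on $L^2$ by Rellich. Self-adjointness follows from Green's identity, $\langle \symp\psi_1,\symp\psi_2\rangle=\int \nabla\psi_1\cdot\nabla\psi_2$, together with the vanishing boundary terms. In the closed case, the orthogonal complement of constants plays the role of the boundary condition.

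\emph{Three dimensions.} This is the main obstacle. Symmetry of curl on fields tangent to the boundary comes from $\int \langle\curl u,w\rangle-\langle u,\curl w\rangle=\int_{\partial M} \iota^*(u^\flat\wedge w^\flat)$, which vanishes when $u$ and $w$ are both tangent to the boundary. However, $\curl u$ need not be tangent to the boundary, so I restrict the domain appropriately. Injectivity on $\ff(M)$: if $\curl u=0$ and $u\in\df(M)$, then $u\in\hm(M)$, so $u=0$.

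For the inverse I use the Biot–Savart operator. Given $w\in\ff(M)$, I would solve for $u$ with $\curl u=w$, $\diver u=0$, $u$ tangent to $\partial M$, and $u\perp\hm(M)$. The construction is to find a vector potential via the Hodge–Morrey–Friedrichs decomposition, as in Cantarella–DeTurck–Gluck. This requires $w$ to be exact in the sense that $w^\flat$ pulls back to $\star$ of an exact form with zero flux through boundary components. This holds for $w\in\ff(M)$, possibly after a careful choice of which orthogonal complement is used; this identification is the delicate point. The resulting operator is bounded $L^2\to H^1$, hence compact, and symmetric by the boundary formula above, hence self-adjoint. I would cite the Cantarella–DeTurck–Gluck analysis of the Biot–Savart operator for these facts rather than reprove them.
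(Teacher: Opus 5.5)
Your two-dimensional argument is essentially the paper's: write $v=\symp\psi$ with $\psi|_{\partial M}=0$, set $\inert^{-1}\symp\omega=\symp\Delta_D^{-1}\omega$, and use Green's identity for symmetry. One correction is needed there. $\hm$ is not in general the set of skew-gradients of harmonic functions constant on boundary components: on the flat $\T^2$ it consists of the constant fields, which have no global stream function. It is orthogonality to such non-exact harmonic fields that guarantees a global stream function exists at all. The Hodge--Morrey--Friedrichs decomposition gives $\star\ff^\flat=\{d\psi:\psi|_{\partial M}=0\}$ directly.

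The genuine gap is the symmetry of $\curl$ in three dimensions. Your identity $\int_M \big(g(\curl u,w)-g(u,\curl w)\big)\mu=\int_{\partial M}\iota^*(u^\flat\wedge w^\flat)$ is correct. However, $\iota^*(u^\flat\wedge w^\flat)=\pm g(u\times w,\nu)\,dA$ depends only on the tangential components. It therefore vanishes for fields \emph{normal} to $\partial M$, not for tangent ones.

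Restricting to $u$ with tangent curl does not fix this. On the flat solid torus $D^2\times S^1$, take $u_1=\tfrac12\partial_\theta$ and $u_2=(1-2r^2)\partial_z$. Both lie in $\ff$, with tangent curls $\partial_z$ and $4\partial_\theta$, yet $\langle\curl u_1,u_2\rangle_{L^2}=0\neq 2\pi^2=\langle u_1,\curl u_2\rangle_{L^2}$.

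What actually kills the boundary term is that $\curl u_i\in\ff$, i.e.\ the curl is also flux-free. Then $(\curl u_i)^\flat$ lies in the co-exact Neumann summand of the Hodge--Morrey--Friedrichs decomposition. Hence $du_i^\flat=\star(\curl u_i)^\flat=d\beta_i$ with $\iota^*\beta_i=0$. The forms $c_i=u_i^\flat-\beta_i$ are closed on $M$ with $\iota^*c_i=\iota^*u_i^\flat$, so the boundary term equals $\int_M d(c_1\wedge c_2)=0$. Thus orthogonality to $\hm$ is needed for self-adjointness, not only for injectivity. This is the role of the zero-flux condition in the Yoshida--Giga realization the paper cites.

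Conversely, the step you flag as delicate is fine. For $w\in\ff$ the form $\star w^\flat$ is relatively exact, and $d$ maps the co-exact Neumann $1$-forms onto all exact $2$-forms. Hence $w=\curl u$ for a unique $u\in\ff$. Note also that the Cantarella--DeTurck--Gluck Biot--Savart kernel is Euclidean, so on a general $(M,g)$ this Hodge-theoretic route is what you actually need.
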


\begin{proof}
In two dimensions, for each $v \in \ff$, we have $v = \symp \psi$, for some stream function $\psi$ vanishing on all boundary components. Hence, we have
$\inert \symp \psi = \symp \Delta \psi,$
so that $\inert^{-1}(\symp \psi) = \symp \Delta^{-1}\psi$, where $\Delta^{-1}$ denotes the inverse Laplacian with Dirichlet boundary conditions on the stream function. The $L^2$ norm on skew-gradients coincides with the Dirichlet norm on the stream functions, and $\Delta^{-1}$ is compact and self-adjoint in this inner product, cf. \cite{evans2010partial}. 

In three dimensions, the kernel of $\inert$ is the space of vector fields tangent to the boundary for which the divergence and curl both vanish, which is precisely $\hm$. The result of Yoshida-Giga~\cite{yoshidagiga} then shows that $\inert$ is invertible on $\ff$ and self-adjoint in the case where $M\subset \mathbb{R}^3$, and the same argument extends to any compact Riemannian $3$-manifold with boundary.
\end{proof}

Next, let $u_0 \in \ff(M)$ and consider the operator given by
\begin{equation}\label{coadjoint operator}
    K_{u_0} : \ff(M) \rightarrow \ff(M) \ ; \ v \mapsto \inert^{-1}[v, \inert u_0]
\end{equation}
where for $v,w \in \ff(M)$ the bracket $[v,w] = vw-wv$ denotes the standard commutator of vector fields. In accordance with the framework outlined in Section~\ref{general framework}, we will refer to the operators $\ad_{u_0} : v \mapsto -[u_0, v]$ and $v\mapsto K_{u_0}v$ as the adjoint and coadjoint operators associated to $u_0$ respectively. The operator $K_{u_0}$ is compact if the fluid is two-dimensional, while it is generally only bounded if the fluid is three-dimensional; see \cite{ebin2006singularities} for further discussion and consequences.

With this notation, the Euler equations \eqref{euler} can be written in the form
\begin{equation}\label{inertia euler}
    \partial_t \inert u + [u,\inert u] = 0, 
    \qquad \diver u = 0, 
    \qquad u \parallel \partial M.
\end{equation}
This equation uniquely determines $u$ up to harmonic fields.

We are now ready to state our main theorem in the context of ideal hydrodynamics.

\begin{theorem}\label{simultaneous eigenvalue to euler solution}
    Let $(M,g)$ be a Riemannian manifold, possibly with boundary, and let $u_0 \in \ff(M)$ be a steady solution of the Euler equations \eqref{euler}, i.e.,\ $[u_0,\inert u_0]=0$.
    If there exists a complex vector field $\ef=v+iw\in \C\otimes\ff(M)$, with $v,w\in\ff(M)$, and real constants $\kev,\iev,\aev$ such that
    \begin{equation*}
        K_{u_0} \ef = -i \kev \ef, \qquad \inert \ef = \iev \ef, \qquad [u_0, \ef] = i \aev \ef,
    \end{equation*}
    then, defining $\freq=\kev-\aev$, the curve
    \begin{equation*}
        \cs(t) = u_0 + e^{i\freq t}\ef
    \end{equation*}
    is a complex-valued solution of the Euler equations \eqref{euler}.
    
    Furthermore, the real and imaginary parts
    \begin{equation*}
        \rs(t) = u_0 + \mathrm{Re}\big(e^{i\freq t}\ef\big) = u_0 + \cos(\freq t)v -  \sin(\freq t)w
    \end{equation*}
    and
    \begin{equation*}
        \is(t) = \mathrm{Im}\big(e^{i\freq t}\ef\big) =  \sin(\freq t)v + \cos(\freq t)w
    \end{equation*}
    are real solutions to the Euler equation \eqref{euler} and its linearization at $\rs$ respectively.
    
    Finally, the real-valued solution $\rs$ is stationary if and only if $\kev=\aev$.
\end{theorem}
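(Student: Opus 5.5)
The plan is to substitute the ansatz $\cs(t)=u_0+e^{i\freq t}\ef$ directly into the inertia form \eqref{inertia euler} of the Euler equations, extended complex-bilinearly to $\C\otimes\ff(M)$. Divergence-freeness and tangency to $\partial M$ are automatic, since $u_0$ and $\ef$ lie in $\C\otimes\ff(M)$. Using $\inert\ef=\iev\ef$ gives $\inert\cs=\inert u_0+\iev e^{i\freq t}\ef$ and $\partial_t\inert\cs=i\freq\iev e^{i\freq t}\ef$. Expanding the bracket bilinearly,
\begin{equation*}
[\cs,\inert\cs]=[u_0,\inert u_0]+\iev e^{i\freq t}[u_0,\ef]+e^{i\freq t}[\ef,\inert u_0]+\iev e^{2i\freq t}[\ef,\ef].
\end{equation*}
The first term vanishes because $u_0$ is steady, and the last vanishes by antisymmetry, which persists after complexification. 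The second term equals $i\aev\iev e^{i\freq t}\ef$ by hypothesis.

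For the third term I use the definition \eqref{coadjoint operator}: $[\ef,\inert u_0]=\inert K_{u_0}\ef=-i\kev\inert\ef=-i\kev\iev\ef$. Summing, the left side of \eqref{inertia euler} becomes $i\iev e^{i\freq t}(\freq+\aev-\kev)\ef$, which is zero exactly because $\freq=\kev-\aev$. The step I expect to need the most care is the identity $[\ef,\inert u_0]=\inert K_{u_0}\ef$. Since $\inert^{-1}$ is only defined on $\ff(M)$ (Lemma~\ref{inertia basis}), this requires the bracket $[v,\inert u_0]$ to lie in the range of $\inert$. If it instead has a harmonic or gradient component, I would interpret \eqref{inertia euler} modulo that component, as the remark after \eqref{inertia euler} allows, so that the equation only has to hold up to the projection implicit in $K_{u_0}$. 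I would state this convention explicitly before carrying out the computation.

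For the real statements, write $\cs=\rs+i\is$. Since $u_0$ is real, $\rs=u_0+\mathrm{Re}(e^{i\freq t}\ef)$ and $\is=\mathrm{Im}(e^{i\freq t}\ef)$. Taking real and imaginary parts of the complex equation, using that all the operators involved are real, gives
\begin{equation*}
\partial_t\inert\rs+[\rs,\inert\rs]-[\is,\inert\is]=0,\qquad \partial_t\inert\is+[\rs,\inert\is]+[\is,\inert\rs]=0.
\end{equation*}
Because $\inert$ is real, $\inert\is=\mathrm{Im}(e^{i\freq t}\iev\ef)=\iev\is$, so $[\is,\inert\is]=0$. The first equation is then exactly Euler for $\rs$. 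The second is the linearization of \eqref{inertia euler} at $\rs$ in the direction $\is$. Expanding $\mathrm{Re}(e^{i\freq t}(v+iw))$ gives the stated cosine and sine formulas.

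Finally, for stationarity I assume $\ef\neq0$. If $\kev=\aev$ then $\freq=0$ and $\rs\equiv u_0+v$ is constant in time. Conversely, if $\freq\neq0$ and $\rs$ is constant, then comparing $t=0$ with $t=\pi/(2\freq)$ gives $v=-w$. Comparing $t=0$ with $t=\pi/\freq$ gives $v=-v$, so $v=0$ and hence $w=0$, contradicting $\ef\neq0$. Therefore $\rs$ is stationary if and only if $\kev=\aev$.
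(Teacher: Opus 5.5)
Your proposal is correct and is essentially the paper's proof. It uses the same substitution of $\cs$ into \eqref{inertia euler} and the same four-term expansion of the bracket, via $[u_0,\inert u_0]=0$, $[\ef,\ef]=0$, $[u_0,\ef]=i\aev\ef$ and $[\ef,\inert u_0]=\inert K_{u_0}\ef$, followed by the same real/imaginary splitting with $\inert\is=\iev\is$.

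The paper takes $[\ef,\inert u_0]=\inert K_{u_0}\ef$ as the defining property of $K_{u_0}$ in \eqref{coadjoint operator}. So your ``modulo a component'' fallback is unnecessary, and it would not give a genuine Euler solution anyway.

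The only other difference is cosmetic. For stationarity the paper differentiates and finds that $\partial_t\rs$ equals $\freq\,\is$ up to sign, whereas you compare $\rs$ at $t=0,\pi/(2\freq),\pi/\freq$. Both arguments need $\ef\neq0$, which you rightly make explicit.
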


\begin{remark}
    In fact, for each field $\ef$ we have a two-dimensional space of such solutions which corresponds to the complex span of $\ef$. Replacing $\ef$ with $\rho e^{i\sigma}\ef$ for $\rho \geq 0$ and $\sigma \in \R$ yields the entire set of solutions generated by $\ef$
    \begin{align*}
        \cs(t) &= u_0 + \rho e^{i( \sigma + \freq t)}\ef \\
        \rs(t) &= u_0 + \mathrm{Re}\Big(\rho e^{i ( \sigma + \freq t)}\ef\Big) = u_0 + \rho\cos(\sigma + \freq t)v - \rho \sin(\sigma + \freq t)w \\
        \is(t)&= \mathrm{Im}\Big(\rho e^{i ( \sigma + \freq t)}\ef\Big) = \rho \sin(\sigma + \freq t)v + \rho\cos(\sigma + \freq t)w.
    \end{align*}
    The parameter $\rho$ governs the size of the perturbation, relative to the fixed steady flow $u_0$, while $\sigma$ gives a phase. However, we will continue in this section, and indeed largely in Sections \ref{2D} and \ref{3D}, with $\rho=1$ and $\sigma=0$ for simplicity.
\end{remark}

\begin{proof}[Proof of Theorem~\ref{simultaneous eigenvalue to euler solution}]
    Computing directly, we have
    \begin{equation*}
        \partial_t \inert \cs = \partial_t \left( \inert u_0 +  \iev e^{i \freq t}\ef \right) = i \iev \freq e^{i \freq t}\ef
    \end{equation*}
    along with
    \begin{align*}
         [\cs, \inert \cs] &=  \left[ u_0 +  e^{i \freq t}\ef, \inert u_0 +  \iev e^{i \freq t}\ef\right] \\
        &= \left[ u_0, \inert u_0 \right] +  e^{i \freq t} \Big(\iev\left[u_0, \ef\right] + \left[\ef, \inert u_0\right] \Big) + e^{2i\freq t}[\ef,\ef] \\
        &=  e^{i\freq t} \Big( i \iev \aev \ef + \inert K_{u_0}\ef \Big) \\
        &= i \iev (\aev - \kev)e^{i\freq t}\ef \\
        &= -i \iev \freq e^{i\freq t}\ef
    \end{align*}
    and hence $\cs$ is a complex-valued solution of the Euler equations \eqref{euler}.

    Decomposing $U = \rs + i \is$ in $\partial_t \inert U + [U, \inert U] = 0$ yields
    \begin{equation*}
        \partial_t \inert \rs + [\rs, \inert \rs] - [\is, \inert \is] = 0, \qquad \partial_t \inert \is + [\rs, \inert \is] + [\is, \inert \rs] = 0.
    \end{equation*}
    Notice now that $\inert \ef  = \iev \ef$ implies both $\inert v = \iev v$ and $\inert w = \iev w$ which gives us $\inert \is = \iev \is$. Hence $[\is, \inert \is] = 0$ and we see that $\rs$ is a real solution of the Euler equations \eqref{euler}. Meanwhile the second equation is precisely the statement that $\is$ satisfies the linearized Euler equations at $\rs$.

    For the stationarity condition, we can compute directly that
    \begin{equation*}
        \partial_t \rs =  (\kev - \aev) \is
    \end{equation*}
    which completes the proof.
\end{proof}

\begin{remark}\label{euler trivial solutions}
    As will be explained in more detail in Section \ref{general framework}, the flow of our solution $\rs$ and the flow of the field $V(t) = \mathrm{Re}\left(e^{i \kev t}\ef\right)$ are related by the equation
    \begin{equation*}
        \gamma_{\rs} = \gamma_{u_0} \circ \gamma_V.
    \end{equation*}
    Notice that it may occur that $\rs$ is a non-stationary solution of \eqref{euler}, while also having that $V$ is a stationary field. In this situation $\rs$ can be interpreted as a trivial solution in a moving frame generated by $u_0$; like coffee being stirred on a moving train. However, we see that
    \begin{equation*}
        \partial_t V = 0 \iff \kev = 0. 
    \end{equation*}
    Accordingly, eigenfields $\ef$ with $\kev = 0$ produce solutions that are trivial in this moving-frame sense, whereas $\kev \neq 0$ yields solutions $\rs$ that are genuinely non-trivial. This corresponds to the importance of vorticity twisting.
\end{remark}

We now turn to the abundance of such solutions. A vector field $X \in \ff(M)$ is said to be a Killing field if its flow $\gamma_X$ acts by isometries. Equivalently, for all $u,v\in \ff(M)$ we have
\begin{equation}\label{Killing condition}
    g(\nabla_uX, v) + g(u, \nabla_v X) = 0.
\end{equation}
As we will demonstrate in the appendix, any such field is automatically a steady solution of \eqref{euler}, cf.~\cite{misiolek1993stability}.

\begin{remark}
    If the vector field $u_0$ in Theorem \ref{simultaneous eigenvalue to euler solution} is in fact Killing and the complex field $\ef$ is such that $\aev = 0$, then the corresponding solution $\rs$ is axi-symmetric with respect to $u_0$.

    Furthermore, for simplicity in the spectral theory, we have restricted to $\ef \in \C \otimes \ff(M)$, the complexified $L^2$-orthogonal complement of harmonic fields within divergence-free fields; hence we we will not have $\alpha = 0$ for non-trivial $\ef$. However we note that there exist exact solutions of \eqref{euler} such that $\inert u$ is steady, but $u$ has an oscillating harmonic component, cf. Yin et al.~\cite{chern2023fluid}. Such dynamics may be interesting in this context.
\end{remark}

\enlargethispage{2\baselineskip}

Our central ingredient is the following.

\begin{proposition}\label{abundance lemma}
    Let $(M,g)$ be a compact Riemannian manifold, possibly with boundary, and $X \in \ff(M)$ be a Killing field such that $\inert X$ is also Killing. Then the operators
    \begin{equation*}
        K_X : \ef \mapsto K_X\ef, \quad \inert : \ef \mapsto \inert \ef, \quad \ad_X: \ef\mapsto -[X,\ef]    
    \end{equation*}
    admit a discrete simultaneous eigenbasis of $\C \otimes\ff(M)$. 
\end{proposition}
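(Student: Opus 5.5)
The strategy is to show that the three operators commute pairwise and that each one preserves the finite-dimensional eigenspaces of $\inert$ given by Lemma~\ref{inertia basis}. On each such eigenspace we then diagonalize the remaining two operators simultaneously, using that they are skew-adjoint, hence normal.

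\textbf{Step 1: $\ad_X$ commutes with $\inert$.} Since $X$ is Killing, its flow $\gamma_X$ acts by isometries, and it preserves $\partial M$ because $X$ is tangent to the boundary. Pushforward by an isometry commutes with $\flat$, $\sharp$, $d$, $\delta$ and (for orientation-preserving isometries, which the flow of a vector field is) with $\star$. Hence $(\gamma_X(s))_*$ commutes with $\inert$ and preserves $\df(M)$, $\hm(M)$ and therefore $\ff(M)$. Differentiating in $s$ gives
\begin{equation*}
[X,\inert v]=\inert[X,v].
\end{equation*}
Moreover $\ad_X$ is $L^2$-skew-adjoint on $\ff(M)$, since $\gamma_X$ is volume-preserving and acts by $L^2$-isometries.

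\textbf{Step 2: $K_X$ commutes with both.} Set $Y=\inert X$. By hypothesis $Y$ is Killing, so Step 1 applies to $Y$ as well: $\ad_Y$ commutes with $\inert$, hence also with $\inert^{-1}$ on $\ff(M)$. Since $K_X v=\inert^{-1}[v,Y]=\inert^{-1}\ad_Y v$, we get $K_X=\ad_Y\inert^{-1}=\inert^{-1}\ad_Y$, which commutes with $\inert$. For the commutation with $\ad_X$, we need $[\ad_X,\ad_Y]=\ad_{[X,Y]}$ to vanish. This holds because $[X,\inert X]=0$: a Killing field is a steady Euler solution (the appendix fact quoted above). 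Then $\ad_X$ commutes with both $\ad_Y$ and $\inert^{-1}$, so it commutes with $K_X$. We also want $K_X$ to be $L^2$-skew-adjoint. Both $\ad_Y$ and $\inert^{-1}$ are self-/skew-adjoint and they commute, so $K_X=\inert^{-1}\ad_Y$ is skew-adjoint. In particular $K_X$ has purely imaginary spectrum, matching the form $-i\kev$ in Theorem~\ref{simultaneous eigenvalue to euler solution}.

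\textbf{Step 3: simultaneous diagonalization.} By Lemma~\ref{inertia basis}, $\C\otimes\ff(M)$ is the $L^2$-orthogonal direct sum of the finite-dimensional eigenspaces $E_\iev$ of $\inert$. By Steps 1--2, $\ad_X$ and $K_X$ preserve each $E_\iev$. On $E_\iev$ they are commuting skew-Hermitian matrices, so they admit a common orthonormal eigenbasis. The union over $\iev$ of these bases is a discrete orthonormal simultaneous eigenbasis of $\C\otimes\ff(M)$, and the eigenvalues have the forms $i\aev$ and $-i\kev$ with $\aev,\kev$ real.

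\textbf{Main obstacle.} I expect the main difficulty to be the careful justification in Step 1 in the presence of boundary. Two points need checking: that $\gamma_X$ preserves $\hm(M)$ and $\ff(M)$, and that it commutes with the domain and boundary conditions of $\inert$ (the Dirichlet condition on stream functions in two dimensions, and tangency in three dimensions). I would first handle this via the Lie derivative identity $\mathcal{L}_X\circ\star=\star\circ\mathcal{L}_X$ and $\mathcal{L}_X g=0$, together with the fact that isometries map boundary components to boundary components. The smoothness needed to differentiate in $s$ follows from the smoothness of the eigenfields.
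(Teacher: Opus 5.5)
Your proof is correct and has the same skeleton as the paper's. Both arguments split $\C\otimes\ff(M)$ into the finite-dimensional eigenspaces $E_k$ of $\inert$. Both use the isometric flows of $X$ and $\inert X$ to get $[\ad_X,\inert]=[\ad_{\inert X},\inert]=0$, and use $[X,\inert X]=0$ to get $[\ad_X,\ad_{\inert X}]=0$. Both then diagonalize commuting skew-adjoint operators on each $E_k$.

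The real difference is how you get skew-adjointness of $K_X$. The paper invokes Corollary~\ref{coadjoint antisymmetry}, which rests on the separate identity of Lemma~\ref{antisymmetry}. That identity is proved with stream functions and Poisson brackets in two dimensions and with $[v,u]=\curl(u\times v)$ in three. It needs no Killing hypothesis, so it shows $K_{u_0}$ is skew-adjoint, with purely imaginary spectrum, for every $u_0\in\ff(M)$.

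You instead use the hypothesis that $\inert X$ is Killing, so $K_X=\inert^{-1}\ad_{\inert X}$ is a commuting product of a self-adjoint and a skew-adjoint operator. In fact $K_X$ restricted to $E_k$ is just $\iev_k^{-1}\ad_{\inert X}$. The proposition therefore reduces to simultaneously diagonalizing the generators of two commuting unitary groups, and Lemma~\ref{antisymmetry} is not needed here. So the paper's route is more general, and yours is more elementary under the stated hypotheses.

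Your skew-adjointness of $\ad_X$, obtained by differentiating the $L^2$-isometric action of $\gamma_X$, is equivalent to the paper's integration by parts.

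One small simplification is available: setting $v=X$ in your Step 1 identity $[X,\inert v]=\inert[X,v]$ gives $[X,\inert X]=0$ directly in both dimensions. You then need not cite the appendix, whose derivation of this fact is three-dimensional.
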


In particular, if both $X$ and $\inert X$ are Killing, then there exists an infinite-dimensional family of $\ef \in \C \otimes \ff(M)$ satisfying the hypotheses of Theorem~\ref{simultaneous eigenvalue to euler solution}, with $u_0=X$, and hence an infinite collection of exact, non-stationary solutions of \eqref{euler}. We present the proof in Appendix \ref{geometric lemmas}.

\begin{theorem}\label{loads of solutions}
    Let $(M,g)$ be a compact Riemannian manifold, possibly with boundary and denote by $\ff(M)$ those divergence-free vector fields which are tangent to the boundary of $M$ and $L^2$-orthogonal to harmonic fields.
    
    If there exists a Killing field $X \in \ff(M)$ such that $\inert X$ is also Killing, where
    \begin{equation*}
        \inert = \begin{cases}
        \Delta & \dim(M)=2, \\
        \curl & \dim(M)=3,
    \end{cases}
    \end{equation*}
    then there exists infinitely many $v,w \in \ff(M)$ with constants $\kev, \iev$ and $\aev$ such that
    \begin{align*}
        \inert^{-1}[v,\inert X] &= \kev w ,
        &\quad \inert v &= \iev v, 
        &\quad [X, v] &= -\aev w, \\
        \inert^{-1}[w,\inert X] &= - \kev v, 
        &\quad \inert w &= \iev w,
        &\quad [X, w] &= \aev v.
    \end{align*}
    Consequently, for all $\rho \geq 0$ and $\sigma\in \R$, we have a real solution of the Euler equations \eqref{euler} given by
    \begin{equation}
        \rs(t) = X + \rho\cos(\sigma + \freq t)v - \rho\sin(\sigma + \freq t)w
    \end{equation}
    where $\freq = \kev - \aev$.
\end{theorem}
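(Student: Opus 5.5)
The plan is to combine Proposition~\ref{abundance lemma} with Theorem~\ref{simultaneous eigenvalue to euler solution}, taking $u_0 = X$. First I check that $X$ is a steady solution. A Killing field satisfies $[X,\inert X]=0$: its flow acts by isometries, and the operator $\inert$ (Hodge Laplacian or curl) commutes with isometries. Hence $\inert X$ is invariant under the flow of $X$, i.e.\ $[X,\inert X] = 0$. Alternatively, I can simply invoke the statement, deferred to the appendix, that Killing fields are steady Euler flows.

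Next, Proposition~\ref{abundance lemma} supplies a discrete simultaneous eigenbasis $\{\ef_j\}$ of $\C\otimes\ff(M)$ for $K_X$, $\inert$ and $\ad_X$. Since $\ff(M)$ is infinite-dimensional, this basis is infinite. I then need to see that the eigenvalues have the forms required by Theorem~\ref{simultaneous eigenvalue to euler solution}.
\begin{itemize}
\item $\inert$ is real self-adjoint (Lemma~\ref{inertia basis}), so its eigenvalues are real.
\item $\ad_X$ is skew-adjoint in $L^2$, because the flow of $X$ is volume-preserving and isometric. Its eigenvalues are therefore purely imaginary, so $[X,\ef]=i\aev\ef$ with $\aev$ real.
\item $K_X$ is skew-adjoint in $L^2$ when $\inert X$ is Killing. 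Indeed, $\inert K_X v = [v,\inert X] = \ad_{\inert X} v$ is $L^2$-skew, and $\inert$ is self-adjoint. On an $\inert$-eigenspace, which is preserved since the operators commute, $K_X = \iev^{-1}\ad_{\inert X}$, and so $K_X$ is skew there. Hence its eigenvalues are $-i\kev$ with $\kev$ real.
\end{itemize}
I expect the only delicate point to be this skewness of $K_X$ together with the commutation needed for simultaneous diagonalization. Since the proposition already grants the simultaneous eigenbasis, the remaining work is just the eigenvalue-reality argument above.

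Finally, I write $\ef = v + iw$ and separate real and imaginary parts of the three eigen-equations:
\begin{itemize}
\item $\inert\ef=\iev\ef$ gives $\inert v=\iev v$ and $\inert w=\iev w$;
\item $[X,v]+i[X,w] = i\aev(v+iw)$ gives $[X,v] = -\aev w$ and $[X,w]=\aev v$;
\item $K_X\ef = -i\kev\ef$ gives $K_Xv = \kev w$ and $K_Xw = -\kev v$.
\end{itemize}
These are exactly the six displayed relations. Distinct basis elements give linearly independent $\ef$, hence infinitely many pairs $(v,w)$; both are nonzero combinations, since otherwise $\ef=0$.

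Applying Theorem~\ref{simultaneous eigenvalue to euler solution} and the subsequent remark, with $\rho e^{i\sigma}\ef$ in place of $\ef$, yields the real solution $\rs(t) = X+\rho\cos(\sigma+\freq t)v-\rho\sin(\sigma+\freq t)w$ with $\freq=\kev-\aev$.
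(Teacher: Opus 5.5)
Your proposal is correct and is essentially the paper's argument. You take the simultaneous eigenbasis from Proposition~\ref{abundance lemma}, split each $\ef=v+iw$ into real and imaginary parts, and apply Theorem~\ref{simultaneous eigenvalue to euler solution} with $u_0=X$. You also spell out two things the paper leaves implicit in its proof of the proposition: that $X$ is steady, and that $\kev,\iev,\aev$ are real (via the skewness of $\ad_X$ and $K_X$).

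One harmless slip: $v$ and $w$ need not both be nonzero, since a real eigenfield $\ef=v$ is possible and forces $\kev=\aev=0$. The statement only needs $\ef\neq 0$, and distinct basis elements already give distinct pairs $(v,w)$.
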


\begin{proof}
    From Proposition \ref{abundance lemma}, we have an entire basis of complex fields $\ef \in \C \otimes \ff$ satisfying the assumptions of Theorem \ref{simultaneous eigenvalue to euler solution} with $u_0 = X$. Decomposing each as $\ef = v + iw$ with $v,w \in \ff$ and recalling the definition of the operator $K_X$ from \eqref{coadjoint operator} completes the proof. \\
\end{proof}

\begin{remark}
    Note that locally we can construct polar coordinates such that $X=\partial_\theta$ which in turn induces a traveling wave type structure on our solutions in the corresponding chart; that is, the dependence on $\theta$ is in terms of $(\theta-t)$. It is conceivable however that no such global coordinate $\theta$ exists, depending on whether the orbits of $X$ are closed and of the same length.
\end{remark} 

In Sections~\ref{2D} and~\ref{3D} we classify, in two and three dimensions respectively, the manifolds admitting Killing fields $X$ such that $\inert X$ is also Killing. In each case we provide explicit examples of the resulting solutions.

\section{Two-Dimensional Fluids}\label{2D}
For this section we have $\inert=\Delta$, the positive-definite Hodge Laplacian on vector fields, cf. \eqref{hodge laplacian}. We begin by showing, via the Bochner technique, that any surface satisfying the conditions of Theorem \ref{simultaneous eigenvalue to euler solution} must have constant curvature. The proof can also be carried out in local coordinates, but the technique of Bochner yields a global result more readily. We refer to Petersen~\cite{petersen2006riemannian} for the notation.

\begin{theorem}\label{surfacetheorem}
Let $(M,g)$ be a two-dimensional Riemannian manifold, possibly with boundary $\partial M$. If $M$ admits a Killing field $X$ whose Hodge Laplacian $\Delta X$ is also Killing, then the sectional curvature of $M$ is constant.
\end{theorem}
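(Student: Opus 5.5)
The plan is to compute $\Delta X$ explicitly for a Killing field on a surface, and then read off constancy of the curvature from the requirement that $\Delta X$ is again Killing. Throughout I will assume $M$ is connected and $X\not\equiv 0$; otherwise the statement is vacuous or must be applied componentwise.

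\textbf{Step 1: Bochner identity.} I use the Weitzenb\"ock formula for $1$-forms, identified with vector fields via $\flat,\sharp$:
\begin{equation*}
\Delta X = \nabla^*\nabla X + \mathrm{Ric}(X),
\end{equation*}
where $\Delta$ is the positive Hodge Laplacian \eqref{hodge laplacian}. For a Killing field one has the standard second-order identity $\nabla^2_{u,v}X = -R(X,u)v$. Tracing over an orthonormal frame gives $\nabla^*\nabla X = -\sum_i \nabla^2_{e_i,e_i}X = \mathrm{Ric}(X)$. Hence $\Delta X = 2\,\mathrm{Ric}(X)$. In two dimensions $\mathrm{Ric} = \curv\, g$, with $\curv$ the Gaussian curvature, so
\begin{equation*}
\Delta X = 2\curv X .
\end{equation*}
This step is purely pointwise, so the boundary plays no role.

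\textbf{Step 2: When $fX$ is Killing.} Write $Y = fX$ with $f = 2\curv$. Since $\mathcal{L}_X g = 0$, the Leibniz rule gives
\begin{equation*}
\mathcal{L}_{fX} g = f\,\mathcal{L}_X g + df\otimes X^\flat + X^\flat\otimes df = df\otimes X^\flat + X^\flat \otimes df .
\end{equation*}
If $Y$ is Killing, this symmetric tensor vanishes. At a point $p$ with $X(p)\neq 0$, evaluate it on $(X,X)$ to get $2\,df(X)\,|X|^2 = 0$, so $df(X)=0$. Then evaluate it on $(X,Z)$ with $Z\perp X$ to get $df(Z)\,|X|^2 = 0$. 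Together these give $df(p) = 0$. Therefore $d\curv = 0$ on the open set $\{X\neq 0\}$.

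\textbf{Step 3: Zeros of $X$ (the main obstacle).} I must show that $\{X\neq 0\}$ is dense, so that continuity of $d\curv$ forces $d\curv\equiv 0$. Connectedness then makes $\curv$ constant.

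A Killing field is determined on a connected manifold by its $1$-jet $(X(p),\nabla X(p))$ at any single point, since the identity $\nabla^2 X = -R(X,\cdot)\cdot$ gives a linear ODE along curves. Now suppose $X(p)=0$. Then $\nabla X(p)$ is skew and must be nonzero, as otherwise $X\equiv 0$. In dimension two a nonzero skew endomorphism is invertible, so $X$ has a nondegenerate, hence isolated, zero at $p$.

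Boundary zeros require a separate check. Near such a point I would use that $X$ is tangent to $\partial M$ and run the same nondegeneracy argument in a collar or in the double. Alternatively, one can simply note that the interior zeros are isolated and that $\partial M$ has empty interior, so density of $\{X\neq0\}$ still holds.

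Once density is established, $\curv$ is constant, which proves Theorem~\ref{surfacetheorem}. The converse also holds: on a surface of constant curvature, $\Delta X = 2\curv X$ is trivially Killing.
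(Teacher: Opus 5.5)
Your proposal is correct and follows essentially the same route as the paper: the Weitzenb\"ock formula together with the Killing identity $\nabla^2_{v,w}X=-R_{X,v}w$ gives $\Delta X = 2\curv X$, and the vanishing of $\mathcal{L}_{\curv X}g = d\curv\otimes X^\flat + X^\flat\otimes d\curv$ then forces $d\curv=0$ wherever $X\neq 0$. The paper gets this in one step by evaluating at $(\nabla\curv, X)$, which yields $\abs{\nabla\curv}_g^2\abs{X}_g^2 + X(\curv)^2=0$, whereas you test on $(X,X)$ and $(X,Z)$; your Step~3, along with your explicit assumptions that $M$ is connected and $X\not\equiv 0$, supplies the density-of-$\{X\neq0\}$ argument that the paper leaves implicit (and observing that a Killing field vanishing on an open set vanishes identically already suffices there).
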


\begin{proof}
  The Weitzenbock formula says on any Riemannian manifold that
        \begin{equation*}
            \Delta(v^\flat) = \nabla^* \nabla v^\flat + \mathrm{Ric}(v^\flat), \quad \text{for any } v \in \mathfrak{X}(M).
        \end{equation*}
        For a Killing field $X$ we have\footnote{See \cite[Proposition 8.1.3.]{petersen2006riemannian}.}
        \begin{equation*}
            \nabla^2_{v,w} X = - R_{X,v}w, \quad \text{for all } v, w \in \mathfrak{X}(M)
        \end{equation*} 
        and, as a consequence,
        \begin{equation*}
            \Delta X = 2\mathrm{Ric}(X).
        \end{equation*} 
        If now $M$ is a surface, then $\mathrm{Ric} = \kappa g$, where $\kappa$ is the sectional curvature. So our requirement becomes that $\kappa X$ is also Killing. Computing
        \begin{align*}
        \mathcal{L}_{\kappa X}g(v,w) &= g(\nabla_v(\kappa X), w) + g(v, \nabla_{w}(\kappa X)) \\
            &= \kappa \big( g(\nabla_v X, w) + g(v, \nabla_w X) \big) + g(v(\kappa)X, w) + g(v, w(\kappa)X) \\
            & = g(v(\kappa)X, w) + g(v, w(\kappa)X).
        \end{align*}
        This must vanish for any $v,w$. Taking $v=\nabla \kappa$ and $w=X$, we get
        \begin{equation*}
            \abs{\nabla \kappa}_g^2 \abs{X}_g^2 + \abs{X(\kappa)}_g^2 = 0.
        \end{equation*}
        Hence, $\kappa$ is constant.
\end{proof}
Note that when the fluid domain is a compact surface of constant sectional curvature $\kappa$, we can express the metric locally in polar coordinates
    \begin{equation*}\label{constant curvature metric}
        g = dr^2 + s_{\kappa}(r)d\theta^2.
    \end{equation*}
where
\begin{equation}\label{generalizedtrig}
    \scurv(r) = \begin{cases} 
       \sin{r} & \curv=1, \\
       r & \curv=0, \\
       \sinh{r} & \curv=-1,
    \end{cases}
\end{equation}
and $X = \partial_\theta$ is Killing with $\Delta X = 2 \kappa X$.

The following theorem outlines the general scheme for applying Theorem \eqref{simultaneous eigenvalue to euler solution} in this setting.

\begin{theorem}\label{two-dimensional examples}
    Let $(M,g)$ be a surface of constant sectional curvature $\kappa$. The eigenfunctions of the Laplacian are of the form
    \begin{equation*}
        \psi_{n,m}(r,\theta)=e^{in\theta}F_{n,m}(r), \qquad \Delta\psi_{n,m}=\iev_{n,m}\psi_{n,m}, \quad n \in\Z, m \in \Z_{\geq0}
    \end{equation*}
    and, defining $\freq_{n,m} = n\left(\frac{2\kappa}{\iev_{n,m}}-1\right)$, the vector field
    \begin{equation}\label{constant curvature surface solutions}
        \rs(t,r,\theta) = \partial_\theta + \frac{F_{n,m}^{\prime}(r)}{s_\kappa(r)}\cos\left(n\theta + \freq_{n,m}t\right)\partial_\theta + \frac{nF_{n,m}(r)}{s_\kappa(r)}\sin\left(n\theta + \freq_{n,m}t\right)\partial_r
    \end{equation}
    solves the Euler equations \eqref{euler}.

    Furthermore $\rs$ is stationary if and only if $\freq_{n,m}=0$.
\end{theorem}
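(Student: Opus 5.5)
The plan is to apply Theorem~\ref{simultaneous eigenvalue to euler solution} with $u_0 = X = \partial_\theta$ and $\ef = \symp \psi_{n,m}$, and to read off the constants $\iev,\aev,\kev$ explicitly. I take $M$ to be rotationally symmetric about the origin of the polar chart: a disk or annulus in the flat or hyperbolic case, or the sphere or a spherical cap or annulus. I also read the metric as $dr^2 + s_\kappa(r)^2 d\theta^2$.

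\textbf{Step 1: separation of variables.} Because $\partial_\theta$ is Killing, the scalar Laplacian with Dirichlet conditions commutes with $\partial_\theta$. So each eigenspace of $\Delta$ decomposes under the action of $-i\partial_\theta$ into Fourier modes $e^{in\theta}$. Substituting $\psi = e^{in\theta}F(r)$ reduces $\Delta\psi = \iev\psi$ to the Sturm--Liouville problem
\[
-\frac{1}{s_\kappa}\big(s_\kappa F'\big)' + \frac{n^2}{s_\kappa^2}F = \iev F .
\]
The Dirichlet boundary conditions, together with regularity at $r=0$ or at the poles, index the solutions by $m\in\Z_{\geq0}$. By the proof of Lemma~\ref{inertia basis}, $\symp\psi_{n,m}$ then lies in $\C\otimes\ff(M)$ and satisfies $\Delta\,\symp\psi_{n,m} = \iev_{n,m}\symp\psi_{n,m}$. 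This gives the second hypothesis of Theorem~\ref{simultaneous eigenvalue to euler solution} with $\iev=\iev_{n,m}$.

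\textbf{Step 2: the bracket and coadjoint eigenvalues.} Since $X$ is Killing and preserves the area form, it commutes with $\symp$, so
\[
[X,\symp\psi] = \symp(X\psi) = \symp(\partial_\theta\psi) = in\,\symp\psi .
\]
This gives $\aev = n$. From the proof of Theorem~\ref{surfacetheorem} we have $\Delta X = 2\kappa X$. Hence
\[
K_X\ef = \Delta^{-1}[\ef, 2\kappa X] = -2\kappa\,\Delta^{-1}[X,\ef] = -\frac{2\kappa i n}{\iev_{n,m}}\,\ef ,
\]
so $\kev = 2\kappa n/\iev_{n,m}$. Then $\freq = \kev-\aev = n\big(\tfrac{2\kappa}{\iev_{n,m}}-1\big)$, matching $\freq_{n,m}$. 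Theorem~\ref{simultaneous eigenvalue to euler solution} therefore gives the real solution $\rs = X + \mathrm{Re}\big(e^{i\freq t}\symp\psi_{n,m}\big)$.

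\textbf{Step 3: explicit form and stationarity.} In the orthogonal coframe $dr,\ s_\kappa d\theta$, with a fixed orientation convention, one has $\symp\psi = \frac{1}{s_\kappa}\big(\psi_r\,\partial_\theta - \psi_\theta\,\partial_r\big)$. Inserting $\psi_r = F'e^{in\theta}$ and $\psi_\theta = inFe^{in\theta}$ and taking the real part of the factor $e^{i(n\theta+\freq t)}$ produces the terms $\frac{F'}{s_\kappa}\cos(n\theta+\freq t)\,\partial_\theta$ and $\frac{nF}{s_\kappa}\sin(n\theta+\freq t)\,\partial_r$. This is exactly \eqref{constant curvature surface solutions}. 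The stationarity criterion is the final clause of Theorem~\ref{simultaneous eigenvalue to euler solution}, since $\kev=\aev$ is equivalent to $\freq_{n,m}=0$.

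\textbf{Main obstacle.} I expect the only delicate points to be bookkeeping rather than substance. First, the sign and orientation conventions for $\symp$ must be checked so that $\cos$ and $\sin$ land on the stated components. Second, one must confirm that $X=\partial_\theta$ itself lies in $\ff(M)$, so that it is not harmonic. This holds because $X = \symp\phi$ with $\phi$ a function of $r$ alone, for example $\phi = -\int s_\kappa\,dr$, which can be normalized to vanish on the boundary and is then not $L^2$-orthogonal to the rotational modes. The case $\kappa=0$ needs separate care: there $\Delta X = 0$ pointwise, and one should argue via the stream-function picture that the formula still holds with $\kev=0$.
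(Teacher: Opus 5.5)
Your proposal is correct and is essentially the paper's proof: take $u_0=\partial_\theta$ and $\ef=\symp\psi_{n,m}$ in Theorem~\ref{simultaneous eigenvalue to euler solution}, read off $\aev=n$ from $[\partial_\theta,\symp\psi]=\symp(\partial_\theta\psi)$ and $\kev=2\kappa n/\iev_{n,m}$ from $\Delta\partial_\theta=2\kappa\partial_\theta$, then take the real part (for $\kappa=0$ nothing special is needed, since $\Delta\partial_\theta=0$ gives $K_{\partial_\theta}\ef=0$ directly). The one slip is your side remark about annuli: there $\partial_\theta\notin\ff(M)$, because a radial stream function with $\phi'=s_\kappa$ (note the sign in your convention) is strictly monotone and cannot vanish on both boundary circles, or equivalently because $\partial_\theta$ pairs nontrivially in $L^2$ with the harmonic field $s_\kappa^{-2}\partial_\theta$; so restrict, as the paper implicitly does, to disks, caps and the sphere, where $\hm(M)=0$ and $\partial_\theta\in\ff(M)$ is automatic.
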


\begin{proof}
    Taking the skew-gradient of the Laplace-eigenfunctions, we acquire divergence-free fields
    \begin{equation*}
    {\ef}_{n,m} = \symp\psi_{n,m} = \frac{1}{s_{\kappa}(r)}\big(-\partial_\theta \psi_{n,m} \partial_r + \partial_r \psi_{n,m}\partial_\theta\big) = \frac{e^{in\theta}}{s_{\kappa}(r)} \big(-inF_{n,m}(r)\partial_r + F_{n,m}^{\prime}(r)\partial_\theta\big)
    \end{equation*}
    satisfying
    \begin{equation*}
        \Delta \ef_{n,m} = \iev_{n,m} \ef_{n,m}.
    \end{equation*}
    These are eigenfields of our adjoint 
    \begin{equation*}
        [\partial_\theta, {\ef}_{n,m}] = \symp \partial_\theta\psi_{n,m} = in {\ef}_{n,m}
    \end{equation*}
    and, from \eqref{coadjoint operator}, our coadjoint operator
    \begin{equation*}
        K_{ \partial_\theta}{\ef}_{n,m} = \Delta^{-1}[{\ef}_{n,m}, \Delta \partial_{\theta}] = -i\frac{2 \kappa n}{\iev_{n,m}}{\ef}_{n,m}.
    \end{equation*}
    Hence, in the context of Theorem \ref{simultaneous eigenvalue to euler solution}, we have $\kev_{n,m} = \frac{2 \kappa n}{\alpha_{n,m}}$ and $\aev_{n,m} = n$.

    Letting $\freq_{n,m} = \kev_{n,m} - \aev_{n,m} = n\left(\frac{2\kappa}{\iev_{n,m}}-1\right)$, the corresponding complex-valued solution of \eqref{euler} is given by
    \begin{equation*}
        \cs(t) = \partial_\theta + e^{i\freq_{n,m}t}\ef_{n,m}.
    \end{equation*}
    Taking the real part yields \eqref{constant curvature surface solutions}.
\end{proof}
We now consider some canonical compact constant curvature surfaces.
\begin{example}[Kelvin Waves on the Flat Disk]
Let $(D^2,\gflat)$ be the flat two-dimensional unit disk and consider the Killing field $X=\partial_\theta$ which generates rotation around the origin. Here we have $\kappa=0$, and hence $\Delta \partial_\theta = 0$. Consequently, the only admissible value of $\kev$ in Theorem~\ref{simultaneous eigenvalue to euler solution} is $\kev=0$. Hence, the solutions we generate in this setting are trivial in the sense outlined in Remark \ref{euler trivial solutions}.

On the flat unit disk, $s_\kappa(r)=r$ and the eigenfunctions of the Laplacian are given by
\begin{equation*}
\psi_{n,m}(r,\theta)=J_n(\beta_{n,m}r)e^{in\theta}, \qquad \Delta\psi_{n,m}=\beta_{n,m}^2\psi_{n,m}, \quad n\in\Z, m \in \Z_{\geq1}.
\end{equation*}
where $J_n$ is the $n^{\text{th}}$ Bessel function and $\beta_{n,m}$ its $m^{\text{th}}$ zero. Letting $\freq_{n,m} = -n$, the corresponding solutions \eqref{constant curvature surface solutions} are precisely the classical Kelvin waves on the flat unit disk.

In Section \ref{3D} we present a three-dimensional analogue of the above where the helical symmetry creates a twisting effect which generates nontrivial flows, cf. Example \ref{cequalszero}.
\end{example}

On the round sphere, the presence of positive curvature alters the action of the Laplacian on Killing fields, leading to nontrivial eigenvalues $\kev$.

\enlargethispage{3\baselineskip}

\begin{example}[Rossby-Haurwitz Waves on the Round Two-Sphere]\label{2D RH} Let $(S^2,\ground)$ be the round two-sphere and consider the Killing field $X=\partial_\theta$ which generates equatorial rotation. Here we have $\Delta \partial_\theta = 2\partial_\theta$ and, replacing $r$ with the more natural $\phi \in [0,\pi]$ denoting the latitudinal parameter, $s_\kappa(\phi) = \sin(\phi)$. The spherical harmonics are given by
\begin{equation*}
\psi_{n,m}(\theta, \phi)=P_{n,m}(\cos\phi)e^{in\theta}, \qquad \Delta\psi_{n,m}=m(m+1)\psi_{n,m}, \quad m\in\Z_{\ge0}, \quad n\in\Z, \quad \abs{n}\le m
,
\end{equation*}
where $P_{n,m}$ is the associated Legendre polynomial with the standard normalization. Letting $\freq_{n,m} = n\left(\frac{2}{m(m+1)}-1\right)$, the corresponding solutions \eqref{constant curvature surface solutions} are precisely the Rossby-Haurwitz waves on the round two-sphere. Furthermore, $\rs$ is stationary if and only if $\frac{2n}{m(m+1)} = n$, that is, $n=0$ or $m = 1$.

The simplest non-stationary example is then $n=1$ and $m=2$. Here, we get that $P_{1,2}(\cos{\phi})$ is a multiple of $\sin{\phi} \cos{\phi}$ and $\freq_{1,2} = -\tfrac{2}{3}$. The corresponding solution from \eqref{constant curvature surface solutions} is given by
\begin{equation}\label{explicit2drossby}
\rs(t,\phi,\theta) = \partial_\theta + \frac{\cos(2\phi)}{\sin (\phi)}\cos\left(\theta - \frac{2}{3}t\right)\partial_\theta + \cos(\phi)\sin\left(\theta - \frac{2}{3}t\right)\partial_\phi
\end{equation}
which can be expressed in Cartesian coordinates as the restriction to the sphere of
\begin{equation}\label{explicit 2D RHW}
    \rs(t) = e_1 - \left(xe_1 + ze_2\right)\cos\left(\frac{2t}{3}\right) - \left(ye_1 + ze_3\right)\sin\left(\frac{2t}{3}\right)
\end{equation}
where
\begin{equation*}
    e_1 = -y \partial_x + x \partial_y, \quad e_2 = -z\partial_y + y\partial_z, \quad e_3 = -x\partial_z +z\partial_x.
\end{equation*}
In Section \ref{3D} we present a three-dimensional analogue of these solutions on $S^3$, cf. \eqref{explicit 3D RHW}.
\end{example}
Finally, we consider the negative curvature case. Note that by Bochner’s theorem (cf. Petersen~\cite[Chapter 7]{petersen2006riemannian}), a closed manifold with negative Ricci curvature admits no nontrivial Killing fields. Hence, our examples in this setting are restricted to surfaces with boundary. We will focus on the hyperbolic analogue of the flat disk. In contrast to both the flat and spherical cases, negative curvature reverses the sign of the Laplacian on rotational Killing fields, yielding a distinct family of Kelvin-type waves on compact domains in hyperbolic space. 

\begin{example}[Kelvin Waves on a Compact Disk in Hyperbolic Space]
Let $(\mathbb{H}^2,\ghyp)$ be standard hyperbolic space and consider a closed geodesic ball $D^2$ given in polar coordinates by $\{(r,\theta) \ \big\vert \ 0 \leq r \leq 1\}$ with $\ghyp=dr^2 + \sinh^2r d\theta^2$. The vector field $X=\partial_\theta$ is then Killing and $\Delta \partial_\theta = - 2\partial_\theta$. The eigenfunctions of the Laplacian on $(D^2, \ghyp)$ are given by
\begin{equation*} 
\psi_{n,m}(r, \theta)=\assleg_{n,m}(\cosh{r})
e^{in\theta},  \qquad
\Delta\psi_{n,m} =\iev_{n,m}\psi_{n,m}, \quad \text{with } \iev_{n,m}= \tfrac{1}{4} + \beta_{n,m}^2, \quad n\in\Z, m \in \Z_{\geq1},
\end{equation*}
where $\assleg_{n,m}:=Q^{\abs{n}}_{-\tfrac{1}{2}+i\beta_{n,m}}$ is the associated Legendre function\footnote{For details on these eigenfunctions, see \cite{terras2012harmonic}.} with $\beta_{n,m}$ such that $\assleg_{n,m}(\cosh{1})=0$ and the branch cut chosen so that the function is real on $[1,\infty)$. Letting $\freq_{n,m} = n\left(\frac{2}{\iev_{n,m}}-1\right)$, the corresponding solutions \eqref{constant curvature surface solutions} are a hyperbolic analogue of Rossby-Haurwitz waves.

Lastly, $\rs$ is stationary if and only if $\frac{2n}{\iev_{n,m}} = n$, that is, $n=0$ or $\iev_{n,m} = 2$. \\
\end{example}

\section{Three-Dimensional Fluids}\label{3D}
We now consider three-dimensional manifolds, for which the inertia operator is $\inert=\curl$, cf. \eqref{curl}. To construct examples as in Theorem~\ref{simultaneous eigenvalue to euler solution}, we must understand when a manifold admits a Killing field $X$ whose vorticity $\curl X$ is also Killing. As we will see, the analysis naturally splits into two cases depending on whether $\abs{X}_g$ is constant. Independently of the Killing assumption, Arnold’s structure theorem for steady Euler flows \cite[Section II.1]{arnold2021topological} implies that away from singular sets either $\curl X = fX$ for some function $f$ or the flow lines lie on invariant two-dimensional tori. When both $X$ and $\curl X$ are Killing, this dichotomy becomes geometrically rigid. The following lemma provides the key rigidity input.

\begin{proposition}\label{beltramilemma}
    Let $(M,g)$ be a three-dimensional compact connected Riemannian manifold, possibly with boundary. If $X$ and $\curl{X}$ are both Killing fields, then $g(X,\curl X)$ is constant. Consequently, if $\curl X = fX$, then both $\abs{X}_g$ and $f$ are constant.
\end{proposition}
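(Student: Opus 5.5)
The plan is to show $\nabla\big(g(X,\curl X)\big)=0$ pointwise, using two standard facts about Killing fields in dimension three.

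Write $Y=\curl X$ and $h=g(X,Y)$.

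\textbf{Step 1: $\nabla X$ in terms of $Y$.} For a Killing field $X$, the endomorphism $u\mapsto\nabla_uX$ is skew-adjoint by \eqref{Killing condition}. Hence $dX^\flat(u,v)=2g(\nabla_uX,v)$. In three dimensions a skew endomorphism is a cross product with a vector, and comparing with \eqref{curl} identifies that vector as $\tfrac12\curl X$. Up to an orientation sign, which will not matter, this gives
\begin{equation*}
\nabla_uX=\tfrac12\, Y\times u .
\end{equation*}
In particular $\nabla_YX=0$.

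\textbf{Step 2: $X$ and $Y$ commute.} The flow of $X$ consists of orientation-preserving isometries, being connected to the identity. These commute with $\flat$, $\sharp$, $d$ and $\star$, so $\mathcal{L}_X$ commutes with $\curl$. Therefore
\begin{equation*}
[X,Y]=\curl[X,X]=0,
\end{equation*}
and so $\nabla_XY=\nabla_YX=0$.

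\textbf{Step 3: differentiate $h$.} For any $v$,
\begin{equation*}
v(h)=g(\nabla_vX,Y)+g(X,\nabla_vY)=-g(\nabla_YX,v)-g(\nabla_XY,v),
\end{equation*}
using the Killing property of both $X$ and $Y$. By Steps 1 and 2 both terms vanish. Hence $dh=0$, and $h$ is constant since $M$ is connected. The only delicate point is Step 1: one has to get the identification of the skew part of $\nabla X$ with $\tfrac12\curl X$ right. Fortunately only $\nabla_YX=0$ is used, and that is insensitive to the constant and the sign.

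\textbf{Step 4: the case $\curl X=fX$.} Here $h=f\abs{X}_g^2$. Since $fX=Y$ is Killing, the computation in the proof of Theorem~\ref{surfacetheorem} applies verbatim: $\mathcal{L}_{fX}g(v,w)=v(f)g(X,w)+w(f)g(X,v)$. Taking $v=\nabla f$ and $w=X$ gives
\begin{equation*}
\abs{\nabla f}_g^2\abs{X}_g^2+X(f)^2=0,
\end{equation*}
so $\nabla f=0$ wherever $X\neq0$.

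A nontrivial Killing field cannot vanish on an open set, since it is determined by its $1$-jet at a point. So $\{X\neq0\}$ is dense, and by continuity $f$ is constant (assuming $f$ smooth, or at least continuous).

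If $f\neq0$, then $\abs{X}_g^2=h/f$ is constant. If $f=0$, then $Y=0$, and Step 1 gives $\nabla X=0$. Thus $X$ is parallel and $\abs{X}_g$ is again constant.
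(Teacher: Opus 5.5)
Your argument is correct. Steps 1 and 3 are essentially the paper's: it also obtains $\nabla_{\curl X}X=0$ by combining the Killing condition with $dX^\flat=\iota_{\curl X}\mu$, and then differentiates $g(X,\curl X)$ exactly as you do.

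The routes differ in Steps 2 and 4. For $[X,\curl X]=0$ you use naturality: $\mathcal{L}_X$ commutes with $\curl$ because $\mathcal{L}_Xg=0$ and $\diver X=0$. The paper instead first proves $X\times\curl X=\nabla\abs{X}_g^2$, which follows immediately from your Step 1 formula since $u(\abs{X}_g^2)=2g(\nabla_uX,X)$. It then takes the curl of both sides, using $\curl(A\times B)=\nabla_BA-\nabla_AB$ for divergence-free $A,B$. Your Step 2 is cleaner and needs no vector-calculus identity.

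The paper's identity pays off in Step 4. There it gives $\nabla\abs{X}_g^2=f\,X\times X=0$ at once. Hence $\abs{X}_g$ is constant, $X$ is nowhere zero unless $X\equiv0$, and $f=g(X,\curl X)/\abs{X}_g^2$ is constant. This needs no regularity of $f$, no density argument and no case split. The identity and the nonvanishing of $X$ are also reused in Section~\ref{3D}.

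Your Step 4 recycles the computation from Theorem~\ref{surfacetheorem} together with the fact that a nontrivial Killing field cannot vanish on an open set. This is valid for $f\in C^1$, since then $\nabla f$ is continuous and vanishes on a dense set.

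Your parenthetical ``or at least continuous'' is not justified as written, however. For merely continuous $f$ you only learn that $f$ is locally constant on the dense open set $\{X\neq0\}$. To conclude, you also need this set to be connected. That is true, because the zeros of a nontrivial Killing field in dimension three have codimension two, but you did not argue it. Since you already had $\nabla_uX=\tfrac12\curl X\times u$ from Step 1, the one-line shortcut via $\nabla\abs{X}_g^2$ was available to you.
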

While the proof of the above can be carried out locally in coordinates, a global formulation requires intrinsic, coordinate-free computations involving the Levi-Civita connection. We defer this to the Appendix.

Hence, as mentioned above, Proposition~\ref{beltramilemma} reduces our analysis to two geometrically distinct situations.

If $\curl X$ is everywhere parallel to $X$, then $\curl X=\alpha X$ for some constant $\alpha$, and $X$ has constant length. This is the strong Beltrami case, studied in Section~\ref{circle}, where the manifold is a circle bundle over a Riemann surface with arbitrary geometry.

If $\curl X$ is not parallel to $X$, then the flow falls into the toroidal case described by Arnold’s theorem. In the compact non-singular setting this leads to a torus-bundle structure, and the additional requirement that $\curl X$ be Killing imposes strong restrictions on the metric. This case is analyzed in Section~\ref{torus}.

\subsection{Circle Bundles over a Surface}\label{circle}
Here we consider the situation where $X$ is a Killing field on a compact orientable three-dimensional manifold $(M,g)$ for which $\curl{X}$ is also a Killing field that is everywhere parallel to $X$. Proposition \ref{beltramilemma} shows that in fact $X$ must be nowhere zero and of constant length. It is possible that such a Killing field could be irregular, such as $X = a\,\partial_x+b\,\partial_y+c\,\partial_z$ on the flat torus $\mathbb{T}^3$, with $\inert X=0$; if the coefficient ratios are irrational, the orbits of $X$ can be dense. The structure theory is simpler if we assume $X$ is regular, with closed orbits all of the same length, so that we have a circle bundle over a compact orientable surface $\Sigma$ with some integer Euler class.

If the Euler class is zero, the bundle is trivial, and $M= \Sigma\times S^1$ with the product Riemannian metric. In this case $X$ is a parallel field (since it is both Killing and harmonic), so that $\lambda=0$ in Theorem \ref{simultaneous eigenvalue to euler solution} and the time dependence is essentially trivial (the action by translation in the $S^1$ variable which does not twist anything, as in Remark \ref{euler trivial solutions}). To get genuinely nontrivial time dependence, we need nonzero Euler class.

We will first describe the general classification of circle bundles over compact manifolds admitting Killing fields that are also curl eigenfields in Section \ref{circle}. Using tools from contact geometry we show that for an arbitrary Riemann surface $\Sigma$ and an arbitrary nonzero integer, there is a circle bundle $M$ with this structure. In Section \ref{section3dexamples}, we begin with Example \ref{spheresection} where we consider the Hopf-fibration of $S^3$ over $S^2$ and give explicit formulas for exact non-steady solutions of \eqref{euler} in this setting. We then extend the Chandrasekhar-Kendall construction of curl eigenfields on any three-dimensional manifold with this structure, in terms of eigenfunctions of the Laplacian on $M$. We conclude by discussing the flat and negatively curved analogues.

\subsubsection{Geometric theory of circle bundles}\label{circlebundles}
Here we will show that given a unit length Killing vector field $X$ which is also a curl eigenfield on a compact $3$-manifold and has periodic orbits all of length $2\pi$, we get a Sasakian contact structure which automatically gives a Boothby-Wang circle fibration over a surface. All such structures are classified by the Euler class of the bundle, and the geometry on the surface is arbitrary. We refer to \cite{blair} and \cite{grabowska2025regularity} for details on the Boothby-Wang fibration and the definitions of Sasakian manifolds, and to \cite{Nicolaescu} and \cite{peralta2021energy} for the relationship to curl eigenfields.

\begin{theorem}\label{boothbywangprop}
    Suppose $M$ is a compact oriented Riemannian $3$-manifold and that $X$ is a unit-length Killing vector field on $M$ with orbits that all have length $2\pi$, such that $\curl{X} = 2\ce X$ for some nonzero $\ce\in\mathbb{R}$. Then there is a Boothby-Wang fibration over a compact oriented surface $\proj\colon M\to\Sigma$ with some nonzero Euler class $\eulerclass\in \mathbb{Z}$, with $\proj$ a Riemannian submersion, and $\pi \eulerclass = \ce \mathrm{Area}(\Sigma)$.

    Conversely given a compact oriented surface $\Sigma$ with any Riemannian metric and any nonzero integer $\eulerclass$, there is a compact oriented $3$-manifold $M$, a map $\proj\colon M\to\Sigma$, and a contact form $\eta$ on $M$ such that $\proj$ is a Riemannian submersion, $\omega$ is the connection $1$-form of the bundle, the Reeb field $X=\eta^{\sharp}$ is unit-length and Killing with orbits of length $2\pi$, and $\curl{X} = 2\ce X$ for $\ce = \pi\eulerclass/\mathrm{Area}(\Sigma)$.
\end{theorem}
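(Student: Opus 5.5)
The plan is to work throughout with the $1$-form $\eta = X^\flat$ and to translate each hypothesis on $X$ into a statement about $\eta$ and $d\eta$. The forward direction then reduces to the Boothby--Wang construction plus Chern--Weil, and the converse to the prequantization construction of a circle bundle with prescribed curvature.

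\emph{Forward direction.} First we show $\eta$ is a contact form with Reeb field $X$.
\begin{itemize}
\item Since $\abs{X}_g=1$, we have $g(\nabla_v X, X)=0$ for all $v$. Combining this with the Killing condition \eqref{Killing condition} gives $g(\nabla_X X, v) = -g(\nabla_v X, X) = 0$, so $\nabla_X X=0$.
\item Since $X$ is Killing, $\mathcal{L}_X\eta = 0$. Cartan's formula together with $\eta(X)=1$ then gives $\iota_X d\eta = \mathcal{L}_X\eta - d(\eta(X)) = 0$.
\item The hypothesis $\curl X = 2\ce X$ reads $\star d\eta = 2\ce\,\eta$, that is, $d\eta = 2\ce\,\star\eta$.
\item Hence $\eta\wedge d\eta = 2\ce\,\eta\wedge\star\eta = 2\ce\,\mathrm{vol}_g$, which is nowhere zero because $\ce\neq 0$.
\end{itemize}
So $\eta$ is contact, and since $\eta(X)=1$ and $\iota_X d\eta=0$, its Reeb field is $X$.

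Next we pass to the quotient. Because all orbits are closed of length $2\pi$ and $X$ has unit speed, the flow of $X$ defines a free $S^1=\R/2\pi\Z$ action; this is the regularity assumption in the Boothby--Wang theorem. We therefore obtain a principal circle bundle $\proj\colon M\to\Sigma$ over a compact surface, which is oriented because $M$ and the fibres are.

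The metric descends as follows. Since the flow of $X$ acts by isometries, the restriction of $g$ to $X^\perp=\ker\eta$ is $S^1$-invariant. It therefore descends to a metric $g_\Sigma$ for which $\proj$ is a Riemannian submersion, and $g=\proj^*g_\Sigma+\eta\otimes\eta$.

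Finally we identify the curvature and apply Chern--Weil. The form $\eta$ is a connection form (after identifying the Lie algebra of $S^1$ with $\R$), because $\eta(X)=1$ and $\mathcal{L}_X\eta=0$. Its curvature $d\eta$ is basic. Moreover $\star\eta$ equals $\proj^*dA_\Sigma$ for the induced orientation, so $d\eta=\proj^*(2\ce\,dA_\Sigma)$. Chern--Weil then gives $\int_\Sigma 2\ce\,dA_\Sigma = 2\pi\eulerclass$ (up to a sign convention) with $\eulerclass\in\Z$ the Euler class. Thus $\pi\eulerclass=\ce\,\mathrm{Area}(\Sigma)$, and $\eulerclass\neq0$ because $\ce\neq0$.

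\emph{Converse.} Given $(\Sigma,g_\Sigma)$ and $\eulerclass\neq0$, we construct the bundle and metric directly.
\begin{itemize}
\item Set $\ce=\pi\eulerclass/\mathrm{Area}(\Sigma)$ and $\Omega=2\ce\,dA_\Sigma$, so that $\int_\Sigma\Omega = 2\pi\eulerclass$.
\item By the classification of circle bundles via $H^2(\Sigma;\Z)\cong\Z$, together with the standard prequantization/Kobayashi construction, there is a principal $S^1$-bundle $\proj\colon M\to\Sigma$ with Euler class $\eulerclass$. Since $[\Omega]$ is the image of its Chern class, any connection form differs from one with curvature $\proj^*\Omega$ by a basic $1$-form; adjusting it yields a connection form $\eta$ with $d\eta=\proj^*\Omega$.
\item Define $g=\proj^*g_\Sigma+\eta\otimes\eta$, and let $X$ be the fundamental vector field of the action, so that $\eta(X)=1$ and $X=\eta^\sharp$.
\end{itemize}
Then we verify the required properties.
\begin{itemize}
\item $X$ is unit length, and its orbits have length $2\pi$.
\item $X$ is Killing: $\mathcal{L}_X\eta=\iota_X\proj^*\Omega=0$ and $\mathcal{L}_X\proj^*g_\Sigma=0$.
\item With the orientation $\eta\wedge\proj^*dA_\Sigma$ we have $\star\eta=\proj^*dA_\Sigma$, so $\star d\eta = 2\ce\,\eta$, that is, $\curl X=2\ce X$.
\item $\eta$ is contact by the computation in the forward direction.
\end{itemize}

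\emph{Expected obstacle.} The delicate point is the regularity step in the forward direction: we must confirm that "all orbits have length $2\pi$" gives a genuinely free action, so that there are no exceptional fibres as in a Seifert fibration and $\Sigma$ is a smooth surface rather than an orbifold. I would first try to argue directly that the time-$2\pi$ flow is the identity and no point has a smaller minimal period. Failing that, I would cite the regularity results of Boothby--Wang as presented in \cite{blair,grabowska2025regularity}. The remaining care is in fixing the sign and $2\pi$ normalizations in Chern--Weil consistently with the orientation convention used for $\star$.
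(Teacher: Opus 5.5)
Your proposal is correct and follows essentially the same route as the paper. Both arguments extract a regular Boothby--Wang circle bundle with connection form $X^\flat$, identify the curvature as $d\eta = 2\ce\star\eta = \proj^*(2\ce\,dA_\Sigma)$ to get $\pi\eulerclass = \ce\,\mathrm{Area}(\Sigma)$, and for the converse use the prequantization bundle with metric $\eta\otimes\eta+\proj^*g_\Sigma$; the only difference is that you check contactness directly via $\eta\wedge d\eta = 2\ce\,\mathrm{vol}_g$, whereas the paper rescales to the Sasakian case $\ce=1$. Your ``expected obstacle'' is in fact immediate: with unit speed, an orbit of length $2\pi$ has minimal period exactly $2\pi$ at every point, so the circle action is free and no exceptional fibres can occur.
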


\begin{proof}
    First suppose $M$ has a unit-length curl eigenfield $X$ which is also Killing. 
    If we had $\ce=1$, then $M$ would be a Sasakian manifold with contact form $X^{\flat}$ and associated Riemannian metric, with $X$ the Reeb field. Since $\ce$ is nonzero, we can change its sign to positive (if needed) by flipping the orientation of $M$, and change its magnitude by rescaling the metric. If we do this, then the fact that $X$ has orbits all of the same length means that it is a regular Reeb field, and the Boothby-Wang theorem~\cite{blair} shows that $M$ must be a circle fibration over a compact oriented surface $\Sigma$, with $X^{\flat}$ as the connection $1$-form and $dX^{\flat}=\proj^*\Omega$ for a $2$-form satisfying $\int_{\Sigma} \Omega = 2\pi \eulerclass$.  Furthermore the projection $\proj$ is a Riemannian submersion (this is true regardless of the rescaling). In addition the fact that $\curl{X}=2\ce X$ means that $dX^{\flat} = 2\ce\star\! X^{\flat}$, while $\star X^{\flat}$ is the horizontal area form $\proj^*dA_{\Sigma}$; thus $\Omega = 2\ce\, dA_{\Sigma}$, and we see that $\pi \eulerclass = \ce \mathrm{Area}(\Sigma)$. 

  For the converse, we can take any compact, oriented surface $\Sigma$ with a given Riemannian metric and corresponding area $2$-form $dA_{\Sigma}$. Choose a nonzero integer $\eulerclass$, and choose the symplectic form on $\Sigma$ to be $\Omega = 2 \ce dA_{\Sigma}$ for a constant $\ce$ so that $\int_{\Sigma} \Omega = 2\pi \eulerclass$. Then Boothby-Wang also demonstrated that there is a contact manifold $M$ and a circle fibration $\proj\colon M\to \Sigma$ such that the contact form $\eta$ on $M$ is the connection $1$-form of the bundle, with $d\eta = \Omega$. Define the Riemannian metric on $M$ by $$g_M = \eta\otimes \eta + \proj^*g_{\Sigma},$$ which is automatically a Riemannian submersion. Let $X=\eta^{\sharp}$ be the Reeb field of $\eta$. Then $X$ is unit and a Killing field under the new metric and has $2\pi$-periods by construction, and the condition $d\eta = \Omega = 2\ce \star \eta$ translates into $X$ being a curl eigenfield. 
\end{proof}

\begin{remark}
    More explicitly, we can choose geodesic polar coordinates on $\Sigma$ so that the metric is given by $g_{\Sigma} = dr^2 + h(r,\sigma)^2 \, d\sigma^2.$
    Then the area form is $dA_{\Sigma} = h(r,\sigma) \, dr\wedge d\sigma$, and a $1$-form $\eta$ on $M$ with the desired properties is given locally by $$\eta = d\theta - 2\ce \stream_{\sigma} \, dr + 2\ce \stream_r\, d\sigma$$ with $\stream$ solving the Laplace equation $\stream_{rr}+\stream_{\sigma\sigma} = h(r,\sigma)$. The metric then takes the Kaluza-Klein form 
    $$ ds^2 = \big( d\theta  - 2\ce \stream_{\sigma} \, dr + 2\ce \stream_r\, d\sigma\big)^2 + dr^2 + (\stream_{rr}+\stream_{\sigma\sigma})^2\,d\sigma^2,$$
    with freedom to specify the nonzero parameter $\ce$ and the function $f$ arbitrarily in a neighborhood.

    For example, choosing $\varphi$ so that $\varphi_r(r,\sigma) = 2\scurv(\tfrac{r}{2})^2$, makes $\varphi_{rr}(r,\sigma) = \scurv(r)$, in terms of the generalized trigonometric functions $s_\kappa$ from \eqref{generalizedtrig}.
    The metric is then given locally by
    $$ ds^2 = \big(d\theta + 2 \scurv(\tfrac{r}{2})^2\, d\sigma\big)^2 + dr^2 + \scurv(r)^2 \, d\sigma^2.$$
    This is precisely the metric given in \cite{lichtenfelz2022axisymmetric} in Corollary 6.11, describing Hopf-like fibrations of left-invariant metrics on one of the Lie groups $SO_3(\mathbb{R})$, the Heisenberg group, or $SL_2(\mathbb{R})$ over the respective surfaces of constant curvature $k$. There the scaling was chosen so that $\ce=\tfrac{1}{2}$.
\end{remark}

\subsubsection{Curl eigenfields in a circle bundle}\label{section3dexamples}
To obtain non-steady solutions of the Euler equations \eqref{euler} in this context via Theorem \ref{simultaneous eigenvalue to euler solution}, we need to solve the equations
\begin{equation}
    [\curl X, \ef] = i \kev \curl \ef, \qquad \curl \ef = \iev \ef, \qquad [X, \ef] = i \aev \ef,
\end{equation}
which, when $\curl X = 2 \ce X$, and $\aev=n$ an integer, reduces to solving
\begin{equation}\label{rossby3d}
    \curl \ef = \iev \ef, \qquad [X, \ef] = i n \ef,
\end{equation}
and using $\lambda = \frac{2n\ce}{\alpha}$.

We begin with the case of the round three-sphere, where a convenient basis of curl eigenfields is already known.

\begin{example}[Generalized Rossby-Haurwitz Waves on the Round Three-Sphere]\label{spheresection}
The Hopf fibration is the most well-known nontrivial circle bundle, mapping $S^3$ to $S^2$. Given the Hopf fields
\begin{equation*}
    \begin{aligned}
    e_1 &= -y \partial_x + x \partial_y - w \partial_z + z \partial_w, \\
    e_2 &= -z\partial_x + w\partial_y + x\partial_z -y \partial_w, \\
    e_3 &= -w\partial_x -z\partial_y + y\partial_z + x\partial_w.
    \end{aligned}
\end{equation*}
we borrow a basis of curl eigenfields from \cite[Appendix A]{benn2025singular} which is convenient for our purposes. In particular,
\begin{equation*}
    \C \otimes \ff(S^3) = \bigcup_{k=0}^\infty \bigcup_{m=0}^k \mathcal{E}_k^m \ \cup \ \bigcup_{k=2}^\infty \bigcup_{m=1}^{k-1} \mathcal{F}_k^m
\end{equation*}
such that for $k\geq1$ if $\ef \in \mathcal{E}_k^m$ then $\curl \ef = (k+2)v$ and $[\ef,e_1] = -i(2m-k)\ef$. Hence, for $\ef \in \mathcal{E}_k^m$, in the context of Theorem \ref{simultaneous eigenvalue to euler solution}, letting $u_0 = e_1$, we have
\begin{equation*}
    K_{e_1} \ef = -i \frac{2(2m-k)}{k+2} \ef, \qquad \curl \ef = (k+2) \ef, \qquad [e_1, \ef] = i (2m-k) \ef.
\end{equation*}
Note that corresponding solutions of the Euler equations generated by these basis elements are stationary\footnote{The solutions generated by the elements of $\mathcal{E}_0^0$ are all stationary.} if and only if $\freq_{k,m} = (2m-k) \left(\frac{2}{k+2}-1\right) = 0$. That is, if $m = \frac{k}{2}$. Hence, the simplest non-stationary solutions come from setting $k=1$ and $m=0$, which gives $\freq_{1,0} = \frac{1}{3}$. We briefly recall the construction of the basis.

Identifying $\mathbb{R}^4 \simeq \mathbb{C}^2$ we introduce complex coordinates
\begin{equation*}
\alpha = x + iy,\qquad \bar{\alpha} = x - iy,\qquad \beta = z + iw,\qquad \bar{\beta} = z - iw
\end{equation*}
and define the family of homogeneous polynomials in the formal variables $z_1, z_2$
\begin{equation*}
F_{k}^m(\alpha, \bar{\alpha}, \beta, \bar{\beta})(z_1, z_2) = (\alpha z_1 + \beta z_2)^m(- \bar{\beta}z_1 + \bar{\alpha}z_2)^{k-m},
\end{equation*}
for integer $k \geq 0$ and $0 \leq m \leq k$ along with, for $0 \leq j \leq k$, the implicitly defined $Q^m_{kj}(\alpha, \bar{\alpha}, \beta, \bar{\beta})$ given by 
\begin{equation}\label{eq_qkmj_def}
F_k^m(\alpha, \bar{\alpha}, \beta, \bar{\beta})(z_1, z_2) = \sum_{j = 0}^k Q^m_{kj}(\alpha, \bar{\alpha}, \beta, \bar{\beta})z_1^jz_2^{k-j}.
\end{equation}
The $Q^m_{kj}$ are eigenfunctions of the Laplacian on $S^3$ with eigenvalues $k(k+2)$.

The simplest basis element\footnote{See \cite[Appendix A]{benn2025singular} for the definitions of the basis elements.} from $\mathcal{E}_1^0$ is
\begin{equation*}
    v_{112}^0 = Q_{11}^0(e_2 - ie_3) = (x-iy)(e_2-ie_3)
\end{equation*}
and the corresponding real solution is the restriction to $S^3$ of
\begin{equation}\label{explicit 3D RHW}
    U_{\mathfrak{R}}(t) = e_1 + \left( xe_2 - ye_3 \right)\cos\left(\frac{t}{3}\right) + \left(ye_2 + xe_3\right)\sin\left(\frac{t}{3}\right)
\end{equation}
which, as noted before, is of a similar spirit to the two-dimensional Rossby-Haurwitz waves given in \eqref{explicit 2D RHW}.
\end{example}

We now outline a recipe for constructing fields on manifolds of the type described in Theorem \ref{boothbywangprop} satisfying the conditions \eqref{rossby3d}---and thus the assumptions of Theorem \ref{simultaneous eigenvalue to euler solution}---from eigenfunctions of the Laplacian. It is a generalization of the Chandrasekhar-Kendall~\cite{chandrasekhar1957force} construction and may be viewed as a converse to Proposition 1 in \cite{peralta2021energy}, where it is shown that (in our notation) if $\ef$ is a curl eigenfield with eigenvalue $\iev$, then $g(X,\ef)$ is a Laplacian eigenfunction with eigenvalue $\delta^2 = \iev(\iev-2\ce)$.

\begin{lemma}\label{cklemma}
    Let $(M,g)$ be a three-dimensional Riemannian manifold which admits a Killing field $X$ such that $\curl{X} = 2\ce X$ for some $\ce\in\mathbb{R}$.
    If $f$ is a complex-valued function on $M$ with constants $\delta>0$ and $\aev \in \R$ such that
    \begin{equation}\label{curleigenfieldfconditions}
        \Delta f = \delta^2 f, \qquad X(f) = i\aev f,
    \end{equation}
    then, letting $\iev_{\pm} := \ce \pm \sqrt{\ce^2+\delta^2}$, the vector fields 
    \begin{equation*}
        \ef_{\pm} = \iev_{\pm}^2 f X + \iev_{\pm} \nabla f\times X + i\aev \nabla f 
    \end{equation*}
    are curl eigenfields on $M$ with eigenvalues $\iev_{\pm}$.
    
    Furthermore, any such $\ef_{\pm}$ yields an exact solution of the Euler equations \eqref{euler} via Theorem \ref{simultaneous eigenvalue to euler solution}.
\end{lemma}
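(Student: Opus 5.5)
We compute $\curl$ of each of the three summands of $\ef_{\pm}$ directly, using the Killing and Beltrami properties of $X$, and then choose $\iev$ so that the result is $\iev\,\ef$. The natural choice is to work with $1$-forms and the Hodge star, so that $\curl V = (\star dV^\flat)^\sharp$ and $(V\times W)^\flat = \star(V^\flat\wedge W^\flat)$.

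\textbf{Step 1: the three basic curls.} Write $\eta = X^\flat$, so that $d\eta = 2\ce\star\eta$.
\begin{itemize}
\item For the gradient term, $\curl\nabla f = 0$.
\item For the term $fX$,
\begin{equation*}
\curl(fX) = \nabla f\times X + 2\ce fX .
\end{equation*}
\item For the cross term $\nabla f\times X$, we use
\begin{equation*}
\curl(A\times B) = A\,\diver B - B\,\diver A + [B,A]
\end{equation*}
(valid on any oriented Riemannian $3$-manifold, via $\diver$ and Lie derivatives of the volume form). With $\diver X=0$ and $\diver\nabla f = -\Delta f = -\delta^2 f$ (positive Laplacian convention), this gives
\begin{equation*}
\curl(\nabla f\times X) = \delta^2 fX + [X,\nabla f].
\end{equation*}
\item Since $X$ is Killing, $\mathcal{L}_X$ commutes with $\nabla$, so $[X,\nabla f] = \nabla(Xf) = i\aev\nabla f$.
\end{itemize}
The last point is the one I would double-check most carefully, along with the sign and orientation conventions in the cross-product identity.

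\textbf{Step 2: assembling the curl.} For a general combination $\ef = a fX + b\,\nabla f\times X + c\,\nabla f$, Step 1 gives
\begin{equation*}
\curl\ef = (2\ce a + \delta^2 b)\, fX + a\,\nabla f\times X + i\aev b\,\nabla f .
\end{equation*}
Setting $\curl\ef = \iev\ef$ and matching coefficients yields
\begin{equation*}
a = \iev b, \qquad i\aev b = \iev c, \qquad 2\ce a + \delta^2 b = \iev a .
\end{equation*}
Substituting the first into the third gives $\iev^2 - 2\ce\iev - \delta^2 = 0$. Its roots are $\iev_\pm = \ce\pm\sqrt{\ce^2+\delta^2}$, and they are nonzero because $\delta>0$. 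Taking $b = \iev$ gives $a = \iev^2$ and $c = i\aev$, which is exactly the stated $\ef_\pm$.

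One caveat: if the three fields $fX$, $\nabla f\times X$, $\nabla f$ are not independent, matching coefficients is sufficient but not necessary, and sufficiency is all we need. We should also note that $\ef_\pm \neq 0$ whenever $f\neq 0$, since its $X$-component is $\iev^2 f$. Divergence-freeness is automatic from $\ef = \iev^{-1}\curl\ef$.

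\textbf{Step 3: the Euler solution.} Since $X$ is Killing, $\mathcal{L}_X$ commutes with $\times$ (it preserves $g$ and the volume form), with $\nabla$, and with multiplication by $f$ via the Leibniz rule. Hence $[X,\ef_\pm] = i\aev\ef_\pm$. Then
\begin{equation*}
K_X\ef = \curl^{-1}[\ef, 2\ce X] = -\frac{2\ce i\aev}{\iev}\,\ef ,
\end{equation*}
so the hypotheses of Theorem~\ref{simultaneous eigenvalue to euler solution} hold with $u_0 = X$ and $\kev = 2\ce\aev/\iev$. Here $X$ is steady because $[X,\curl X] = 2\ce[X,X] = 0$.

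The main obstacle is boundary behaviour and membership of $\ef_\pm$ in $\C\otimes\ff$, which requires tangency and orthogonality to harmonic fields. On closed $M$ orthogonality follows from $\ef = \curl(\ef/\iev)$, since harmonic fields are $L^2$-orthogonal to curls. With boundary, I would either restrict the statement to closed manifolds or impose tangency as an extra condition on $f$.
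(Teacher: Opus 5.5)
Your argument is correct and is essentially the paper's proof: the paper computes $\curl$ on $E_1=fX$, $E_2=\nabla f\times X$, $E_3=\nabla f$ exactly as in your Step~1 and reads off $\iev_\pm$ (and $0$) as eigenvalues of the resulting $3\times 3$ matrix, and your Step~3 just spells out the value $\kev=2\ce\aev/\iev$ that the paper records before Example~\ref{spheresection}. One aside is false, although the lemma does not need it: $\nabla f$ has $X$-component $g(\nabla f,X)=X(f)=i\aev f$, so $g(\ef_\pm,X)=(\iev_\pm^2\abs{X}_g^2-\aev^2)f$ rather than $\iev_\pm^2\abs{X}_g^2 f$, and $\ef_\pm$ can vanish identically with $f\neq 0$ (e.g.\ $f=e^{imz}$ on the flat $\T^3$ with $X=\partial_z$, where $\nabla f\times X=0$ and $\iev_\pm^2=\aev^2=m^2$; likewise one of $\ef_\pm$ vanishes for $f=(x+iy)^k$ on $S^3$ with $X=e_1$), so nontriviality has to be checked case by case.
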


\begin{proof} For any $f$ satisfying the assumptions of \eqref{curleigenfieldfconditions}, consider the vector fields
    \begin{equation}\label{curleigenbasis}
    E_1 = fX, \qquad E_2 = \nabla f\times X, \qquad E_3 = \nabla f.
    \end{equation}
    Taking the $\curl$  of these fields yields
    \begin{align*}
        \curl(E_1) &= \nabla f\times X + f \curl{X} = 2\ce E_1 + E_2 \\
        \curl(E_2) &= (\diver{X}) \nabla f + (\Delta f) X + [X,\nabla f] = \delta^2 f X + i\aev \nabla f = \delta^2 E_1 + i\aev E_3 \\
        \curl(E_3) &= 0.
    \end{align*}
where we have used the fact that $\diver{X}=0$ and that, since $X$ is a Killing field, its action on functions commutes with the gradient, giving $[X,\nabla f] = \nabla X(f)$. Hence curl on the span of these vector fields is given by the matrix
\begin{equation*}
    \curl\vert_{\mathrm{span}\{E_i\}}
    =
    \left(
    \begin{matrix} 
    2\ce & \delta^2 & 0 \\
    1 & 0 & 0 \\
    0 & i\aev & 0
    \end{matrix}
    \right),    
\end{equation*}
whose eigenvalues are $\alpha_{\pm}$ and $0$ with corresponding eigenvectors $\ef_{\pm}$ and $E_3$ respectively.
\end{proof}

\begin{remark}
    Note that the existence of $f$ satisfying \eqref{curleigenfieldfconditions} comes from the fact that $X$ is a Killing field and hence commutes with the Laplacian. Hence, we can find an $L^2$ basis of simultaneous eigenfunctions. Each one generates an exact solution of the three-dimensional Euler equations via Theorem \ref{simultaneous eigenvalue to euler solution}.
\end{remark}

Armed with this, we now consider the flat and negatively curved analogues of Example \ref{spheresection}.

\begin{example}[The Heisenberg Manifold]
    The Heisenberg group is the space of upper-triangular $3\times 3$ matrices with $1$ on the diagonal, which is homeomorphic to $\mathbb{R}^3$. The set of such matrices with integer entries is a discrete subgroup, and the quotient is a compact $3$-manifold called the Heisenberg manifold. It is a circle bundle over the flat torus $\T^2$. Eigenfunctions of the Laplacian can be worked out as in Tolimieri~\cite{tolimieri1977analysis} using Hermite functions, and from these one could construct curl eigenfields in order to obtain explicit solutions. The formulas involve infinite sums, however, and are not nearly as elementary as in the case of the $3$-sphere, so we will not discuss them further here, although the interested reader could calculate them.
\end{example}

\begin{example}[Hyperbolic Circle Bundles]
    Let $\Sigma$ be a compact orientable surface of genus $g\ge 2$. By uniformization, $\Sigma$ admits a metric of constant curvature $-1$, realized as a quotient of the hyperbolic plane by a discrete group of M\"obius transformations. In their study of the fast dynamo problem, Arnold and Khesin \cite[Chapter V.4.D]{arnold2021topological} illustrated that the unit tangent bundle of such a surface is a nontrivial circle bundle of the type appearing in Theorem~\ref{boothbywangprop}, and carries natural curl eigenfields arising from this geometry. The same geometry gives examples that fit here, although the eigenfunctions of the Laplacian are harder to compute explicitly. The Euler class $k$ from Theorem \ref{boothbywangprop} for a unit tangent bundle happens to coincide with the Euler characteristic, so $k=\chi(\Sigma)=2-2g$.  The area of $\Sigma$ is given by the Gauss-Bonnet theorem as $\mathrm{Area}(\Sigma) = -2\pi\chi(\Sigma) = 4\pi(g-1)$. Thus the curl eigenvalue is $2\epsilon = -1$.
    We refer the interested reader to \cite{arnold2021topological} for details of the geometric construction. Note lastly that this construction also works over $S^2$, giving the bundle with Euler class $k=2$, but it does not work on $\mathbb{T}^2$ since the unit circle bundle is $\mathbb{T}^3$, which is a trivial bundle.
\end{example}

\subsection{Torus Bundles over an Interval}\label{torus}
We now consider the second geometric situation from Section~\ref{3D}, in which the Killing field $X$ and its vorticity $\omega=\curl X$ are not everywhere parallel; and hence are linearly independent on an open set. In this case Arnold’s structure theorem suggests a toroidal organization of the flow, and the geometry is naturally expressed in coordinates adapted to the commuting fields $X$ and $\omega$. Our first goal is to determine the local form of the metric under the additional assumption that both fields are Killing. We then use this structure to construct explicit curl eigenfields and corresponding non-steady Euler solutions.

\subsubsection{The form of the metric}
\begin{theorem}\label{transversecurltheorem}
    Let $(M,g)$ be an oriented Riemannian $3$-manifold. If there is a Killing field $X$ such that $\omega=\curl{X}$ is also Killing, and $\{X,\omega\}$ are linearly independent in some neighborhood, then there exist local oriented coordinates $(r,\theta, z)$, a constant $c$ and a positive function $\fnone$ such that the metric is given by
    \begin{equation}\label{3disometriesmetric}
        ds^2 = dr^2 + \Big(\fnone(r) d\theta + \frac{c}{\fnone(r)}dz\Big)^2 + \fnone'(r)^2dz^2
    \end{equation}
    with $X=\partial_\theta$ and $\omega = 2\partial_z$.
\end{theorem}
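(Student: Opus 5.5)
The strategy is to build coordinates adapted to the pair of commuting Killing fields $X$ and $\omega$, show that all metric coefficients depend on one transverse variable only, and then read off the curl condition $\curl X=\omega$ as a pair of ODEs for those coefficients.

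\textbf{Step 1: $X$ and $\omega$ commute.} Since $X$ is Killing, it is a steady solution of \eqref{euler}, as noted before \eqref{Killing condition} and shown in the appendix. By \eqref{inertia euler} this means $[X,\omega]=[X,\curl X]=0$. In the open set where $X$ and $\omega$ are linearly independent, their flows therefore generate a local isometric action of $\R^2$ whose orbits are $2$-dimensional.

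\textbf{Step 2: adapted coordinates.} Let $N$ be a unit normal field to these orbit surfaces, chosen continuously. Both flows are isometries preserving the orbit foliation, so they preserve the normal line field and its unit section (up to a sign that continuity fixes). Hence $[N,X]=[N,\omega]=0$. The three fields $N$, $X$, $\tfrac12\omega$ commute and are linearly independent, so there are local coordinates $(r,\theta,z)$ with
\[
\partial_r=N,\qquad \partial_\theta=X,\qquad \partial_z=\tfrac12\omega .
\]
Flipping $z$ or $r$ if needed makes these coordinates oriented. Since $N$ is unit and orthogonal to the orbits, there are no $dr\,d\theta$ or $dr\,dz$ cross terms. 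The remaining coefficients $E=g(\partial_\theta,\partial_\theta)$, $F=g(\partial_\theta,\partial_z)$ and $G=g(\partial_z,\partial_z)$ are invariant under both isometric flows, so they depend on $r$ alone. This gives
\[
g=dr^2+E(r)\,d\theta^2+2F(r)\,d\theta\,dz+G(r)\,dz^2 .
\]
This step, producing a commuting transverse coordinate, is the one I expect to need the most care. The key point is that the isometries preserve the unit normal, which forces the bracket relations $[N,X]=[N,\omega]=0$.

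\textbf{Step 3: the curl condition.} We have $X^\flat=E\,d\theta+F\,dz$, so $dX^\flat=E'\,dr\wedge d\theta+F'\,dr\wedge dz$. The volume form is $D\,dr\wedge d\theta\wedge dz$ with $D=\sqrt{EG-F^2}$. For $\curl X=a\partial_\theta+b\partial_z$, the definition \eqref{curl} gives
\[
\iota_{\curl X}\mathrm{vol}=D\big(-a\,dr\wedge dz+b\,dr\wedge d\theta\big).
\]
Matching this with $dX^\flat$ and imposing $\curl X=2\partial_z$ (that is, $a=0$, $b=2$) yields
\[
F'=0,\qquad E'=2D .
\]
So $F\equiv c$ is constant. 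Since $D>0$, $E$ is strictly increasing in $r$ and we may write $E=\fnone^2$ with $\fnone>0$ and $\fnone'>0$. Then $D=\fnone\fnone'$, and $EG-c^2=\fnone^2\fnone'^2$ gives
\[
G=\fnone'^2+\frac{c^2}{\fnone^2}.
\]

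\textbf{Step 4: assembling the metric.} Substituting these coefficients,
\[
dr^2+\fnone^2d\theta^2+2c\,d\theta\,dz+\Big(\fnone'^2+\frac{c^2}{\fnone^2}\Big)dz^2
=dr^2+\Big(\fnone\,d\theta+\frac{c}{\fnone}\,dz\Big)^2+\fnone'^2dz^2,
\]
which is \eqref{3disometriesmetric}. As a consistency check, any metric of this form has $\partial_\theta$ and $\partial_z$ Killing, since its coefficients depend only on $r$. The sign choices in Step 2 (orientation of $r$ and of $z$) are exactly what is needed to get $\fnone'>0$ and $\omega=+2\partial_z$ rather than $-2\partial_z$.
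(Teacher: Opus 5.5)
Your proof is correct and follows essentially the paper's route. Commuting Killing fields give $X=\partial_\theta$ and $\omega=2\partial_z$, an orthogonal unit transverse coordinate $r$ makes every coefficient a function of $r$ alone, and the curl equation $dX^\flat=\iota_{\omega}\mathrm{vol}$ then forces the stated form. There are two differences, both of which make your argument slightly more self-contained. You build $r$ from the unit normal to the orbit foliation, which commutes with $X$ and $\omega$ by isometry-invariance, rather than from normal geodesics off a level set of $\abs{X}_g$. You also read off $F'=0$ directly from the $dr\wedge dz$ component of the curl equation instead of invoking Proposition~\ref{beltramilemma}.

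One small correction on orientation: only the flip $r\mapsto -r$ (that is, $N\mapsto -N$) is admissible, since flipping $z$ would turn $\omega=2\partial_z$ into $\omega=-2\partial_z$. Once $(N,X,\omega)$ is positively oriented, $\varphi'>0$ follows automatically from $E'=2D$.
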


\begin{proof}
In any neighborhood where $X$ and $\omega$ are linearly independent, they commute by \eqref{Killing Lie bracket}, so we may choose coordinates $(\theta,z)$ such that $X = \partial_{\theta}$ and $\omega = 2\,\partial_{z}$, where the normalization of $\omega$ is chosen for convenience. By formula \eqref{killingcross}, the function $\lvert X\rvert_g^2$ is not constant in this neighborhood. Let $\Sigma$ be a regular level surface of $\abs{X}_g$. Since $X$ and $\omega$ are linearly independent, the vector field $X \times \omega$ is nowhere vanishing and orthogonal to both. We define a coordinate $r$ by taking unit-speed geodesics orthogonal to $\Sigma$ in the direction of $X \times \omega$. By construction,
\begin{equation*}
    g(\partial_r,\partial_r)=1, \qquad \nabla_{\partial_r}\partial_r = 0,
\end{equation*}
cf. Petersen~\cite[Section 5.6]{petersen2006riemannian}. We claim that $g(\partial_r,X)=g(\partial_r,\omega)=0$. Indeed,
\begin{equation*}
\partial_r\big(g(\partial_r,X)\big)
= g(\nabla_{\partial_r}\partial_r,X) + g(\partial_r,\nabla_{\partial_r}X).
\end{equation*}
The first term vanishes since $\nabla_{\partial_r}\partial_r=0$, and the second vanishes because $X$ is Killing. Since $g(\partial_r,X)=0$ along $\Sigma$, it follows that $g(\partial_r,X)\equiv 0$. The same argument applies to $\omega$. Hence
\begin{equation*}
g(\partial_r,\partial_\theta)=g(\partial_r,\partial_z)=0.
\end{equation*}

By Proposition~\ref{beltramilemma}, the quantity $g(\partial_\theta,\partial_z)$ is constant; denote it by $c$. Since the metric is invariant under both Killing fields, its coefficients depend only on $r$. It follows that the metric can be written in the form
\begin{equation*}
    ds^2 = dr^2 + \left(\fnone(r)d\theta + \frac{c}{\fnone(r)}dz\right)^2 + \fntwo(r)^2dz^2,
\end{equation*}
for positive functions $\fnone(r)$ and $\fntwo(r)$.
An orthonormal basis of $1$-forms is given by
\begin{equation*}
   \xi^1 = dr, \quad \xi^2 = \fnone(r)\,d\theta + \frac{c}{\fnone(r)}\, dz, \quad \xi^3 = \fntwo(r)\,dz. 
\end{equation*}

We compute
\begin{equation*}
X^\flat = \partial_\theta^\flat = \fnone(r)^2\,d\theta + c\,dz = \fnone(r)\,\xi^2,
\end{equation*}
and hence
\begin{equation*}
    dX^\flat = 2\fnone(r)\fnone'(r)\,dr \wedge d\theta.
\end{equation*}
Expressing this in the orthonormal coframe gives
\begin{equation*}
    dX^\flat =
2\fnone'(r)\left(
\xi^1 \wedge \xi^2
+
\frac{c}{\fnone(r)\fntwo(r)}\,\xi^3 \wedge \xi^1
\right),
\end{equation*}
so that
\begin{equation*}
    \star dX^\flat =
2\fnone'(r)\left(
\xi^3
+
\frac{c}{\fnone(r)\fntwo(r)}\,\xi^2
\right).
\end{equation*}
On the other hand,
\begin{equation*}
    \omega^\flat = 2\,\partial_z^\flat =
2\left(
\fntwo(r)\,\xi^3 + \frac{c}{\fnone(r)}\,\xi^2
\right).
\end{equation*}
Comparing $\star dX^\flat = \omega^\flat$ yields $\fntwo(r) = \fnone'(r)$, and substituting this into the metric gives \eqref{3disometriesmetric}.

Conversely, for any constant $c$ and positive function $\fnone$, the vector field $\partial_\theta$ is Killing, as the metric components are independent of $\theta$. We can then compute that $\curl(\partial_\theta)=2\,\partial_z$ which, since the metric coefficients are also independent of $z$, is a linearly independent Killing field. This completes the proof.
\end{proof}

\begin{remark}\label{czeroremark}
Although we have $\curl{\partial_{\theta}} = 2\,\partial_z$, the curl of $\partial_z$ is not so simple:
\begin{equation}\label{curldzcmetric}
\curl{\partial_z} = 2\left( \frac{c^2}{\fnone(r)^4} - \frac{\fnone''(r)}{\fnone(r)}\right) \, \partial_{\theta}.
\end{equation}
In the special case where $c=0$ and $\fnone(r) = \scurv(r)$ as in \eqref{generalizedtrig}, this metric becomes 
$$ ds^2 = dr^2 + \scurv(r)^2 \, d\theta^2 + \scurv'(r)^2 \, dz^2,$$
which is the standard constant curvature metric in polar coordinates for $\curv\in \{1,0,-1\}$ on $S^3$, $\mathbb{R}^3$, or hyperbolic $3$-space. Here the Killing field is the usual axisymmetric rotation $\partial_{\theta}$, which vanishes along the curve $r=0$. In this case we have $\curl{\partial_{\theta}} = 2\partial_z$ while $\curl{\partial_z} = 2\curv \partial_{\theta}$. When $\curv=1$ on the $3$-sphere, we see that $\partial_{\theta}+\partial_z$ is exactly the Hopf field from Example \ref{spheresection}. Hence the formulas here may be used to obtain new solutions analogous to the ones presented there.
\end{remark}
\begin{remark}\label{cnonzeroremark}
The metric \eqref{3disometriesmetric} has volume form $dV = \fnone(r) \fnone'(r) \,dr\wedge d\theta \wedge dz$  regardless of $c$. If we define $\eta = \fnone(r)^2 \, d\theta + c\,dz$, then we can compute 
$$ \eta\wedge d\eta = 2c \fnone(r) \fnone'(r) \, dr \wedge d\theta\wedge dz = 2c \, dV,$$
so that for $c\ne 0$ we have a contact form. The corresponding Reeb field is $\xi = c^{-1} \, \partial_z$, which has squared length $\frac{\fnone'(r)^2}{c^2}$. In the special case $\fnone(r)=r$ and $c=1$, the Reeb field is Killing and unit length, so we have a Sasakian geometry. 
The scalar curvature of the metric \eqref{3disometriesmetric} in general is given by 
$$ R = -\frac{2\fnone'''(r)}{\fnone'(r)} - \frac{4\fnone''(r)}{\fnone(r)} - \frac{2c^2}{\fnone(r)^4}, $$
from which we see that the parameter $c$ genuinely changes the geometry, even in the special case $\fnone(r)=r$. For $\fnone(r) = \sin{r}$ we get $R = 6 - 2c^2 \csc^4{r}$, showing that in general we cannot expect to make this metric well-defined at a point where $\fnone(r)=0$; hence it will typically only make sense on a non-compact manifold or a manifold with boundary. We do not know if there is any closed $3$-manifold that admits such a geometry with $c\ne 0$.
\end{remark}
\subsubsection{Curl eigenfields in a torus bundle}
We now study the geometry of the $c$-metric \eqref{3disometriesmetric}, and construct solutions to the Euler equations \eqref{euler} in this geometry via Theorem \ref{simultaneous eigenvalue to euler solution}. For concreteness suppose that $\theta$ and $z$ are both $2\pi$-periodic, so that $X=\partial_{\theta}$ and $\omega_0 = 2\partial_z$ span a torus for each fixed $r$ with closed trajectories, and assume $M$ is a solid torus bounded by $\radiusone \le r\le \radiustwo$ with both $\fnone(r)$ and $\fnone'(r)$ positive everywhere (which avoids potential singularities where either vanishes).

The following construction is similar to that of the curl eigenfields in Appendix C of \cite{peralta2025asymmetry}, on the flat solid torus (with $c=0$ and $\fnone(r)=r$). Note that we cannot use the technique of Lemma \ref{cklemma} here, since although $\curl{(\partial_{\theta})} = 2\,\partial_z$ by our definitions, the curl of $\partial_z$ does not simplify, cf. \eqref{curldzcmetric}. In particular we would in general not get a finite-dimensional subspace by repeatedly taking curls. Therefore the computations need to be done directly as in the proof of Theorem \ref{transversecurltheorem}. The details of the proof are presented in Appendix \ref{geometric lemmas}.

\begin{theorem}\label{cmetriccurls}
Consider the three-dimensional manifold $[\radiusone,\radiustwo]\times \mathbb{T}^2$ with metric \eqref{3disometriesmetric}. For $m,n \in \Z$, let $\fnthree$ and $\fnfour$ solve the Sturm-Liouville type system
\begin{align}
\alpha \fnone(r) \fnone'(r) \fnthree'(r) &= n\left( m - \frac{cn}{\fnone(r)^2}\right) \, \fnthree(r) + \big( \alpha^2 \fnone(r)^2 - n^2\big) \fnfour(r) \label{geqcmetric} \\
\alpha \fnone(r)\fnone'(r) \fnfour'(r) &= \left( 2c\alpha \, \frac{\fnone'(r)^2}{\fnone(r)^2} + \left( m-\frac{cn}{\fnone(r)^2}\right)^2 - \alpha^2 \fnone'(r)^2 \right) \, \fnthree(r) - n\left( m - \frac{cn}{\fnone(r)^2}\right) \, \fnfour(r),\label{heqcmetric}
\end{align}
with boundary conditions 
\begin{equation}\label{cmetricbcs}
n\fnfour(\radiusone) - \left( m- \frac{cn}{\fnone(\radiusone)^2}\right) \fnthree(\radiusone) = n\fnfour(\radiustwo) - \left( m- \frac{cn}{\fnone(\radiustwo)^2}\right) \fnthree(\radiustwo) = 0,
\end{equation}
the constant $\iev$ implicitly defined and $\fnfive$ given by
\begin{equation*}
    \fnfive(r) = \,\frac{n\fnfour(r)-\big(m-\frac{cn}{\fnone(r)^2}\big)\fnthree(r)}{\alpha \fnone(r)\fnone'(r)}.
\end{equation*}
Then the complex fields
\begin{equation}\label{cmetriccurlfield}
    \ef(r,\theta,z) = e^{in\theta} e^{imz} \bigg( i \fnfive(r) \, \partial_r + 
    \frac{\fnthree(r)}{\fnone(r)^2} \, \partial_{\theta} + \frac{\fnfour(r)}{\fnone'(r)^2} \Big( -\frac{c}{\fnone(r)^2} \, \partial_{\theta} + \partial_z\Big)
    \bigg)
\end{equation}
are curl-eigenfields satisfying the conditions of Theorem \ref{simultaneous eigenvalue to euler solution} with
\begin{equation*}
        K_{\partial_\theta} \ef = -i \frac{2m}{\alpha} \ef, \qquad \curl \ef = \iev \ef, \qquad [\partial_\theta, \ef] = i n \ef.
    \end{equation*}
\end{theorem}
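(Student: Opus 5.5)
We work in the orthonormal coframe $\xi^1=dr$, $\xi^2=\fnone\,d\theta+\frac{c}{\fnone}dz$, $\xi^3=\fnone'\,dz$ from the proof of Theorem~\ref{transversecurltheorem}, with volume form $dV=\fnone\fnone'\,dr\wedge d\theta\wedge dz$. The first step is to lower the index of $\ef$. Note that $\partial_\theta^\flat=\fnone\,\xi^2$ and $(-\frac{c}{\fnone^2}\partial_\theta+\partial_z)^\flat=\fnone'\,\xi^3$, since this field is orthogonal to $\partial_\theta$. Hence
\[
\ef^\flat = e^{i(n\theta+mz)}\Big(i\fnfive\,dr + \fnthree\,d\theta + \fnfour\,dz\Big),
\]
which is presumably why $\ef$ was written in that particular form. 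This representation also makes $[\partial_\theta,\ef]=in\ef$ immediate, because all coefficients are independent of $\theta$. Tangency to the boundary is likewise immediate: the $\partial_r$ component is $i\fnfive$, and $\fnfive$ vanishes at $\radiusone,\radiustwo$ exactly by the boundary conditions \eqref{cmetricbcs}.

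The second step is to compute $d\ef^\flat$. Writing $\beta=m-\frac{cn}{\fnone^2}$, we get
\[
d\ef^\flat = e^{i(n\theta+mz)}\Big[(\fnthree'+n\fnfive)\,dr\wedge d\theta + (\fnfour'+m\fnfive)\,dr\wedge dz + i(n\fnfour-m\fnthree)\,d\theta\wedge dz\Big].
\]
Next we apply $\star$ using $\star(dr\wedge d\theta)=\frac{1}{\fnone\fnone'}(\text{the dual of }dz\text{ coefficients})$, that is, $\curl$ in coordinates is $\frac{1}{\fnone\fnone'}$ times the usual cross-product pattern applied to the covariant components. This gives
\[
\curl\ef = \frac{e^{i(n\theta+mz)}}{\fnone\fnone'}\Big( i(n\fnfour-m\fnthree)\partial_r - (\fnfour'+m\fnfive)\partial_\theta + (\fnthree'+n\fnfive)\partial_z\Big).
\]

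The third step is to impose $\curl\ef=\iev\ef$, which we do by comparing against $\ef^\sharp$ expressed in the coordinate basis. The $\partial_r$ equation gives $\iev\fnfive\fnone\fnone' = n\fnfour-m\fnthree$. This differs from the stated $\fnfive$ by the term $\frac{cn}{\fnone^2}\fnthree$, so we must carefully use the inverse metric when raising indices. The $\theta$-coefficient of $\ef$ in the coordinate basis is $\frac{\fnthree}{\fnone^2}-\frac{c\fnfour}{\fnone^2\fnone'^2}$, and the $z$-coefficient is $\frac{\fnfour}{\fnone'^2}$. The curl should similarly be computed with raised indices: $\curl\ef=\star d\ef^\flat$ is a $1$-form, which we then raise. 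Doing this consistently and equating coefficients produces two first-order ODEs for $\fnthree'$ and $\fnfour'$. After eliminating $\fnfive$ via the radial equation, these should reduce to \eqref{geqcmetric}–\eqref{heqcmetric}, with the $\beta$ combinations arising from the off-diagonal $c$ terms. This bookkeeping is the main obstacle. To keep it manageable, we would do it entirely in the orthonormal coframe, writing $\ef^\flat$ in terms of $\xi^i$ (the $\xi^2$ and $\xi^3$ components are $\fnthree/\fnone$ and $(\fnfour-c\fnthree/\fnone^2)/\fnone'$) so that $\star$ is simply a permutation. Since the stated system is taken as given, we only need to check that it is equivalent to these component equations, so we can verify it by substitution rather than derive it.

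Finally, for the coadjoint condition we use $\curl\partial_\theta=2\partial_z$ and compute
\[
[\ef,2\partial_z] = -2\,\partial_z(\text{coefficients}) = -2im\,\ef.
\]
Then $K_{\partial_\theta}\ef=\curl^{-1}(-2im\,\ef)=-\frac{2im}{\iev}\ef$, using that $\ef$ is a curl eigenfield. It remains to check that $\ef\in\C\otimes\ff$. It is divergence-free because it is a curl eigenfield with $\iev\neq0$, and it is $L^2$-orthogonal to harmonic fields because $\ef=\iev^{-1}\curl\ef$ and $\curl$ is self-adjoint on fields tangent to the boundary. Theorem~\ref{simultaneous eigenvalue to euler solution} then applies with $\kev=2m/\iev$ and $\aev=n$.
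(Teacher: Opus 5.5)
\textbf{There is a genuine gap: the index-lowering in your first step is wrong, and the core curl computation is never carried out.}

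You correctly note $\partial_\theta^\flat=\varphi\,\xi^2$. But $\varphi\,\xi^2=\varphi^2\,d\theta+c\,dz$, so $\frac{a}{\varphi^2}\partial_\theta$ lowers to $a\,d\theta+\frac{ca}{\varphi^2}\,dz$, not to $a\,d\theta$. The correct formula is
\[
\ef^\flat=e^{i(n\theta+mz)}\Big(if\,dr+a\,d\theta+\big(b+\tfrac{ca}{\varphi^2}\big)dz\Big)=e^{i(n\theta+mz)}\big(if\,\sigma^1+a\,\sigma^2+b\,\sigma^3\big),
\]
with $\sigma^2=d\theta+\frac{c}{\varphi^2}dz$ and $\sigma^3=dz$. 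This is exactly the ansatz the paper starts from. In your orthonormal coframe the components are $a/\varphi$ and $b/\varphi'$, not $(b-ca/\varphi^2)/\varphi'$.

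The dropped term is the entire source of the mismatch you found in the radial equation. With it, the $d\theta\wedge dz$ coefficient of $d\ef^\flat$ is $i\big(n(b+\frac{ca}{\varphi^2})-ma\big)=i\big(nb-(m-\frac{cn}{\varphi^2})a\big)$, which is precisely $\alpha\varphi\varphi' f$ for the stated $f$. Your diagnosis via the inverse metric cannot be right, for two reasons. First, $g^{r\theta}=g^{rz}=0$, so raising an index never affects the $\partial_r$-component. Second, your formula $\frac{1}{\varphi\varphi'}\epsilon^{ijk}\partial_j u_k$ already produces contravariant components, so there is nothing left to raise. Similarly, the term $\frac{2c\varphi'}{\varphi^3}a$ in \eqref{heqcmetric} comes from $\partial_r\big(\frac{ca}{\varphi^2}\big)$; in the paper it enters through $d\sigma^2=\frac{2c\varphi'}{\varphi^3}\sigma^3\wedge\sigma^1$. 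It can never appear starting from your $\ef^\flat$.

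Consequently the main content of the theorem, that $\curl\ef=\alpha\ef$ under \eqref{geqcmetric}--\eqref{cmetricbcs}, is not established. You defer it as bookkeeping to be checked by substitution, but substituting into your setup would fail as soon as $c\neq 0$ (already in the radial equation when $cn\neq 0$).

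With the corrected $\ef^\flat$, your coordinate route does go through and is essentially the paper's. Write $\beta=m-\frac{cn}{\varphi^2}$.
\begin{itemize}
\item The $r$-component of $\curl\ef=\alpha\ef$ gives $\alpha\varphi\varphi' f=nb-\beta a$.
\item The $z$-component gives $a'+nf=\alpha\frac{\varphi}{\varphi'}b$.
\item The $\theta$-component, after substituting the $z$-equation (the terms $\frac{c\alpha b}{\varphi\varphi'}$ cancel), gives $b'-\frac{2c\varphi'}{\varphi^3}a+\beta f=-\alpha\frac{\varphi'}{\varphi}a$.
\end{itemize}
Eliminating $f$ then yields \eqref{geqcmetric}--\eqref{heqcmetric}.

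Your treatment of $[\partial_\theta,\ef]=in\ef$, of tangency, and of $K_{\partial_\theta}\ef=-\frac{2im}{\alpha}\ef$ is fine.

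One further point concerns orthogonality to harmonic fields. $\curl$ is not symmetric on fields that are merely tangent to $\partial M$. Integrating by parts leaves the boundary term $\int_{\partial M}g(\ef\times h,\nu)$, and since $\ef$ and $h$ are both tangent, $\ef\times h$ is normal there. So your argument for $L^2$-orthogonality to harmonic fields is invalid as stated. In this setting the conclusion follows differently: the harmonic fields are $(p\,d\theta+q\,dz)^\sharp$ with $p,q$ constant, hence independent of $(\theta,z)$. The factor $e^{i(n\theta+mz)}$ therefore kills the inner product whenever $(n,m)\neq(0,0)$.
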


In the simplest case, the curl eigenfields reduce to Chandrasekhar-Kendall modes on the cylinder.

\begin{example}\label{cequalszero}
    On the solid torus $D^2\times S^1$, with the standard flat metric $ds^2 = dr^2 + r^2 \,d\theta^2 + dz^2$ corresponding to $c=0$ and $\varphi(r)=r$ in \eqref{3disometriesmetric}, with $\radiusone=0$ and $\radiustwo=1$, 
    we can see that $\fnfour$ satisfies the Bessel equation 
    $$ \fnfour''(r) + \frac{1}{r} \, \fnfour'(r) + \Big( \alpha^2 - m^2 - \frac{n^2}{r^2}\Big) \fnfour(r) = 0.$$
    The solution is \begin{equation}
        \fnfour(r) = -\beta^2 J_n(\beta r), \qquad \fnthree(r) = 
        \beta \alpha r J_n'(\beta r) + nm J_n(\beta r), \qquad \alpha^2=\beta^2+m^2,
    \end{equation} with $\beta$ chosen so that 
    $$ m\beta J_n'(\beta) + n\alpha \, J_n(\beta)=0. $$
    The corresponding curl eigenfield \eqref{cmetriccurlfield} is precisely the Chandrasekhar-Kendall~\cite{chandrasekhar1957force} field on the cylinder
$$     v(r,\theta,z) = e^{in\theta} e^{imz} \bigg( i\,\frac{n\fnfour(r)-m\fnthree(r)}{\alpha r} \, \partial_r + 
    \frac{\fnthree(r)}{r^2} \, \partial_{\theta} + \fnfour(r) \,\partial_z
    \bigg).$$ 
    Hence the corresponding solution of the 3D Euler equation from Theorem \ref{simultaneous eigenvalue to euler solution}, with $\lambda=\frac{2m}{\alpha}$, is given by 
    \begin{multline*}
        U(t) = \partial_{\theta} + \rho \sin{( mz + n\theta +
        \freq t)} \big( m\beta J_n'(\beta r) + \tfrac{n\alpha}{r} \, J_n(\beta r)\big) \, \partial_r \\
    + \rho \cos{( mz + n\theta +
    \freq t)}
    \Big( \tfrac{1}{r^2} \big( \alpha \beta r J_n'(\beta r) + nm J_n(\beta r)\big) \, \partial_{\theta} - \beta^2 J_n(\beta r) \, \partial_z\Big),
    \end{multline*}
    for any constant $\rho\ge 0$, with $\freq = \frac{2m}{\alpha}-n$.
Kelvin originally derived these solutions in 1880 as solutions to the linearized Euler equation (i.e., valid only approximately for small $\rho$), and Dritschel~\cite{dritschel1991generalized} observed that they also solve the nonlinear Euler equation for any $\rho$.
\end{example}

We now show what happens in the twisted case where $c\ne 0$, with the same simple metric $\fnone(r)=r$.

\begin{example}\label{cnonzero}
Choose $\fnone(r)=r$ in \eqref{3disometriesmetric} with $c\ne 0$ and $0<\radiusone<\radiustwo$. For $n=0$ and $m\in\mathbb{N}$, the system \eqref{geqcmetric}--\eqref{cmetricbcs} becomes
\begin{align*}
    \fnthree'(r) &= \alpha r \fnfour(r) \\
    \fnfour'(r) - \frac{2c \fnthree(r)}{r^3} + mf(r) &= -\frac{\alpha \fnthree(r)}{r} 
\end{align*}
 with boundary conditions $g(\radiusone)=g(\radiustwo)=0$ and 
 $\alpha r \fnfive(r) = -m\fnthree(r)$. Eliminating $\fnfour$, we get 
$$ \fnthree''(r) = \frac{\fnthree'(r)}{r} + \left( \frac{2c\alpha}{r^2} + m^2 - \alpha^2\right) \fnthree(r), \qquad g(\radiusone)=g(\radiustwo)=0.$$
Solutions are given via Bessel functions: 
$$ \fnthree(r) = c_1 r J_{\nu}(k r) + c_2 r Y_{\nu}(kr), \qquad k^2 := \alpha^2 - m^2, \qquad \nu^2 := 2\alpha c+1, $$
and there are nontrivial solutions vanishing at $r=\radiusone$ and $r=\radiustwo$ if and only if 
$$ J_{\nu}(k\radiusone) Y_{\nu}(k\radiustwo) - J_{\nu}(k\radiustwo) Y_{\nu}(k\radiusone) = 0.$$
This becomes an equation to solve for the curl eigenvalue $\alpha$, given $m$ and $c$. 

For example if $c=-\tfrac{3}{10}$ and $m=1$ with $\radiusone=\tfrac{2\pi}{3}$ and $\radiustwo=2\pi$, then $\alpha = \tfrac{5}{4}$ works, since then $k=\tfrac{3}{4}$ and $\nu=\tfrac{1}{2}$, so that the Bessel functions become elementary. The corresponding solutions are multiples of 
$$ f(r) = -4r^{-1/2} \cos{(\tfrac{3r}{4})}, \qquad \fnthree(r) = 5r^{1/2} \cos{(\tfrac{3r}{4})}, \qquad  \fnfour(r) = 
-3r^{-1/2}\sin{(\tfrac{3r}{4})} + 2r^{-3/2} \cos{(\tfrac{3r}{4})}.
$$
Combining Theorem \ref{cmetriccurls} and Theorem \ref{simultaneous eigenvalue to euler solution} to get $\lambda = \frac{2m}{\alpha} = \frac{8}{5}$, we conclude that for any $\rho\ge 0$, 
\begin{multline*} u(t) = 
\partial_{\theta} + 4\rho r^{-1/2}\sin{\big( z + \tfrac{8}{5}t \big)} \cos{\big( \tfrac{3}{4} r\big)} \, \partial_r 
+ 5\rho r^{-3/2} \cos{\big( z + \tfrac{8}{5}t\big)} \cos{\big( \tfrac{3}{4} r\big)} \, \partial_{\theta} \\
+ \rho\cos{\big( z+\tfrac{8}{5} t\big)} \Big( -3r^{-1/2} \sin{\big(\tfrac{3}{4} r\big)} + 2r^{-3/2} \cos{\big( \tfrac{3}{4}r\big)} \Big) \Big( \partial_z + \tfrac{3}{10r^2} \, \partial_{\theta}\Big)
\end{multline*}
is  a non-steady solution of the 3D Euler equation on the solid torus $[\tfrac{2\pi}{3}, 2\pi]\times \mathbb{T}^2$ in the geometry 
$$ ds^2 = dr^2 + r^2 \big( d\theta - \tfrac{3}{10r^2} \, dz\big)^2 + dz^2.$$
This is probably the simplest explicit solution one could write down in such a geometry; for other values with $n=0$ we would get solutions in terms of Bessel functions, while for $n\ne 0$ (corresponding to solutions that are not axially symmetric), the ODE is much more complicated, though still in principle solvable.
\end{example}

\section{The Euler-Arnold Framework and Gerneralized Coriolis Force}\label{general framework}
The purpose of this section is to motivate Theorem~\ref{simultaneous eigenvalue to euler solution} and exhibit it as an application of a more general correspondence between Euler-Arnold dynamics and a class of modified equations incorporating a generalized Coriolis force.

As recalled in the introduction, the geometric approach to hydrodynamics initiated by Arnold~\cite{arnold2013differential} models the configuration space of an ideal fluid as the Lie group of volume-preserving diffeomorphisms. This group is endowed with a right-invariant Riemannian metric obtained by equipping its Lie algebra—the space of divergence-free vector fields—with the $L^2$ inner product corresponding to kinetic energy. When reduced to the Lie algebra, the associated geodesic equation coincides with the Euler equations of ideal hydrodynamics~\eqref{euler}. While this framework motivates our results, we now work in a more general setting.

Let $\G$ be a Lie group with Lie algebra $\g$ and identity element $e$. For $\sigma\in\G$, define the right and left translations
\begin{equation*}
R_\sigma(\xi)=\xi\circ\sigma, \qquad L_\sigma(\xi)=\sigma\circ\xi.
\end{equation*}
The adjoint action of $\G$ on $\g$ is defined by
\begin{equation*}
\Ad_\sigma v = d_\sigma R_{\sigma^{-1}}\, d_e L_\sigma\, v,
\end{equation*}
and the adjoint action of $\g$ on itself is given, for $u\in\g$, by
\begin{equation*}
\ad_u v = \left.\frac{d}{dt}\right|_{t=0} \Ad_{\sigma(t)} v,
\end{equation*}
where $\sigma(t)$ is any smooth curve in $\G$ with $\sigma(0)=e$ and $\dot\sigma(0)=u$.

If $\g$ is equipped with an inner product $\langle\cdot,\cdot\rangle$ we may extend this via right-translation to a Riemannian metric on $\G$ by
\begin{equation}\label{general right-invariant metric}
\ip{U,V}_\sigma = \ip{d_e R_\sigma^{-1} U, ~d_e R_\sigma^{-1} V}, \qquad U,V\in T_\sigma\G.
\end{equation}
We assume that this metric admits a well-defined Riemannian exponential map
\begin{equation*}
\exp_e : \mathcal U\subset\g\to\G
\end{equation*}
that is a diffeomorphism from a neighborhood of $0\in\g$ onto a neighborhood of $e\in\G$.

The corresponding coadjoint operators are defined by
\begin{equation*}
\ip{\Adstar_\sigma u, v} = \ip{u, \Ad_\sigma v}, \qquad \ip{\adstar_u v, w}= \ip{v, \ad_u w}, \quad \sigma\in\G, ~u,v,w\in\g.
\end{equation*}

\begin{remark}
    Note that for fixed $v\in \g$ the operator $u\mapsto\adstar_uv$ is skew-adjoint with respect to $\ip{\cdot,\cdot}$ and therefore has purely imaginary spectrum on the complexified Lie algebra $\C \otimes \g$. This observation will play a key role in the later construction of solutions.
\end{remark}

With these definitions, the geodesic equation on $\G$, when reduced to the Lie algebra, takes the form
\begin{align}\label{euler-arnold}
\begin{split}
    \partial_t u &= -\adstar_u u,\\
    u &= (d_e R_\gamma)^{-1}\dot\gamma,\\
    u(0) &= u_0,
\end{split}
\end{align}
where $\gamma(t)=\exp_e(tu_0)$ is the unique geodesic satisfying $\gamma(0)=e$ and $\dot\gamma(0)=u_0$. Equations of the form~\eqref{euler-arnold} are known as Euler--Arnold equations and imply the conservation law
\begin{equation}\label{general coadjoint conservation law}
    \Adstar_{\gamma(t)} u(t) = u_0.
\end{equation}

We now introduce the main correspondence underlying our construction of solutions. While the formulation may at first appear somewhat ad hoc, it will later be motivated by a natural perturbative argument.

\enlargethispage{-2\baselineskip}

\begin{theorem}\label{Euler and Euler-Coriolis correspondence}
    Let $\G$ be a Lie group with Lie algebra $\g$, equipped with a right-invariant metric $\ip{\cdot,\cdot}$ admitting a smooth Riemannian exponential map $\exp_e \colon \mathcal U \subset \g \to \G$ that is a diffeomorphism from a neighborhood of $0\in\g$ onto a neighborhood of $e\in\G$.
    Let $X \in \g$ be such that
    \begin{equation}\label{generalized Killing}
        \adstar_X = -\ad_X
    \end{equation}
    and denote its flow by $\gamma_X$. Then a curve in $\g$ given by
    \begin{equation}\label{equivalence}
        U(t) = X + \Ad_{\gamma_X(t)}V(t)
    \end{equation}
    solves the Euler-Arnold equations \eqref{euler-arnold} if and only if the field $V(t)$ solves
    \begin{equation}\label{euler-arnold-coriolis}
        \partial_t V = -\adstar_VV - \adstar_VX.
    \end{equation}
    Moreover, $U$ is stationary if and only if
    \begin{equation}\label{stationary condition}
        \partial_t V = \adstar_X V.
    \end{equation}
\end{theorem}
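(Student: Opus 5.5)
The plan is a direct computation: differentiate \eqref{equivalence} in $t$, substitute into $\partial_t U = -\adstar_U U$, and use \eqref{generalized Killing} to turn every term involving $X$ into an $\Ad_{\gamma_X}$-conjugate of a term involving only $V$. Here $\gamma_X(t)$ is the one-parameter subgroup generated by $X$, so that $\partial_t \Ad_{\gamma_X(t)} W = \ad_X \Ad_{\gamma_X(t)} W = \Ad_{\gamma_X(t)}\ad_X W$ for fixed $W$. It follows that
\begin{equation*}
\partial_t U = \Ad_{\gamma_X}\big(\ad_X V + \partial_t V\big).
\end{equation*}

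The first key step is an equivariance identity. Since $\adstar_X = -\ad_X$, the operator $\ad_X$ is skew-adjoint, and so $\Ad_{\gamma_X(t)} = e^{t\ad_X}$ is an isometry of $\ip{\cdot,\cdot}$. Exponentiating gives $\Adstar_{\gamma_X} = \Ad_{\gamma_X}^{-1} = \Ad_{\gamma_X^{-1}}$. From this, together with $\Ad_\sigma[a,b] = [\Ad_\sigma a, \Ad_\sigma b]$, I would derive
\begin{equation*}
\adstar_{\Ad_\sigma a}\,\Ad_\sigma b = \Ad_\sigma\,\adstar_a b \qquad \text{for } \sigma = \gamma_X(t).
\end{equation*}
To prove it, pair with an arbitrary $w$ and move $\Ad_\sigma$ across the inner product using orthogonality:
\begin{equation*}
\ip{\Ad_\sigma b, \ad_{\Ad_\sigma a} w} = \ip{\Ad_\sigma b, \Ad_\sigma \ad_a \Ad_\sigma^{-1} w} = \ip{b, \ad_a \Ad_\sigma^{-1} w} = \ip{\Ad_\sigma \adstar_a b, w}.
\end{equation*}
We also have $\Ad_{\gamma_X} X = X$.

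With these, expand
\begin{equation*}
-\adstar_U U = -\adstar_X X - \adstar_X \Ad_{\gamma_X} V - \adstar_{\Ad_{\gamma_X} V} X - \adstar_{\Ad_{\gamma_X} V}\Ad_{\gamma_X} V.
\end{equation*}
The four terms are handled as follows.
\begin{itemize}
\item The first term satisfies $\adstar_X X = -\ad_X X = 0$.
\item The second term is $\ad_X \Ad_{\gamma_X} V = \Ad_{\gamma_X}\ad_X V$.
\item For the third term, write $X = \Ad_{\gamma_X} X$ and apply the equivariance identity to get $\Ad_{\gamma_X}\adstar_V X$.
\item The fourth term similarly becomes $\Ad_{\gamma_X}\adstar_V V$.
\end{itemize}
Equating with $\partial_t U$, the $\ad_X V$ terms cancel. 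Applying the invertible $\Ad_{\gamma_X}^{-1}$ then shows that the Euler-Arnold equation is equivalent to \eqref{euler-arnold-coriolis}. Every step is an equality, so both directions follow at once.

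For stationarity, $\partial_t U = 0$ holds iff $\Ad_{\gamma_X}(\ad_X V + \partial_t V) = 0$, i.e.\ iff $\partial_t V = -\ad_X V = \adstar_X V$, which is \eqref{stationary condition}.

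The main obstacle is the equivariance step, specifically justifying that $\Ad_{\gamma_X(t)}$ is an isometry, i.e.\ passing from the infinitesimal skew-adjointness \eqref{generalized Killing} to the group level. I would argue via the ODE $\partial_t \ip{\Ad_{\gamma_X}a, \Ad_{\gamma_X}b} = \ip{\ad_X \Ad_{\gamma_X} a, \cdot} + \ip{\cdot, \ad_X \Ad_{\gamma_X} b} = 0$. This avoids needing $e^{t\ad_X}$ as a convergent series, which matters in the infinite-dimensional diffeomorphism setting. The sign conventions for $\ad$ versus the vector field bracket also need care, but they enter consistently.
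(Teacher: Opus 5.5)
Your proposal is correct and is essentially the paper's argument. The paper packages your two auxiliary facts into Lemma~\ref{general Killing lemma}: it shows $\Adstar_{\gamma_X(t)}\Ad_{\gamma_X(t)}=\mathrm{Id}$ by differentiating in $t$ and using $\adstar_X=-\ad_X$, then derives $\Ad_{\gamma_X}X=X$ and $\adstar_{\Ad_{\gamma_X}v}\Ad_{\gamma_X}w=\Ad_{\gamma_X}\adstar_v w$. It then does exactly your differentiation of $U$ and four-term expansion of $\adstar_U U$, reading off stationarity from $\partial_t U=\Ad_{\gamma_X}(\partial_t V-\adstar_X V)$. The differences are cosmetic: you get $\Ad_{\gamma_X}X=X$ from the one-parameter-subgroup property rather than the coadjoint conservation law, and you check equivariance by pairing against a test vector rather than transposing the homomorphism identity.
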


\begin{remark}\label{equivalence remark}
The correspondence in Theorem~\ref{Euler and Euler-Coriolis correspondence} shows that the Euler-Arnold equation \eqref{euler-arnold} and the modified equation \eqref{euler-arnold-coriolis} are equivalent up to a time-dependent change of variables. In particular, solutions of one system are in one-to-one correspondence with solutions of the other via \eqref{equivalence}. Thus, from a dynamical point of view, the two equations encode the same information: qualitative and quantitative properties of solutions (such as existence, regularity, or conserved quantities) may be transferred directly between them. In this sense, the additional term $\adstar_V X$ can be interpreted not as introducing genuinely new dynamics, but rather as expressing the Euler-Arnold equation in a rotating frame generated by $X$.
\end{remark}

\begin{remark}
    For ideal hydrodynamics, the condition $\adstar_X=-\ad_X$ is equivalent to requiring that $X$ be a Killing field on the fluid domain. In this case, one may verify (at least locally in time) that the flow generated by $U$ is obtained by composing the flow of $V$ with the isometric flow generated by $X$, i.e., $\gamma_U = \gamma_X \circ \gamma_V$. 
    On the round two-sphere, taking $X=\partial_\theta$, the term $\adstar_V X$ reduces to the classical Coriolis force $-2\Delta^{-1}\partial_\theta V$. This motivates our terminology of a \emph{generalized Coriolis force}. Related ideas, including a notion of magnetic force, appear for geodesic motion on central extensions of diffeomorphism groups, cf. \cite{arnold2021topological, marsden2007hamiltonian, vizman2008geodesicequations}. However here we work directly on the group and interpret the additional terms as arising from a change to a rotating frame generated by a Killing field.
\end{remark}

Before turning to the proof of Theorem~\ref{Euler and Euler-Coriolis correspondence}, we record a technical lemma concerning generalized Killing fields.

\begin{lemma}\label{general Killing lemma}
    Let $X \in g$ satisfy $\adstar_X = -\ad_X$ and denote its flow by $\gamma_X$. Then
    \begin{enumerate}
        \item[(i)] $X$ is a steady-state solution of the Euler-Arnold equation \eqref{euler-arnold}. \\
        \item[(ii)] $\Adstar_{\gamma_X(t)}X = \Ad_{\gamma_X(t)} X = X$. \\
        \item[(iii)] $\adstar_{\Ad_{\gamma_X(t)}v}\Ad_{\gamma_X(t)}w = \Ad_{\gamma_X(t)}\adstar_v w$ for all $v,w \in \g$. \\
        \item[(iv)] $\adstar_{X}\Ad_{\gamma_X(t)}v = \Ad_{\gamma_X(t)}\adstar_X v$ for all $v \in \g$. \\
        \item[(v)] $\adstar_{\Ad_{\gamma_X(t)}v} X = \Ad_{\gamma_X(t)}\adstar_v X$ for all $v \in \g$.
    \end{enumerate}
\end{lemma}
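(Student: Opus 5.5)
The plan is to derive everything from two facts: the Euler--Arnold equation \eqref{euler-arnold}, and the standard identity that $\Ad$ is a Lie algebra automorphism, $\Ad_\sigma[u,v]=[\Ad_\sigma u,\Ad_\sigma v]$, equivalently $\Ad_\sigma \ad_u \Ad_{\sigma^{-1}} = \ad_{\Ad_\sigma u}$. Throughout I write $\gamma_X(t)=\exp(tX)$ for the flow of $X$. For a one-parameter subgroup one has $\Ad_{\gamma_X(t)}=e^{t\ad_X}$.

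For (i), I pair $\adstar_X X$ with an arbitrary $w$. The hypothesis \eqref{generalized Killing} gives $\ip{\adstar_X X,w}=-\ip{\ad_X X,w}=0$, since $\ad_X X=0$. Hence $\adstar_X X=0$, and the constant curve $X$ solves \eqref{euler-arnold}.

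For (ii), I use $\Ad_{\gamma_X(t)}=e^{t\ad_X}$ together with $\ad_X X=0$ to get $\Ad_{\gamma_X(t)}X=X$. For the coadjoint version, the map $t\mapsto\Adstar_{\gamma_X(t)}$ satisfies $\Adstar_{\gamma_X(t)}=(e^{t\ad_X})^\star=e^{t\adstar_X}=e^{-t\ad_X}=\Ad_{\gamma_X(-t)}$. The key consequence is that $\Ad_{\gamma_X(t)}$ is an isometry of $\ip{\cdot,\cdot}$, because its adjoint equals its inverse. Applying this to $X$ gives $\Adstar_{\gamma_X(t)}X=X$. If one wants to avoid exponentials, the same conclusion follows by differentiating in $t$ and invoking uniqueness for the linear ODE.

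Parts (iii)--(v) all follow from this isometry property, which I expect to be the central point; the rest is bookkeeping. Write $A=\Ad_{\gamma_X(t)}$, with $A^\star=A^{-1}$. For (iii), take any $z$ and compute
\[
\ip{\adstar_{Av}Aw,z}=\ip{Aw,\ad_{Av}z}=\ip{Aw,A\,\ad_v A^{-1}z}=\ip{w,\ad_vA^{-1}z}=\ip{A\,\adstar_v w,z}.
\]
The second equality uses the automorphism property, and the third and fourth use $A^\star=A^{-1}$. For (iv), I apply (iii) with $v=X$ and use $AX=X$ from (ii). For (v), I apply (iii) with $w=X$ and again use $AX=X$.

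The main obstacle is making the isometry claim $\Adstar_{\gamma_X(t)}=\Ad_{\gamma_X(t)}^{-1}$ rigorous in the infinite-dimensional setting, where exponentiating $\ad_X$ may not be literally available. My approach there is to set $\phi(t)=\ip{\Ad_{\gamma_X(t)}u,\Ad_{\gamma_X(t)}v}$ and differentiate, using $\frac{d}{dt}\Ad_{\gamma_X(t)}=\ad_X\Ad_{\gamma_X(t)}$. This gives $\phi'(t)=\ip{\ad_X a,b}+\ip{a,\ad_X b}=0$ by skew-adjointness of $\ad_X$, where $a,b$ denote the transported vectors. Hence $\phi$ is constant, and that is exactly the isometry property needed above.
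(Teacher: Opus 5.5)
Your proposal is correct and follows essentially the paper's route: your computation of $\phi'(t)$ is the paper's argument that $\Lambda_X(t)=\Adstar_{\gamma_X(t)}\Ad_{\gamma_X(t)}$ has zero derivative and hence equals the identity, and (iii)--(v) then come from the $\Ad$-homomorphism identity exactly as you derive them. The only minor difference is in (ii): the paper gets $\Adstar_{\gamma_X(t)}X=X$ from the coadjoint conservation law \eqref{general coadjoint conservation law} and then deduces $\Ad_{\gamma_X(t)}X=X$, whereas you start from the elementary invariance $\Ad_{\gamma_X(t)}X=X$ and use the isometry property to obtain $\Adstar_{\gamma_X(t)}X=X$.
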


\begin{proof}
    The first assertion follows immediately from $\ad^*_X X = -\ad_X X = 0$. We recall that the adjoint action $\Ad$ satisfies the homomorphism identity
    \begin{equation}
        \Ad_\sigma\,\ad_u v = \ad_{\Ad_\sigma u}\,\Ad_\sigma v, \qquad \sigma\in\G,~ u,v\in\g
    \end{equation}
    and, if $\sigma(t)$ is a smooth curve in $\G$ with associated velocity $\partial_t \sigma(t) = d_e R_{\sigma(t)} u(t)$, then the derivative of the adjoint action is given by
    \begin{equation}
        \frac{d}{dt}\Ad_{\sigma(t)} v = \ad_{u(t)}\,\Ad_{\sigma(t)} v, \qquad v\in\g.
    \end{equation}

    Taking adjoints with respect to the right-invariant metric yields the corresponding identities for the coadjoint action
    \begin{equation}\label{Adstar homomorphism}
    \Adstar_\sigma\,\adstar_{\Ad_\sigma u} v = \adstar_u\,\Adstar_\sigma v, \qquad \sigma\in\G,~ u,v\in\g,
    \end{equation}
    and
    \begin{equation}\label{Adstar derivative}
        \frac{d}{dt}\Adstar_{\sigma(t)} v = \Adstar_{\sigma(t)}\,\adstar_{u(t)} v, \qquad v\in\g.
    \end{equation}
    Next, for each $v \in \g$, we define the operator
    \begin{equation}
        \Lambda_v(t) = \Adstar_{\exp_e(tv)} \Ad_{\exp_e(tv)}.
    \end{equation}
    In particular, $\Lambda_X(t) = \Adstar_{\gamma_X(t)}\Ad_{\gamma_X(t)}$ and we have
    \begin{align*}
        \frac{d}{dt} \Lambda_X(t) v &= \frac{d}{dt} \left( \Adstar_{\gamma_X(t)}\Ad_{\gamma_X(t)} v \right) \\
        &= \Adstar_{\gamma_X(t)}\adstar_X \Ad_{\gamma_X(t)} v + \Adstar_{\gamma_X(t)}\ad_X \Ad_{\gamma_X(t)} v \\
        &= - \Adstar_{\gamma_X(t)}\ad_X \Ad_{\gamma_X(t)} v + \Adstar_{\gamma_X(t)}\ad_X \Ad_{\gamma_X(t)} v \\
        &= 0.
    \end{align*}
    Hence, as $\Lambda_X(0) = \mathrm{Id}$, we have $\Lambda_X(t) = \mathrm{Id}$ for all $t$. Combining this with the coadjoint conservation law \eqref{general coadjoint conservation law}
    \begin{equation*}
        \Adstar_{\gamma_X(t)} X = X
    \end{equation*}
    yields the second assertion of the lemma.

    The remaining assertions follow directly from \eqref{Adstar homomorphism}, the second assertion and the identity $\Lambda_X(t) = \Adstar_{\gamma_X(t)}\Ad_{\gamma_X(t)} = \mathrm{Id}$.
\end{proof}

With these preparations in place, we now prove Theorem~\ref{Euler and Euler-Coriolis correspondence}.

\begin{proof}[Proof of Theorem~\ref{Euler and Euler-Coriolis correspondence}]
    Suppressing $t$, we begin by taking a derivative in time of
    \begin{equation*}
        U(t) = X + \Ad_{\gamma_X}V
    \end{equation*}
    which gives
    \begin{equation*}
        \partial_t U = \Ad_{\gamma_X}\partial_tV + \ad_X \Ad_{\gamma_X}V.
    \end{equation*}
    From Lemma \ref{general Killing lemma} we have
    \begin{equation*}
        \ad_X \Ad_{\gamma_X(t)}V(t) = - \adstar_X \Ad_{\gamma_X(t)}V(t) = -\Ad_{\gamma_X(t)}\adstar_XV(t)
    \end{equation*}
    and hence we have
    \begin{equation*}
        \partial_t U = \Ad_{\gamma_X} \big( \partial_tV - \adstar_X V \big).
    \end{equation*}
    Note that this immediately gives the stationary condition \eqref{stationary condition}. Next, we compute the right-hand side of the Euler-Arnold equation \eqref{euler-arnold}
    \begin{align*}
        -\adstar_UU &= -\adstar_{X + \Ad_{\gamma_X}V}\big(X + \Ad_{\gamma_X}V\big) \\
        &= -\adstar_XX - \adstar_X\Ad_{\gamma_X}V - \adstar_{\Ad_{\gamma_X}V}X - \adstar_{\Ad_{\gamma_X}V}\Ad_{\gamma_X}V.
    \end{align*}
    Noting again that $-\adstar_XX = \ad_XX = 0$ and using Lemma \ref{general Killing lemma} we have
    \begin{equation*}
        -\adstar_UU = -\Ad_{\gamma_X} \big( -\adstar_XV -\adstar_VX - \adstar_VV \big).
    \end{equation*}
    Combing this with the above, we see that
    \begin{equation*}
        \partial_tU = -\adstar_UU
    \end{equation*}
    if and only if
    \begin{equation*}
        \partial_tV = - \adstar_VV -\adstar_VX
    \end{equation*}
    as claimed.
\end{proof}

We now outline a mechanism for constructing solutions of~\eqref{euler-arnold-coriolis}, and hence solutions to the Euler–Arnold equation~\eqref{euler-arnold}. The spectrum of the operator $\ef \mapsto \adstar_\ef X$ naturally provides candidates for such solutions. Since it is skew-adjoint, its eigenvalues are purely imaginary and its eigenvectors generally lie in the complexified Lie algebra. This leads us to consider complex-valued solutions of~\eqref{euler-arnold-coriolis}. The following theorem shows that, under a simple algebraic condition, the real part of such a solution again yields a genuine real solution.

\begin{theorem}\label{complex EAC to real EA}
    Suppose that for some $T>0$ there exist $X\in\g$ and a curve $\cecs \colon [0,T)\to \C\otimes\g$ in the complexification of the Lie algebra solving \eqref{euler-arnold-coriolis}. If $\cecs(t)=V(t)+iW(t)$ with $V(t),W(t)\in\g$ such that
    \begin{equation}
        \adstar_{W}W = 0
    \end{equation}
    then $V \colon [0,T)\to \g$ is a real-valued solution of \eqref{euler-arnold-coriolis}, and hence
    \begin{equation}
        U(t) = X + \Ad_{\gamma_X(t)}V(t)
    \end{equation}
    solves the Euler-Arnold equations \eqref{euler-arnold}. Furthermore
    \begin{equation*}
        Y(t) = \Ad_{\gamma_X(t)} W(t)
    \end{equation*}
    solves the corresponding linearized equation~\eqref{linearized Euler-Arnold} along $U(t)$.
\end{theorem}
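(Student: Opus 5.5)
The plan is to split the complexified equation \eqref{euler-arnold-coriolis} into real and imaginary parts, exactly as in the proof of Theorem~\ref{simultaneous eigenvalue to euler solution}. First I would extend $\ad$, $\adstar$ and $\Ad_{\gamma_X(t)}$ complex-bilinearly (resp.\ complex-linearly) to $\C\otimes\g$. This extension is how the statement must be read, since \eqref{euler-arnold-coriolis} is quadratic in $\cecs$. With $\cecs=V+iW$ and $X$ real, bilinearity gives
\begin{equation*}
    \adstar_{\cecs}\cecs = \adstar_V V - \adstar_W W + i\big(\adstar_V W + \adstar_W V\big), \qquad \adstar_{\cecs}X = \adstar_V X + i\,\adstar_W X .
\end{equation*}
Since $V$, $W$ and $X$ are real, each of these terms lies in $\g$.

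Next I would compare real and imaginary parts in $\partial_t\cecs = -\adstar_{\cecs}\cecs - \adstar_{\cecs}X$. This gives two real equations:
\begin{equation*}
    \partial_t V = -\adstar_V V + \adstar_W W - \adstar_V X, \qquad \partial_t W = -\adstar_V W - \adstar_W V - \adstar_W X .
\end{equation*}
The hypothesis $\adstar_W W=0$ removes the extra term in the first equation, so $V$ solves \eqref{euler-arnold-coriolis} on $[0,T)$. Then Theorem~\ref{Euler and Euler-Coriolis correspondence} applies directly; note that we must assume, as in that theorem, that $X$ satisfies \eqref{generalized Killing}. It shows that $U=X+\Ad_{\gamma_X}V$ solves \eqref{euler-arnold}.

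For the linearized statement, the second real equation says that $W$ solves the linearization of \eqref{euler-arnold-coriolis} at $V$. This is the map $W\mapsto -\adstar_V W-\adstar_W V-\adstar_W X$, i.e.\ the derivative of the right-hand side. I would transport this to $U$ in the same way as the proof of Theorem~\ref{Euler and Euler-Coriolis correspondence}, setting $Y=\Ad_{\gamma_X}W$.

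For the time derivative, Lemma~\ref{general Killing lemma}(iv) together with $\ad_X=-\adstar_X$ gives
\begin{equation*}
    \partial_t Y=\Ad_{\gamma_X}\big(\partial_t W-\adstar_X W\big).
\end{equation*}
For the right-hand side, the linearized Euler-Arnold operator at $U$ applied to $Y$ is $-\adstar_U Y-\adstar_Y U$. Expanding with $U=X+\Ad_{\gamma_X}V$ and using Lemma~\ref{general Killing lemma}(iii)--(v) gives
\begin{equation*}
    -\adstar_U Y-\adstar_Y U=\Ad_{\gamma_X}\big(-\adstar_X W-\adstar_V W-\adstar_W V-\adstar_W X\big).
\end{equation*}
The $-\adstar_X W$ terms on the two sides cancel, so $\partial_t Y=-\adstar_U Y-\adstar_Y U$ is equivalent to the equation for $W$ above.

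The main obstacle I expect is bookkeeping rather than substance. I need to confirm that the linearized equation \eqref{linearized Euler-Arnold}, referenced but not yet displayed, has precisely the form $\partial_t Y=-\adstar_U Y-\adstar_Y U$; if it is instead written on the group as a Jacobi equation, I would reduce it to this form first. I also need to check that the complexified $\adstar$ is the bilinear extension and not the Hermitian adjoint. The bilinear choice is the one consistent with treating \eqref{euler-arnold-coriolis} as an equation for a $\C\otimes\g$-valued curve.
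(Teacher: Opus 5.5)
Your proposal is correct and matches the paper's proof essentially step for step. You split $\partial_t\cecs=-\adstar_{\cecs}\cecs-\adstar_{\cecs}X$ into real and imaginary parts, use $\adstar_WW=0$ to obtain the equation for $V$, and then use Lemma~\ref{general Killing lemma} with $\adstar_X=-\ad_X$ to reduce $\partial_tY+\adstar_UY+\adstar_YU$ to $\Ad_{\gamma_X}$ of the imaginary-part equation. You are also right that the hypothesis $\adstar_X=-\ad_X$ is needed: the statement omits it, but the paper's proof uses it without saying so.
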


\begin{proof}
    Taking the real and imaginary parts of
    \begin{equation*}
        \partial_t\cecs = - \adstar_\cecs\cecs - \adstar_\cecs X
    \end{equation*}
    gives
    \begin{equation}\label{real and imaginary EAC split}
        \begin{split}
            \partial_tV &= -\adstar_VV + \adstar_WW -\adstar_VX,\\
            \partial_tW &= -\adstar_VW -\adstar_WV - \adstar_WX.
        \end{split}
    \end{equation}
    Hence, the condition $\adstar_WW=0$ forces that $V$ is a real solution to \eqref{euler-arnold-coriolis} and the first assertion of the theorem follows.

    For the second, the linearized Euler-Arnold equation\footnote{A derivation of this can be found in \cite{misiolek2010fredholm}, however we rederive this during our perturbative argument later on.} along a solution $U$ is given by
    \begin{equation}\label{linearized EA}
        \partial_t Y + \adstar_UY + \adstar_YU = 0.
    \end{equation}
    Replacing $U = X + \Ad_{\gamma_X}V$ and $Y = \Ad_{\gamma_X}W$ in the left-hand side above yields
    \begin{align*}
        &\partial_t \big(\Ad_{\gamma_X}W\big) + \adstar_{X+ \Ad_{\gamma_X}V}\Ad_{\gamma_X}W + \adstar_{\Ad_{\gamma_X}W}\big(X + \Ad_{\gamma_X}V\big) \\
        &\quad = \Ad_{\gamma_X}\partial_t W + \ad_X\Ad_{\gamma_X}W + \adstar_X\Ad_{\gamma_X}W \\
        &\quad \qquad + \adstar_{\Ad_{\gamma_X}V}\Ad_{\gamma_X}W + \adstar_{\Ad_{\gamma_X}W}X + \adstar_{\Ad_{\gamma_X}W}\Ad_{\gamma_X}V \\
        &\quad = \Ad_{\gamma_X} \big( \partial_t W + \adstar_VW + \adstar_WV + \adstar_WX\big)
    \end{align*}
    where again we have used the fact that $\adstar_X = - \ad_X$ and Lemma \ref{general Killing lemma}. In light of \eqref{real and imaginary EAC split}, we see that $Y=\Ad_{\gamma_X}W$ indeed satisfies \eqref{linearized EA} with initial condition $Y(0) = W(0)$.
\end{proof}

We now show how an eigenfield of the map $\ef \mapsto \adstar_{\ef}X$ can be converted into a real solution of the Euler–Arnold equation. This result is the main algebraic ingredient in the explicit constructions presented in Sections~\ref{2D} and~\ref{3D}.

\begin{theorem}\label{eigenvalue to euler-arnold solution}
    Let $X\in\g$ with $\adstar_X = -\ad_X$. If there exists ${\ef} \in \C\otimes\g$ and $\kev \in \R$ such that
    \begin{equation*}
        \adstar_{{\ef}} X = -i \kev {\ef}~, \quad \text{and} \quad \adstar_{{\ef}} {\ef} = 0,
    \end{equation*}
    then, for $\rho \geq 0$ and $\sigma \in \R$, the curve
    \begin{equation}
        \cecs(t) = \rho e^{i(\sigma + \kev t)}{\ef}
    \end{equation}
    defines a complex-valued solution of \eqref{euler-arnold-coriolis}. Moreover, if ${\ef} = v + iw$, with $v,w \in \g$ and
    \begin{equation*}
        \adstar_ww=0
    \end{equation*}
    then the curves
    \begin{equation*}
        U(t) = X + \rho \Ad_{\gamma_X(t)} \big(\cos(\sigma + \kev t)v - \sin(\sigma + \kev t)w\big)
    \end{equation*}
    and
    \begin{equation*}
        Y(t) = \rho \Ad_{\gamma_X(t)} \big(\sin(\sigma + \kev t)v + \cos(\sigma + \kev t)w\big)
    \end{equation*}
    are real solutions to the Euler-Arnold equation \eqref{euler-arnold} and its linearization \eqref{linearized Euler-Arnold} respectively, where again $\gamma_X$ denotes the flow of $X$.

    Lastly, $U$ is stationary if and only if
    \begin{equation*}
        \adstar_X{\ef} = i \kev {\ef}.
    \end{equation*}
\end{theorem}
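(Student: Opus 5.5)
The plan is to verify the complex equation by direct substitution, and then obtain the real statements from Theorem~\ref{complex EAC to real EA} and the stationarity statement from Theorem~\ref{Euler and Euler-Coriolis correspondence}.

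\textbf{Step 1: the complex solution.} Throughout, $\adstar$ is extended complex-bilinearly to $\C\otimes\g$. Set $\cecs(t)=\rho e^{i(\sigma+\kev t)}\ef$. Then
\begin{equation*}
\partial_t\cecs = i\kev\,\cecs .
\end{equation*}
For the right-hand side of \eqref{euler-arnold-coriolis}, bilinearity gives
\begin{equation*}
\adstar_\cecs\cecs=\rho^2e^{2i(\sigma+\kev t)}\adstar_\ef\ef=0 .
\end{equation*}
Linearity in the subscript gives
\begin{equation*}
\adstar_\cecs X=\rho e^{i(\sigma+\kev t)}\adstar_\ef X=-i\kev\,\cecs .
\end{equation*}
Hence $-\adstar_\cecs\cecs-\adstar_\cecs X=i\kev\cecs=\partial_t\cecs$, so $\cecs$ solves \eqref{euler-arnold-coriolis} on $\C\otimes\g$.

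\textbf{Step 2: the real solutions.} Write $\cecs=V+iW$ with
\begin{equation*}
V=\rho\big(\cos(\sigma+\kev t)v-\sin(\sigma+\kev t)w\big),\qquad W=\rho\big(\sin(\sigma+\kev t)v+\cos(\sigma+\kev t)w\big).
\end{equation*}
To apply Theorem~\ref{complex EAC to real EA} we need $\adstar_W W=0$ for all $t$. This is the main obstacle, because $W$ mixes $v$ and $w$; the hypothesis $\adstar_ww=0$ only controls $W$ at those times where $W\parallel w$.

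To handle it, take real and imaginary parts of $\adstar_\ef\ef=0$:
\begin{equation*}
\adstar_vv=\adstar_ww,\qquad \adstar_vw+\adstar_wv=0 .
\end{equation*}
Combined with $\adstar_ww=0$, this gives $\adstar_vv=0$ as well as $\adstar_vw+\adstar_wv=0$. Expanding $\adstar_WW$ bilinearly, it equals $\rho^2$ times
\begin{equation*}
\sin^2(\cdot)\,\adstar_vv+\sin(\cdot)\cos(\cdot)\big(\adstar_vw+\adstar_wv\big)+\cos^2(\cdot)\,\adstar_ww ,
\end{equation*}
and every term vanishes. Theorem~\ref{complex EAC to real EA} then shows that $U=X+\Ad_{\gamma_X}V$ solves \eqref{euler-arnold} and that $Y=\Ad_{\gamma_X}W$ solves its linearization. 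These are exactly the displayed formulas for $U$ and $Y$.

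\textbf{Step 3: stationarity.} By \eqref{stationary condition}, $U$ is stationary iff $\partial_tV=\adstar_XV$. Since $\partial_t\cecs=i\kev\cecs$ and $\adstar_X$ is real-linear, this is the real part of the complex identity
\begin{equation*}
i\kev\cecs=\adstar_X\cecs .
\end{equation*}
That identity is equivalent to $\adstar_X\ef=i\kev\ef$ after cancelling the nonzero scalar $\rho e^{i(\cdot)}$ (we take $\rho>0$; for $\rho=0$, $U=X$ is trivially stationary).

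The remaining point is passing from the real part back to the full complex identity. Suppose the real part $\partial_tV=\adstar_XV$ holds for all $t$. Differentiating in $t$ and using the phase rotation, the imaginary part $\partial_tW=\adstar_XW$ follows as well: $W(t)$ is $V$ evaluated at phase shifted by $\pi/2$, i.e.\ $W(t)=-V(t-\tfrac{\pi}{2\kev})$ when $\kev\neq0$. When $\kev=0$, evaluate the real part at $\sigma$ and $\sigma+\pi/2$ directly. Conversely, the complex identity trivially gives its real part. This completes the plan.
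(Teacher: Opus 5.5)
Your Steps 1 and 2 coincide with the paper's proof. The complex curve is checked by bilinearity, and $\adstar_\ef\ef=0$ is split into $\adstar_vv=\adstar_ww$ and $\adstar_vw+\adstar_wv=0$. The extra hypothesis $\adstar_ww=0$ then kills every term of $\adstar_WW$, and Theorem~\ref{complex EAC to real EA} does the rest. Step 3 recasts the paper's comparison of $\cos$ and $\sin$ coefficients in complex form, and it is fine for $\rho>0$ and $\kev\neq0$.

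There is one harmless sign slip: $W(t)=V(t-\tfrac{\pi}{2\kev})$, not $-V(t-\tfrac{\pi}{2\kev})$. This does not matter because $\partial_tV=\adstar_XV$ is linear and autonomous. A slightly more robust route is to note that $\partial_tV-\adstar_XV=\rho\,\mathrm{Re}\left(e^{i(\sigma+\kev t)}(i\kev\ef-\adstar_X\ef)\right)$. For $\kev\neq0$ this vanishes on a time interval only if $\adstar_X\ef=i\kev\ef$, so you never need $t$ and $t-\tfrac{\pi}{2\kev}$ to both lie in the domain.

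The step that fails is your treatment of $\kev=0$. The phase $\sigma$ is a fixed parameter of the curve $U$. Stationarity of that curve therefore gives only $\adstar_X(\cos\sigma\,v-\sin\sigma\,w)=0$. ``Evaluating at $\sigma+\pi/2$'' is a statement about a different curve, about which you have no information.

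This cannot be patched, because the ``only if'' direction is false when $\kev=0$. Take $\ef=iw$ with $w\in\g$ such that $\adstar_ww=\adstar_wX=0$ but $[X,w]\neq0$, and take $\sigma=0$. A concrete instance is the flat disk with $X=\partial_\theta$: there $\Delta X=0$, so $\adstar_wX=\Delta^{-1}[w,\Delta X]=0$, and $w$ can be any real non-axisymmetric Laplace eigenfield. All hypotheses hold with $\kev=0$, and $V\equiv0$, so $U\equiv X$ is stationary. Yet $\adstar_X\ef=i[X,w]\neq0$.

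The criterion should therefore be stated for $\rho>0$ and $\kev\neq0$. When $\kev=0$, it characterizes stationarity for all phases $\sigma$ simultaneously. The paper's own proof makes the same tacit restriction: its ``vanishes if and only if'' step treats $\cos(\sigma+\kev t)$ and $\sin(\sigma+\kev t)$ as linearly independent functions of $t$, which fails exactly when $\kev=0$.
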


\begin{proof}
    The fact that $\cecs(t)=\rho e^{i(\sigma + \kev t)}{\ef}$ is a complex-valued solution of \eqref{euler-arnold-coriolis} is immediate.
    Writing $\cecs(t) = V(t) + iW(t)$, a direct computation gives 
    \begin{equation*}
        V(t) = \rho\big(\cos(\sigma + \kev t)v - \sin(\sigma + \kev t)w\big), \quad W(t) = \rho \big(\sin(\sigma + \kev t)v + \cos(\sigma + \kev t)w\big).
    \end{equation*}
    The condition that $\adstar_{\ef}{\ef}=0$ tells us that
    \begin{equation}
        \begin{split}
            &\adstar_vv - \adstar_ww = 0 \\
            &\adstar_vw + \adstar_wv = 0
        \end{split}
    \end{equation}
    The additional assumption $\adstar_w w = 0$ therefore gives $\adstar_vv=0$. From this we have
    \begin{equation*}
        \adstar_{W(t)}W(t) = \rho^2 \Big( \sin^2(\sigma + \kev t)\adstar_vv + \sin(\sigma + \kev t)\cos(\sigma + \kev t) \big(\adstar_vw + \adstar_wv\big) + \cos^2(\sigma + \kev t)\adstar_ww \Big) =0.
    \end{equation*}
    which, in light of Theorem \ref{complex EAC to real EA} establishes the first assertion of the theorem.

    For the stationarity condition, recall from Theorem \ref{Euler and Euler-Coriolis correspondence}, that $U(t) = X + \Ad_{\gamma_X}V$ is stationary if and only if $\partial_tV = \adstar_XV$. We compute the difference directly
    \begin{align*}
        \partial_tV - \adstar_XV &= \rho \Big( -\kev \sin(\sigma+\kev t) v -\kev \cos(\sigma+\kev t) - \cos(\sigma+\kev t)\adstar_Xv + \sin(\sigma+\kev t)\adstar_Xw\Big) \\
        &= \rho \Big(-\cos(\sigma+\kev t) \big(\adstar_Xv + \kev w\big) + \sin(\sigma+\kev t)\big(\adstar_Xw - \kev v\big)\Big).
    \end{align*}
    We can the above vanishes if and only if
    \begin{equation*}
        \adstar_Xv = -\kev w, \quad \text{and} \quad \adstar_Xw = \kev v
    \end{equation*}
    which is equivalent to the condition
    \begin{equation*}
        \adstar_X{\ef} = i \kev {\ef}.
    \end{equation*}
\end{proof}

\enlargethispage{2\baselineskip}

\begin{remark}
    In order to see Theorem \ref{simultaneous eigenvalue to euler solution} as a special instance of Theorem \ref{eigenvalue to euler-arnold solution}, note that, in the context of ideal hydrodynamics,
    \begin{equation*}
        \adstar_vw = \inert^{-1}[v, \inert w] \quad \text{for } v,w \in \ff.
    \end{equation*}
    Hence, the condition $\adstar_{\ef}X = - i \kev {\ef}$ is equivalent to $K_X{\ef} = - i \kev {\ef}$, while $\inert {\ef} = \iev {\ef}$ implies both $\adstar_{\ef}{\ef} = 0$ and $\adstar_ww = 0$.
    The added assumption of $[X,{\ef}] = -\ad_X {\ef} = i \aev {\ef}$ integrates to $\Ad_{\gamma_X(t)}{\ef} = e^{-i\aev t}{\ef}$ and notably avoids the need for $X$ to be Killing, although all examples in Sections \ref{2D} and \ref{3D} fall into this category. Finally, the stationarity condition $\adstar_X {\ef} = i\kev {\ef}$ in Theorem~\ref{eigenvalue to euler-arnold solution} becomes $[X,\inert {\ef}] = i\kev \inert {\ef}$ which, under the inertia eigenvalue condition $\inert {\ef} = \iev {\ef}$, is equivalent to $[X,{\ef}] = i\kev {\ef}$. This recovers the condition $\kev=\aev$ appearing in Theorem~\ref{simultaneous eigenvalue to euler solution}.
\end{remark}

\begin{remark}\label{euler-arnold trivial solutions}
    Recall that the flow of $U$, the real solution of the Euler-Arnold equation \eqref{euler-arnold}, and the flow of $V$, the corresponding real solution to \eqref{euler-arnold-coriolis} are related by the equation
    \begin{equation*}
        \gamma_{U} = \gamma_X \circ \gamma_V.
    \end{equation*}
    We note now that it may occur that $U$ is a non-stationary solution of \eqref{euler-arnold}, while also having that $V$ is a stationary solution of \eqref{euler-arnold-coriolis}. In this situation $U$ can be interpreted as a trivial solution in a moving frame generated by $X$. However, from the above proof, we see that
    \begin{equation*}
        \partial_t V = 0 \iff \kev = 0. 
    \end{equation*}
    Accordingly, eigenfields $\ef$ with $\kev = 0$ produce solutions that are trivial in this moving-frame sense, whereas $\kev \neq 0$ yields solutions $U$ that are genuinely non-trivial.
\end{remark}

We now motivate the preceding construction via a perturbative expansion of the Euler–Arnold equation. Given $u_0\in\g$, let $\gamma(t)=\exp_e(tu_0)$ denote the corresponding geodesic, which exists on some time interval $[0,T)$. As before, we define the Eulerian velocity $u(t)\in\g$ by $\partial_t\gamma(t)=d_eR_{\gamma(t)}u(t)$. We perturb the initial data by $w_0\in T_{u_0}\g$ and consider the two-parameter family of geodesics
\begin{equation*}
\Gamma(t,s)=\exp_e\big(t(u_0+sw_0)\big).
\end{equation*}
Let $U(t,s)\in\g$ denote the associated Eulerian velocity, defined by
\begin{equation*}
    \partial_t\Gamma(t,s)=d_eR_{\Gamma(t,s)}U(t,s).
\end{equation*}
We assume formally that $U$ admits a power-series expansion in $s$,
\begin{equation}\label{expansion}
U(t,s)=\sum_{n=0}^\infty X_n(t)s^n,
\end{equation}
where each $X_n(t)$ is a time-dependent element of $\g$.

\begin{lemma}
The coefficients $\{X_n(t)\}$ in~\eqref{expansion} satisfy
\begin{equation}\label{X_n evolution}
\frac{dX_n}{dt}+\sum_{k=0}^n \adstar_{X_k}X_{n-k}=0,
\end{equation}
with initial conditions
\begin{equation*}
    X_0(t)=u(t), \qquad X_1(0)=w_0, \qquad X_n(0)=0 \ \text{for } n\geq2.
\end{equation*}
\end{lemma}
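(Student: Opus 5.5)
The plan is to substitute the formal expansion \eqref{expansion} into the Euler--Arnold equation \eqref{euler-arnold} and match powers of $s$. For each fixed $s$, the curve $t\mapsto\Gamma(t,s)=\exp_e\big(t(u_0+sw_0)\big)$ is a geodesic from $e$ with initial velocity $u_0+sw_0$. Hence its Eulerian velocity $U(t,s)$ solves
\begin{equation*}
\partial_t U(t,s) = -\adstar_{U(t,s)}U(t,s), \qquad U(0,s) = u_0 + s w_0 .
\end{equation*}
This is simply \eqref{euler-arnold} applied with initial datum $u_0+sw_0$, and it uses the standing assumption that $\exp_e$ is defined near $0$, so that these geodesics exist on a common interval $[0,T)$ for $s$ small.

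First I would expand the nonlinear term. The map $(a,b)\mapsto \adstar_a b$ is bilinear, since $\ad$ is bilinear and taking adjoints preserves linearity in each slot. Therefore
\begin{equation*}
\adstar_{U}U = \sum_{j,l\ge 0} s^{j+l}\,\adstar_{X_j}X_l = \sum_{n=0}^\infty s^n \sum_{k=0}^n \adstar_{X_k}X_{n-k},
\end{equation*}
which is the Cauchy product of the two series. Differentiating \eqref{expansion} termwise in $t$ gives $\partial_t U=\sum_n s^n \tfrac{dX_n}{dt}$. Equating coefficients of $s^n$ in $\partial_t U + \adstar_U U = 0$ then yields \eqref{X_n evolution} for every $n\ge 0$.

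For the initial conditions, set $t=0$ in \eqref{expansion}. This gives $\sum_n X_n(0)s^n = u_0+sw_0$, so $X_0(0)=u_0$, $X_1(0)=w_0$, and $X_n(0)=0$ for $n\ge2$. Finally, at $s=0$ we have $\Gamma(t,0)=\gamma(t)$, so $X_0(t)=U(t,0)=u(t)$. This agrees with the $n=0$ equation, which is exactly \eqref{euler-arnold} for $X_0$.

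The only delicate point is justifying the termwise operations: differentiating the series in $t$, rearranging the double series into a Cauchy product, and using uniqueness of power-series coefficients. Since the lemma is stated under the formal assumption that $U$ admits the expansion \eqref{expansion}, I would treat these as formal power-series identities in $s$. Alternatively, if $U$ is smooth in $s$ (as holds by smooth dependence on initial data, given smoothness of $\exp_e$), one can identify $X_n = \tfrac{1}{n!}\partial_s^n U|_{s=0}$. Applying $\tfrac{1}{n!}\partial_s^n|_{s=0}$ to the equation and using the Leibniz rule for the bilinear term gives the same recursion rigorously, without any convergence issues.
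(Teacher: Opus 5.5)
Your proposal is correct and matches the paper's proof: substitute the expansion into the Euler--Arnold equation, use the bilinearity of $\adstar$ to reindex the double sum as a Cauchy product, equate coefficients of $s^n$, and read off the initial data from $U(t,0)=u(t)$ and $U(0,s)=u_0+sw_0$. Your extra remark, identifying $X_n=\tfrac{1}{n!}\partial_s^n U|_{s=0}$ and applying the Leibniz rule, is a harmless refinement that makes the formal termwise computation rigorous.
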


\begin{proof}
Substituting the expansion~\eqref{expansion} into the Euler--Arnold equation~\eqref{euler-arnold} and using the linearity of $\adstar$ yields
\begin{equation*}
    \sum_{n=0}^\infty \frac{dX_n}{dt}s^n + \sum_{i=0}^\infty\sum_{j=0}^\infty \adstar_{X_i}X_j\, s^{i+j}=0.
\end{equation*}
Reindexing the second sum gives
\begin{equation*}
    \sum_{n=0}^\infty \frac{dX_n}{dt}s^n + \sum_{n=0}^\infty\sum_{k=0}^n \adstar_{X_k}X_{n-k}\, s^n=0.
\end{equation*}
Equating coefficients of $s^n$ yields~\eqref{X_n evolution}. The initial conditions follow from
\begin{equation*}
    U(t,0)=u(t), \qquad U(0,s)=u_0+sw_0.
\end{equation*}
\end{proof}

For $n=1$, equation~\eqref{X_n evolution} becomes
\begin{equation}\label{linearized Euler-Arnold}
    \frac{dX_1}{dt}+\adstar_{X_0}X_1+\adstar_{X_1}X_0=0,
\end{equation}
which is precisely the linearized Euler--Arnold equation along $u(t)=X_0(t)$. As observed in~\cite{ebin2006singularities}, this equation simplifies under left translation. Accordingly, for $n\geq0$ we define
\begin{equation}\label{left translation}
X_n(t)=\Ad_{\gamma(t)}x_n(t).
\end{equation}

\begin{lemma}
The functions $\{x_n(t)\}$ defined by~\eqref{left translation} satisfy
\begin{equation}\label{x_n evolution}
\begin{split}
&\Lambda(t)x_0(t)=u_0, \\
&\frac{d}{dt}\bigl(\Lambda(t)x_1(t)\bigr)+\adstar_{x_1(t)}u_0=0, \\
&\frac{d}{dt}\bigl(\Lambda(t)x_n(t)\bigr)
+\adstar_{x_n(t)}u_0
+\sum_{k=1}^{n-1}\adstar_{x_k(t)}\Lambda(t)x_{n-k}(t)=0,
\quad n\geq2,
\end{split}
\end{equation}
where $\Lambda(t):=\Adstar_{\gamma(t)}\Ad_{\gamma(t)}$, with initial conditions
\begin{equation*}
    \Ad_{\gamma(t)}x_0(t)=u(t), \qquad x_1(0)=w_0, \qquad x_n(0)=0 \ \text{for } n\geq2.
\end{equation*}
\end{lemma}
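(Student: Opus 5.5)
We substitute $X_n = \Ad_{\gamma}x_n$ into \eqref{X_n evolution} and apply $\Adstar_{\gamma(t)}$ to both sides. This converts each equation into one involving only $x_k$, $u_0$ and $\Lambda(t)=\Adstar_{\gamma}\Ad_{\gamma}$. The tools are the two coadjoint identities from the proof of Lemma~\ref{general Killing lemma}, namely \eqref{Adstar homomorphism} and \eqref{Adstar derivative}, together with the conservation law \eqref{general coadjoint conservation law}. The latter reads $\Adstar_{\gamma(t)}u(t)=u_0$ with $\partial_t\gamma = d_eR_\gamma u$.

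First, the case $n=0$ is immediate. Since $X_0=u$, we have $\Ad_\gamma x_0 = u$, so
\[
\Lambda x_0=\Adstar_\gamma \Ad_\gamma x_0=\Adstar_\gamma u=u_0 .
\]
This uses no dynamics beyond the conservation law.

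Next, for $n\ge1$ we use the key identity
\[
\frac{d}{dt}\bigl(\Adstar_\gamma Y\bigr)=\Adstar_\gamma\bigl(\partial_t Y+\adstar_u Y\bigr),
\]
which follows from \eqref{Adstar derivative} with velocity $u$ and the product rule. Taking $Y=X_n$ gives
\[
\frac{d}{dt}(\Lambda x_n)=\Adstar_\gamma\bigl(\partial_t X_n+\adstar_{X_0}X_n\bigr),
\]
since $\Adstar_\gamma X_n=\Lambda x_n$ and $u=X_0$. By \eqref{X_n evolution}, the bracket equals $-\sum_{k=1}^{n}\adstar_{X_k}X_{n-k}$; the $k=0$ term is exactly the one absorbed above. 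We then convert each remaining term using \eqref{Adstar homomorphism} with $\sigma=\gamma$ and $\Ad_\gamma x_k=X_k$:
\[
\Adstar_\gamma\adstar_{\Ad_\gamma x_k}X_{n-k}=\adstar_{x_k}\Adstar_\gamma X_{n-k}.
\]
For $k=n$ the right-hand side is $\adstar_{x_n}\Adstar_\gamma u=\adstar_{x_n}u_0$, by the conservation law. For $1\le k\le n-1$ it is $\adstar_{x_k}\Lambda x_{n-k}$. This yields the stated equations; for $n=1$ the sum is empty, giving the second line.

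Finally, the initial conditions follow from $\gamma(0)=e$, so that $\Ad_{\gamma(0)}=\mathrm{Id}$, together with the initial data of the previous lemma.

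The main care point is sign and ordering conventions in \eqref{Adstar homomorphism} and \eqref{Adstar derivative}: which argument of $\adstar$ is transported, and whether the derivative of $\Adstar_\gamma$ produces $\Adstar_\gamma\adstar_u$ on the right rather than on the left. I would check these against the special case $n=0$. Differentiating $\Adstar_\gamma u=u_0$ must reproduce the Euler--Arnold equation $\partial_t u=-\adstar_u u$, and this consistency check fixes the conventions used above.
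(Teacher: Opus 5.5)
Your proposal is correct and follows the paper's proof: you apply $\Adstar_{\gamma}$ to \eqref{X_n evolution}, absorb the $\adstar_u X_n$ term via \eqref{Adstar derivative}, pull $\Ad_\gamma$ out with \eqref{Adstar homomorphism}, and finish with the conservation law \eqref{general coadjoint conservation law}. You also spell out the $n=0$ and $n=1$ cases and the initial conditions, which the paper leaves implicit by treating only $n\ge 2$.
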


\begin{proof}
Applying $\Adstar_{\gamma(t)}$ to~\eqref{X_n evolution} for $n\geq2$ gives
\begin{equation*}
    \Adstar_{\gamma}\frac{dX_n}{dt} + \Adstar_{\gamma}\adstar_u X_n + \Adstar_{\gamma}\adstar_{X_n}u + \sum_{k=1}^{n-1}\Adstar_{\gamma}\adstar_{X_k}X_{n-k}=0.
\end{equation*}
Using~\eqref{Adstar derivative}, this becomes
\begin{equation}\label{x_n intermediate}
\frac{d}{dt}\bigl(\Adstar_{\gamma}X_n\bigr)
+\Adstar_{\gamma}\adstar_{X_n}u
+\sum_{k=1}^{n-1}\Adstar_{\gamma}\adstar_{X_k}X_{n-k}=0.
\end{equation}
Substituting $X_n=\Ad_{\gamma}x_n$ into~\eqref{x_n intermediate} and applying~\eqref{Adstar homomorphism} yields
\begin{equation*}
    \frac{d}{dt}\bigl(\Adstar_{\gamma}\Ad_{\gamma}x_n\bigr) + \adstar_{x_n}\Adstar_{\gamma}u + \sum_{k=1}^{n-1}\adstar_{x_k}\Adstar_{\gamma}\Ad_{\gamma}x_{n-k}=0.
\end{equation*}
Finally, invoking the conservation law~\eqref{general coadjoint conservation law} gives~\eqref{x_n evolution}.
\end{proof}

Recall now that for $X$ Killing we have $\Lambda_X(t)=\mathrm{Id}$, so \eqref{x_n evolution} reduces to
\begin{equation}\label{Killing x_n evolution}
\begin{split}
    &x_0 = X, \\
    &\frac{dx_1}{dt} + \adstar_{x_1}X=0, \\
    &\frac{d x_2}{dt} + \adstar_{x_2}X + \adstar_{x_1}x_{1}=0,\\
    &\frac{d x_n}{dt} + \adstar_{x_n}X + \sum_{k=1}^{n-1}\adstar_{x_k}x_{n-k}=0, \quad n\geq3
\end{split}
\end{equation}
with initial conditions
\begin{equation*}
    \qquad x_1(0)=w_0, \qquad x_n(0)=0 \ \text{for } n\geq2.
\end{equation*}

The simplest nontrivial scenario occurs when the hierarchy truncates after the first-order term, i.e., when $x_n\equiv 0$ for all $n\geq2$. Remarkably, a sufficient condition for this is $\adstar_{x_1}x_1=0$; indeed, then $x_2=0$ and all higher-order terms vanish inductively. In this case, the system reduces to
\begin{equation}\label{Killing x_1 evolution}
\begin{split}
    &x_0 = X, \\
    &\frac{dx_1}{dt} + \adstar_{x_1}X=0, \\
    &\adstar_{x_1}x_{1}=0,\\
\end{split}
\end{equation}
with initial condition
\begin{equation*}
    \qquad x_1(0)=w_0.
\end{equation*}
In this situation, the corresponding Euler–Arnold solution takes the form
\begin{equation*}
    U(t)=X+\Ad_{\gamma_X(t)}x_1(t),
\end{equation*}
which is precisely the structure identified in Theorem~\ref{Euler and Euler-Coriolis correspondence}.

\enlargethispage{\baselineskip}

\section{Future Directions}\label{future}

There are several natural directions for continuations of this work. The first concerns the rigidity of the mechanism underlying our constructions. In all of our explicit examples, the steady Euler flow $u_0$ and its image under the inertia operator $\inert u_0$ are both Killing fields. However, it is not clear whether this is essential. It would be interesting to determine whether this is the only mechanism by which the hypotheses of Theorem~\ref{simultaneous eigenvalue to euler solution} can be satisfied, or whether new classes of solutions arise when these assumptions are relaxed.

Closely related to this is the question of which three-dimensional Riemannian manifolds admit Killing fields whose curl is also Killing. A complete classification would provide a clearer geometric understanding of the scope of the constructions presented here and may reveal additional families of explicit Euler flows. We note that complete Killing fields on two-dimensional manifolds (possibly with boundary, possibly noncompact) have been recently classified by Shimizu \cite{shimizu2024hydrodynamic}, who obtained a full description of all topological possibilities. A full classification in the three-dimensional case would include circle bundles over manifolds with boundary as opposed to the boundaryless case considered in Section \ref{circle}, along with the classification of all possible bundles over a given surface of positive genus in Theorem \ref{boothbywangprop}. The nonorientable case may also be interesting to study as in \cite{khesin2025curvatures}. For torus bundles as in Section \ref{torus}, we considered only trivial bundles over surfaces with boundary, but we do not have a global picture or know if there are any examples without boundary.

We know that even steady flows can have chaotic trajectories, such as the ABC flows~\cite{dombre1986chaotic} on a $3$-torus. Our solutions are linear time-dependent combinations of steady flows, which in some cases have closed trajectories. The sinusoidal dependence can create precession and lead to trajectories that fill regions of space. It would be interesting to see if there can be any closed particle trajectories, or if it is possible to have all trajectories close at the same time (corresponding to closed geodesics in the volume-preserving diffeomorphism group). 

As noted in Theorem \ref{simultaneous eigenvalue to euler solution}, the construction yields not only an exact solution of the Euler equations, but also a particular exact solution of the linearized Euler equation around it, which oscillates in time. However we do not know any other solutions of the linearized equation, and it is far from clear whether these solutions are stable in the Eulerian sense. Such questions are difficult even for the Rossby-Haurwitz waves on $S^2$, and nonlinear stability presents greater challenges~\cite{constantin2022stratospheric}; however it is conceivable some simplifications may happen in highly-symmetric spaces as on $S^3$.

Related to the question of Eulerian stability is the issue of Lagrangian stability, or deviation of particle paths. The geometric approach is to study growth and boundedness of Jacobi fields along the corresponding geodesic in the volumorphism group. Similarly one can study conjugate points, where the geodesics meet to first-order. This project was initiated by Benn~\cite{benn2021conjugate} for Rossby-Haurwitz waves on $S^2$, who found conjugate points along most of the corresponding geodesics. It seems likely that on $S^3$ for example there will also be conjugate points along most of the corresponding geodesics, especially considering that they are much easier to find in three dimensions than two~\cite{preston2006on}. 

Since all our solutions are trigonometric in time, one could view the solutions as an explicit embedding of harmonic oscillators into non-steady Euler flows. This is analogous to the fact that any finite-dimensional dynamical system can be embedded into a steady Euler flow in sufficiently high dimension~\cite{tao2018universality} and sometimes even in three dimensions~\cite{cardona2021constructing}. Can we get more complicated time dependence of solutions (not just sinusoidal combinations of three steady flows) by searching in higher-dimensional manifolds? Our inertia operator approach is specific to two and three dimensions, so we do not have any information about whether such constructions extend to higher dimensions. 

Beyond ideal hydrodynamics, it is natural to ask whether similar ideas can be applied to related nonlinear systems. In particular, the equations of ideal magnetohydrodynamics in three dimensions admit a formulation closely parallel to that of the Euler equations, with the vorticity equation coupled to the evolution of a divergence-free magnetic field. This structure suggests that a perturbative or geometric approach analogous to the one developed here may yield explicit, non-stationary solutions of the magnetohydrodynamics equations as well, as happens for Kelvin modes~\cite{dritschel1991generalized}. 

One might expect the same method to give explicit solutions of the Navier-Stokes equation on a manifold in some cases. Here of course the boundary conditions change, so it would likely be easier to find analogues on compact manifolds without boundary. Here one should take care to use the correct vector Laplacian~\cite{chan2017formulation} on the Riemannian $3$-manifold. On an Einstein $3$-manifold such as $S^3$, this problem may be tractable.

\appendix

\section{Auxiliary Geometric Results}\label{geometric lemmas}

Here we present the proofs of Proposition \ref{abundance lemma}, Proposition \ref{beltramilemma} and Theorem \ref{cmetriccurls}.

\subsection{The proof of Proposition \ref{abundance lemma}}

We begin by establishing a useful symmetry.

\begin{lemma}\label{antisymmetry}
    Let $(M,g)$ be a two- or three-dimensional compact Riemannian manifold, possibly with boundary. For all $u,v,w\in \ff(M)$ we have 
    \begin{equation}\label{biinvariant}
        \langle \inert^{-1} u, [v,w] \rangle_{L^2} + \langle \inert^{-1}w, [v,u]\rangle_{L^2} = 0.
    \end{equation}
\end{lemma}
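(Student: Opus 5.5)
The plan is to reduce \eqref{biinvariant} to a standard vector-calculus identity for the $L^2$ pairing against a Lie bracket. Set $a=\inert^{-1}u$ and $b=\inert^{-1}w$, both in $\ff(M)$, so that $u=\inert a$ and $w=\inert b$. The identity then reads
\begin{equation*}
\langle a,[v,\inert b]\rangle_{L^2}+\langle b,[v,\inert a]\rangle_{L^2}=0 .
\end{equation*}
In other words, the bilinear form $(a,b)\mapsto\langle a,[v,\inert b]\rangle_{L^2}$ should be antisymmetric for every divergence-free $v$ tangent to the boundary. We will prove this directly in terms of $\inert$.

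The key tool is the divergence-free formula for the bracket. For $v,z$ divergence-free we have $[v,z]=\curl(z\times v)$ in three dimensions. In two dimensions the analogous statement is that $[v,z]$ is the skew-gradient of $\star(v^\flat\wedge z^\flat)$, up to sign. In three dimensions we then integrate by parts:
\begin{equation*}
\langle a,\curl(\inert b\times v)\rangle=\langle\curl a,\inert b\times v\rangle+\text{bdry}=\int_M g(\inert a,\inert b\times v)\,dV .
\end{equation*}
The boundary term is $\int_{\partial M}g(a\times(\inert b\times v),\nu)$. Expanding $a\times(\inert b\times v)=\inert b\,g(a,v)-v\,g(a,\inert b)$, and using that $v$ is tangent to the boundary, this reduces to $\int_{\partial M} g(a,v)\,g(\inert b,\nu)$. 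It vanishes provided $\curl$ of a field in $\ff$ is itself tangent to the boundary, which is exactly the setting of Lemma~\ref{inertia basis}, where $\inert$ maps $\ff$ to $\ff$. The resulting scalar triple product $\int g(\inert a,\inert b\times v)$ is antisymmetric in $(a,b)$, which gives the claim.

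In two dimensions we write $a=\symp\phi$, $b=\symp\chi$ and $v=\symp\psi$, with the stream functions vanishing on $\partial M$, so that $\inert b=\symp\Delta\chi$. The bracket of skew-gradients is the skew-gradient of the Poisson bracket, $[\symp\psi,\symp\eta]=\pm\symp\{\psi,\eta\}$. Since the $L^2$ product of skew-gradients equals the Dirichlet pairing of the stream functions, we get
\begin{equation*}
\langle a,[v,\inert b]\rangle=\pm\int_M \Delta\phi\,\{\psi,\Delta\chi\}\,dA .
\end{equation*}
This requires the Poisson bracket term to vanish on the boundary; note that $\{\psi,\Delta\chi\}=\pm v(\Delta\chi)$, and this is handled by the same integration by parts. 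The expression $\int \Delta\phi\,\{\psi,\Delta\chi\}$ equals $\int\psi\{\Delta\chi,\Delta\phi\}$ up to boundary terms, because $\int f\{g,h\}$ is cyclically symmetric when one of the functions vanishes on the boundary, and $\psi$ does. It is therefore antisymmetric in $(\phi,\chi)$, which again gives the identity.

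The main obstacle is the boundary terms, and this is where I would be most careful. In 3D the argument uses that $\inert b$ is tangent to $\partial M$, which holds because $\ff$ is preserved by $\inert$ via the Yoshida--Giga setup; the triple product itself produces only $g(v,\nu)=0$ terms. In 2D the stream functions $\phi,\chi$ vanish on $\partial M$ but $\Delta\chi$ need not, so the cyclic-symmetry argument must be arranged so that the only boundary integrals carry a factor $\psi|_{\partial M}=0$ or $g(v,\nu)=0$. Concretely, I would write $\int \Delta\phi\, v(\Delta\chi)$ and integrate by parts using $\diver v=0$ and $v\parallel\partial M$ to move $v$ onto $\Delta\phi$, giving $-\int \Delta\chi\, v(\Delta\phi)$. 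This yields the antisymmetry directly, and the only boundary term is $\int_{\partial M}\Delta\phi\,\Delta\chi\, g(v,\nu)=0$.
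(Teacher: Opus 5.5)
Your route is the paper's. In three dimensions you write the bracket as $\curl(z\times v)$ and integrate by parts down to a scalar triple product; in two dimensions you pass to stream functions and Poisson brackets. Proving antisymmetry of $(a,b)\mapsto\langle a,[v,\inert b]\rangle_{L^2}$ directly, instead of polarizing and checking the diagonal, is a harmless variation.

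The 3D half is correct. However, the boundary term $\int_{\partial M} g(a,v)\,g(\inert b,\nu)$ vanishes simply because $\inert b=w\in\ff$ by hypothesis. It is not true that curl maps $\ff$ into $\ff$: the rotation field $(-y,x,0)$ in the unit ball is tangent to the boundary, but its curl is not.

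The 2D half has a genuine gap, exactly where you flagged the danger.
\begin{itemize}
\item Converting the Dirichlet pairing $\int_M g(\nabla\phi,\nabla\{\psi,\Delta\chi\})\,\mu$ into $\int_M\Delta\phi\,\{\psi,\Delta\chi\}\,\mu$ produces the boundary integral $\int_{\partial M}\{\psi,\Delta\chi\}\,\partial_\nu\phi$. This term carries no factor $\psi|_{\partial M}$ or $g(v,\nu)$.
\item So your closing claim that the only boundary term is $\int_{\partial M}\Delta\phi\,\Delta\chi\,g(v,\nu)$ is false, and ``handled by the same integration by parts'' does not dispose of it.
\item Since $v$ is tangent, $\{\psi,\Delta\chi\}=\pm v(\Delta\chi)$ is, on $\partial M$, a tangential derivative of $\Delta\chi$. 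What kills this term is therefore that $\Delta\chi$ is constant on each boundary component. That is the opposite of your remark that $\Delta\chi$ ``need not'' vanish there.
\end{itemize}
The missing observation is that $\symp\Delta\chi=\inert b=w\in\ff$. In the paper's normalization this means $\chi=\Delta_D^{-1}\psi_w$, so $\Delta\chi=\psi_w$ vanishes on $\partial M$; likewise $\Delta\phi=\psi_u$.

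This hypothesis is essential, not cosmetic. On the unit disk take $\phi=(1-r^2)r\cos\theta$, so that $\Delta\phi=8r\cos\theta$ is not constant on the circle, and $\psi=(1-r^2)r^2\sin 2\theta$. Then the interior term $\int_M\Delta\phi\,\{\psi,\Delta\phi\}\,\mu$ vanishes, while the boundary term equals $-16\pi$ (with $\{f,h\}\mu=df\wedge dh$ and $\nu$ outward). So your argument as written would prove a false identity for the non-tangent field $\symp\Delta\phi$.

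Once you add the observation, your integration by parts in $v$ finishes the proof. In the variables $\psi_u=\Delta\phi$, $\psi_w=\Delta\chi$, it is just the polarized form of the paper's computation $\int_M\psi_u\{\psi_v,\psi_u\}\,\mu=-\tfrac12\int_{\partial M}\psi_u^2\,d\psi_v=0$.
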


\begin{proof}
    For $v \in \ff$ consider the symmetric bilinear operator given by
    \begin{equation*}
        \Psi_v : \ff \times \ff \rightarrow \R , \quad (u,w) \mapsto \ip{ \inert^{-1} u, [v,w]}_{L^2} + \ip{\inert^{-1}w, [v,u]}_{L^2}.
    \end{equation*}
    We will show that $\Psi_v \equiv 0$. By polarization it suffices to show that $\Psi_v$ vanishes on the diagonal of $\ff \times \ff$.

    If $M$ is two-dimensional, we can express any $u \in \ff$ as a skew-gradient $u = \symp \psi_u$ for some stream function $\psi_u$ vanishing on the boundary $\partial M$. Moreover, the commutator of vector fields can be rewritten in terms of the Poisson bracket as $[v,u] = \symp \{\psi_v, \psi_u\}$ which is defined in terms of the Riemannian volume form $\mu$ by $\{\psi_v, \psi_u\}\mu = d\psi_v \wedge d\psi_u$. Using this we have
    \begin{align*}
        \frac{1}{2}\Psi_v(u,u) &= \ip{ \Delta^{-1} u, [v,u]}_{L^2} \\
        &= \ip{ \Delta^{-1} \symp \psi_u , \symp\{\psi_v,\psi_u\}}_{L^2} \\
        &= \int_M g\left( \symp \Delta^{-1} \psi_u, \symp \{\psi_v, \psi_u \} \right) \mu.
    \end{align*}
    As the rotation is an isometry, we may replace the skew-gradients with gradients and we have
    \begin{align*}
        \frac{1}{2}\Psi_v(u,u) &= \int_M g\left( \nabla \Delta^{-1} \psi_u, \nabla \{\psi_v, \psi_u \} \right) \mu \\
        &= \int_M \Big( \diver \big(\Delta^{-1}\psi_u \nabla \{\psi_v, \psi_u\}\big) + \psi_u\{\psi_v, \psi_u\} \Big) \mu.
    \end{align*}
    For the divergence term, note that
    \begin{align*}
        \int_M \diver \big(\Delta^{-1}\psi_u \nabla \{\psi_v, \psi_u\}\big) \mu &= \int_{\partial M} \Delta^{-1}\psi_u \nabla \{\psi_v, \psi_u\} \iota_\nu \mu
    \end{align*}
    while for the Poisson bracket term we have
    \begin{align*}
        \int_M \psi_u\{\psi_v, \psi_u\} \mu & = \int_M \psi_u~d\psi_v \wedge d\psi_u = - \frac{1}{2}\int_M d(\psi_u^2 d \psi_v) = - \frac{1}{2}\int_{\partial M} \psi_u^2 d\psi_v.
    \end{align*}
    Both of these vanish on account of the boundary conditions for the stream functions.

    For the three-dimensional case we have $[v,u] = \curl (u \times v)$, where $\times$ denotes the usual cross product. Hence, as $\curl^{-1}$ is $L^2$ self-adjoint, we have
    \begin{equation*}
        \frac{1}{2}\Psi_v(u,u) = \ip{\curl^{-1} u , [v,u]}_{L^2} = \ip{u, u \times v}_{L^2} = 0.
    \end{equation*}
\end{proof}

As an immediate consequence, we obtain the following property of the coadjoint operator, $K_{u_0}$.

\begin{corollary}\label{coadjoint antisymmetry}
    Let $(M,g)$ be a two- or three-dimensional compact Riemannian manifold, possibly with boundary. For any $u_0 \in \ff(M)$, the operator $K_{u_0} : \ff(M) \rightarrow \ff(M) \ ; \ v \mapsto \inert^{-1}[v, \inert u_0]$ is $L^2$ skew self-adjoint.
\end{corollary}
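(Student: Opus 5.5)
The plan is to use Lemma~\ref{antisymmetry} with a suitable choice of the three slots, together with the self-adjointness of $\inert^{-1}$ on $\ff(M)$ from Lemma~\ref{inertia basis}. Fix $u_0\in\ff(M)$ and take arbitrary $v,w\in\ff(M)$. We want to show
\begin{equation*}
    \ip{K_{u_0}v, w}_{L^2} = -\ip{v, K_{u_0}w}_{L^2}.
\end{equation*}

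First, move $\inert^{-1}$ across the inner product using self-adjointness:
\begin{equation*}
    \ip{K_{u_0}v,w}_{L^2} = \ip{\inert^{-1}[v,\inert u_0], w}_{L^2} = \ip{[v,\inert u_0], \inert^{-1}w}_{L^2}.
\end{equation*}
Similarly,
\begin{equation*}
    \ip{v, K_{u_0}w}_{L^2} = \ip{\inert^{-1}v, [w,\inert u_0]}_{L^2}.
\end{equation*}
This step needs $[v,\inert u_0]$ to lie in $\ff(M)$, or at least needs the pairing with $\inert^{-1}w\in\ff$ to make sense. The bracket of divergence-free fields tangent to the boundary is again divergence-free and tangent to the boundary. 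Any harmonic component of the bracket pairs trivially against $\inert^{-1}w\in\ff$, because harmonic fields are $L^2$-orthogonal to $\ff$. So the pairing is well defined, and the definition of $K_{u_0}$ implicitly projects the bracket onto $\ff$ before applying $\inert^{-1}$.

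Next, write $\inert u_0 = z$; by the remark above we may replace $z$ by its $\ff$-component. Apply Lemma~\ref{antisymmetry} with the middle slot equal to $z$:
\begin{equation*}
    \ip{\inert^{-1}u,[z,w']}_{L^2} + \ip{\inert^{-1}w',[z,u]}_{L^2} = 0 .
\end{equation*}
Take $u=v$ and $w'=w$ and use $[z,\cdot]=-[\cdot,z]$. This gives
\begin{equation*}
    \ip{\inert^{-1}v,[w,z]}_{L^2} + \ip{\inert^{-1}w,[v,z]}_{L^2} = 0,
\end{equation*}
which is exactly $\ip{v,K_{u_0}w}_{L^2} + \ip{K_{u_0}v,w}_{L^2} = 0$.

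The main obstacle is checking that Lemma~\ref{antisymmetry} may be applied with $\inert u_0$ in the middle slot. The lemma requires $v\in\ff(M)$, whereas $\inert u_0$ is divergence-free but in three dimensions need not be tangent to the boundary. In two dimensions $\inert u_0=\symp\Delta\psi$, whose stream function need not vanish on $\partial M$. Inspecting the proof of Lemma~\ref{antisymmetry}, only the fields in the outer slots $u,w$ need the boundary conditions: the boundary integrals vanish because $\psi_u$ and $\Delta^{-1}\psi_u$ vanish on $\partial M$, and in three dimensions the cross-product identity is pointwise. So I would remark that the lemma holds verbatim for any smooth divergence-free middle field, and then conclude. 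Skew-adjointness on the complexification follows by sesquilinear extension.
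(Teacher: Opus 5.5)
Your reduction (Lemma~\ref{antisymmetry} with $\inert u_0$ in the middle slot, then self-adjointness of $\inert^{-1}$) is exactly what the paper intends, since it calls the corollary an immediate consequence of that lemma. You also correctly spot the point the paper passes over: when $\partial M\neq\emptyset$, $z=\inert u_0$ is generally not tangent to $\partial M$. Your resolution of that point fails, however.

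First, ``replace $z$ by its $\ff$-component'' is unjustified. Your earlier remark was about the non-$\ff$ part of the bracket $[v,z]$, not of $z$. With $P$ the $L^2$-orthogonal projection onto $\ff(M)$, the difference $[v,z]-[v,Pz]=[v,z-Pz]$ does not pair to zero with $\inert^{-1}w$.

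More importantly, Lemma~\ref{antisymmetry} is false for a divergence-free middle field that is not tangent to $\partial M$.
\begin{itemize}
\item In two dimensions, set $h=\{\psi_z,\psi_u\}$ and $F=\Delta^{-1}\psi_u$. The correct identity is $g(\nabla F,\nabla h)=\diver(h\nabla F)+h\psi_u$. The display in the lemma's proof puts $F\nabla h$ inside the divergence but keeps $h\psi_u$ outside, which mixes two Green identities. The correct version gives
\begin{equation*}
\tfrac12\Psi_z(u,u)=\int_{\partial M} h\,\partial_\nu F\,ds, \qquad h\big|_{\partial M}=\pm\, g(z,\nu)\,\partial_\nu\psi_u .
\end{equation*}
So the boundary term vanishes because the \emph{middle} field is tangent, not because $\psi_u$ and $F$ vanish on $\partial M$.
\item In three dimensions,
\begin{equation*}
\ip{\curl^{-1}u,\curl(u\times z)}_{L^2}=\ip{u,u\times z}_{L^2}+\int_{\partial M} g(z,\nu)\,g(u,\curl^{-1}u)\,dS .
\end{equation*}
Only the final cancellation $g(u,u\times z)=0$ is pointwise. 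The integration by parts before it needs $u\times z$ normal to $\partial M$, i.e.\ $z$ tangent.
\end{itemize}

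This is not a technicality: with your reading $K_{u_0}v=\inert^{-1}P[v,\inert u_0]$, the statement itself is false. You have $\ip{K_{u_0}v,v}_{L^2}=\ip{[v,z],\inert^{-1}v}_{L^2}=-\tfrac12\Psi_z(v,v)$. Take the flat unit disk with $\psi_{u_0}=2xy(1-x^2-y^2)$, so $\Delta\psi_{u_0}=24xy$, and $v=\symp\big(J_1(\beta r)\cos\theta\big)$ with $J_1(\beta)=0$. The surviving boundary term gives $\ip{K_{u_0}v,v}_{L^2}=\pm 12\pi J_1'(\beta)^2\neq 0$.

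Your argument, and the paper's, is therefore valid only when $\inert u_0$ is tangent to $\partial M$. Examples are $\partial M=\emptyset$, or $\inert u_0$ a Killing field as in Proposition~\ref{abundance lemma}.

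For general $u_0$, the missing idea is to invert $\inert$ at the level of potentials. This is the identification $K_{u_0}v=\adstar_v u_0$ from Section~\ref{general framework}.
\begin{itemize}
\item In 2D, $[v,\inert u_0]=\symp\{\psi_v,\omega_0\}$ with $\omega_0=\Delta\psi_{u_0}$. Set $K_{u_0}v=\symp\Delta_D^{-1}\{\psi_v,\omega_0\}$, where $\Delta_D^{-1}$ is the Dirichlet inverse on functions.
\item In 3D, $[v,\curl u_0]=\curl(\curl u_0\times v)$. Set $K_{u_0}v=P(\curl u_0\times v)$.
\end{itemize}
In both cases $\ip{K_{u_0}v,w}_{L^2}=-\ip{u_0,[v,w]}_{L^2}$, and all boundary terms now vanish because $v,w\in\ff$. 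This expression is manifestly antisymmetric in $v,w$. Concretely, in 2D one has $\int_M(\psi_w\,d\psi_v+\psi_v\,d\psi_w)\wedge d\omega_0=\int_{\partial M}\psi_v\psi_w\,d\omega_0=0$. In 3D, $g(\curl u_0\times v,w)=-g(\curl u_0\times w,v)$ holds pointwise.
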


We are now ready to construct the basis of complex fields $\ef$ satisfying the hypothesis of Theorem \ref{simultaneous eigenvalue to euler solution}.

\begin{proposition}[= Proposition \ref{abundance lemma}]
    Let $(M,g)$ be a Riemannian manifold, possibly with boundary, and $X \in \ff(M)$ be a Killing field such that $\inert X$ is also Killing. Then the operators
    \begin{equation*}
        K_X : \ef \mapsto K_X\ef, \quad \inert : \ef \mapsto \inert \ef, \quad \ad_X: \ef\mapsto -[X,\ef]    
    \end{equation*}
    admit a discrete simultaneous eigenbasis of $\C \otimes\ff(M)$. 
\end{proposition}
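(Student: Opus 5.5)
The plan is to show that the three operators pairwise commute on $\C\otimes\ff(M)$ and then simultaneously diagonalize them inside the finite-dimensional eigenspaces of $\inert$ supplied by Lemma~\ref{inertia basis}.

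First, $\ad_X$ commutes with $\inert$. Since $X$ is Killing, its flow $\gamma_X$ acts by orientation-preserving isometries (connected to the identity). The Hodge Laplacian, the curl, the musical isomorphisms and the Hodge star are natural under such isometries. Hence $\gamma_X(t)_*$ commutes with $\inert$. Differentiating at $t=0$ gives $[X,\inert v]=\inert[X,v]$ for all $v\in\ff$.

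The same isometries preserve $\ff(M)$: they preserve divergence-free fields, tangency to $\partial M$ and the harmonic subspace. Therefore $\ad_X$ maps $\ff$ to itself. By the same naturality $\ad_X$ also commutes with $\inert^{-1}$. Moreover, Killing fields give $L^2$-skew-adjoint operators $\ad_X$, because the flow is volume-preserving and isometric.

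Second, use the hypothesis that $Y:=\inert X$ is Killing. Write $K_X v=\inert^{-1}[v,Y]=-\inert^{-1}\ad_Y^{\,\prime}v$, where $\ad_Y^{\,\prime}v:=[Y,v]$. By the first step applied to $Y$, the operator $v\mapsto[Y,v]$ commutes with $\inert$ and $\inert^{-1}$. Hence $K_X=-\inert^{-1}[Y,\cdot\,]$ commutes with $\inert$.

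To see that $K_X$ commutes with $\ad_X$, I use the Jacobi identity:
\begin{equation*}
[X,[Y,v]]-[Y,[X,v]]=[[X,Y],v].
\end{equation*}
So it suffices that $[X,Y]=0$. This follows because $Y=\inert X$ and $\ad_X$ commutes with $\inert$, so $[X,\inert X]=\inert[X,X]=0$. (This is also the statement that Killing fields are steady.) Combining, $\ad_X K_X=-\ad_X\inert^{-1}[Y,\cdot\,]=-\inert^{-1}[Y,\ad_X\,\cdot\,]=K_X\ad_X$.

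Third, by Lemma~\ref{inertia basis}, $\C\otimes\ff$ is the $L^2$-orthogonal direct sum of finite-dimensional eigenspaces $E_{\iev_k}$ of $\inert$. Both $K_X$ and $\ad_X$ preserve each $E_{\iev_k}$. Each is skew-adjoint there: $K_X$ by Corollary~\ref{coadjoint antisymmetry}, and $\ad_X$ as noted in the first step. They also commute with each other. Two commuting skew-Hermitian (hence normal) matrices on a finite-dimensional complex inner product space admit a common orthonormal eigenbasis, with purely imaginary eigenvalues. Taking the union over $k$ gives a discrete simultaneous eigenbasis of $\C\otimes\ff(M)$. Its eigenvalues have the form $-i\kev$, $\iev$ and $-i\aev$, so they are exactly what Theorem~\ref{simultaneous eigenvalue to euler solution} requires.

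The main obstacle is the first step: the naturality argument that $\ad_X$ preserves $\ff$ and commutes with $\inert$, including boundary behaviour. One needs the isometric flow to map $\partial M$ to itself; this holds because $X$ is tangent to $\partial M$. In three dimensions one also needs orientation preservation for $\curl$, which holds because the flow is connected to the identity. In two dimensions I would instead argue via stream functions: $\gamma_X$ preserves the Dirichlet condition and commutes with the scalar Laplacian.
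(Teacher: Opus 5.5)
Your proposal is correct and follows essentially the same route as the paper. Both arguments get $[\ad_X,\inert]=[\ad_{\inert X},\inert]=0$ from isometry-invariance, use $[X,\inert X]=0$ to make $K_X=\inert^{-1}\ad_{\inert X}$ commute with $\ad_X$, take skew-adjointness of $K_X$ (Corollary~\ref{coadjoint antisymmetry}) and of $\ad_X$, and simultaneously diagonalize on the finite-dimensional eigenspaces of $\inert$. Your small variations are all valid: you derive $[X,\inert X]=0$ from $[\ad_X,\inert]=0$ rather than from $X$ being steady, you get skew-adjointness of $\ad_X$ from unitarity of the isometric flow rather than by integrating by parts, and you explicitly check that $\ad_X$ preserves $\ff(M)$, a point the paper leaves implicit.
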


\begin{proof}[Proof of Proposition~\ref{abundance lemma}]
    From Lemma \ref{inertia basis} we have the $L^2$-orthogonal decomposition
    \begin{equation*}
        \C \otimes \ff(M) = \bigoplus_{k=1}^\infty E_k
    \end{equation*}
    where, for each $k \in \Z_{\geq1}$, we have $\dim(E_k) < \infty$ and $\inert \ef = \iev_k \ef$, for all $\ef \in E_k$.

    We show that, on each $E_k$, the operators $K_X$ and $\ad_X$ are skew-adjoint and commute with $\inert$ and with each other. It then follows that each $E_k$ admits a simultaneous eigenbasis, yielding the desired decomposition.

    We begin with the commutator relations. Note first that $K_X = \inert^{-1}\ad_{\inert X}$ and that, as both the flows of $X$ and $\inert X$ act by isometries, we have $[\ad_X, \inert] = [\ad_{\inert X}, \inert] = 0$. Next, as the adjoint is a homomorphism and $X$ is a steady-state solution of \eqref{inertia euler}, we have that $[\ad_X, \ad_{\inert X}] = \ad_{[X, \inert X]} = 0$. Hence it follows that $[K_X, \ad_X] = [K_X, \inert]= 0$.

    The skew-symmetry of $K_X$ has already been established in Corollary \ref{coadjoint antisymmetry}. For $\ad_X$ note that, for all $\ef \in E_k$, we have
    \begin{equation*}
       \int_M g\big( \ad_X\ef, \ef\big) \mu  =  - \int_M g\big( [X,\ef], \ef\big) \mu =\int_M g(\nabla_{\ef}X,{\ef}) -   g(\nabla_X\ef, \ef) \, \mu
    \end{equation*}
    The first term vanishes as $X$ is Killing, cf. \eqref{Killing condition}. For the second term we integrate by parts and acquire
    \begin{align*}
        \int_M g\big( \ad_X \ef, \ef\big) \mu &= -\tfrac{1}{2} \int_M X g\big({\ef},{\ef}) \mu \\
        &= -\tfrac{1}{2} \int_{\partial M} g({\ef},{\ef}) g(X,\nu) \, \iota_{\nu}\mu + \tfrac{1}{2} \int_M (\diver{X}) g({\ef},{\ef}) \, \mu,
    \end{align*}
    where again $\nu$ denotes the normal vector to the boundary and $\mu$ the volume-form. Both terms vanish pointwise as $X$ is divergence-free and tangent to the boundary. Hence, as in Lemma \ref{antisymmetry}, we have that the symmetric bilinear operator
    \begin{equation*}
        \Phi_X : E_k \times E_k \rightarrow \R, \quad (\ef_1, \ef_2) \mapsto \ip{\ef_1, \ad_X \ef_2}_{L^2} + \ip{\ef_2, \ad_X \ef_1}_{L^2} 
    \end{equation*}
    vanishes identically and the result follows.
\end{proof}

\subsection{The proof of Proposition \ref{beltramilemma}}

We first recall the statement.

\begin{proposition}[= Proposition \ref{beltramilemma}]
    Let $(M,g)$ be a three-dimensional compact connected Riemannian manifold, possibly with boundary. If $X$ and $\curl{X}$ are both Killing fields, then $g(X,\curl X)$ is constant. Consequently, if $\curl X = fX$, then both $\abs{X}_g$ and $f$ are constant.
\end{proposition}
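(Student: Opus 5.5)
The plan is to show that the function $h := g(X,\omega)$, with $\omega = \curl X$, has vanishing differential, and then to conclude from the connectedness of $M$. The basic tool is the structure of $\nabla X$ for a Killing field in three dimensions. By \eqref{Killing condition}, the endomorphism $v\mapsto \nabla_v X$ is $g$-skew. It follows that $dX^\flat(a,b) = 2g(\nabla_a X,b)$. In dimension three a skew endomorphism is a cross product with its axial vector, and comparing with the definition \eqref{curl} of $\curl$ identifies that axial vector, giving
\begin{equation*}
    \nabla_v X = \tfrac12\, \omega \times v \qquad \text{for all } v.
\end{equation*}
The same identity holds for $\omega$ with $\curl\omega$ in place of $\omega$, but we will only need it for $X$.

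The first step is to compute $v(h)$ for an arbitrary vector $v$:
\begin{equation*}
    v(h) = g(\nabla_v X,\omega) + g(X,\nabla_v\omega) = \tfrac12 g(\omega\times v,\omega) + g(X,\nabla_v\omega) = -g(\nabla_X\omega, v).
\end{equation*}
Here the first term vanishes because $\omega\times v\perp\omega$, and the last equality uses that $\omega$ is Killing, so $\nabla\omega$ is skew. Thus $\nabla h = -\nabla_X\omega$.

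The second step is to write $\nabla_X\omega = \nabla_\omega X + [X,\omega]$. We have $\nabla_\omega X = \tfrac12\omega\times\omega = 0$, so it remains to show $[X,\omega]=0$. This is the step I expect to be the main obstacle. The argument is that the flow $\gamma_X(t)$ consists of isometries that are isotopic to the identity, hence orientation preserving. Such maps commute with $\flat$, $d$ and $\star$, and therefore with $\curl$. Differentiating $(\gamma_X(t))_*\curl X = \curl\big((\gamma_X(t))_*X\big) = \curl X$ in $t$ gives $\mathcal{L}_X\omega = [X,\omega] = 0$. This is the Killing commutation relation referred to elsewhere as \eqref{Killing Lie bracket}. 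Equivalently, $X$ is a steady Euler solution, so $[X,\inert X]=0$. Hence $dh=0$, and since $M$ is connected, $h$ is constant.

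For the consequence, suppose $\curl X = fX$, so that $h = f\abs{X}_g^2$ is constant. Using the formula for $\nabla X$ again,
\begin{equation*}
    v\big(\abs{X}_g^2\big) = 2g(\nabla_v X, X) = g(\omega\times v, X) = f\, g(X\times v, X) = 0,
\end{equation*}
so $\abs{X}_g$ is constant. If $X\equiv 0$ the claim is trivial. Otherwise $\abs{X}_g$ is a nonzero constant, and $f = h/\abs{X}_g^2$ is constant as well.
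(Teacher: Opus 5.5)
Your proof is correct and follows essentially the paper's route. Both arguments establish $\nabla_{\curl X}X=0$ and $[X,\curl X]=0$, hence $\nabla_X\curl X=0$, and then differentiate $g(X,\curl X)$ using the skewness of $\nabla\curl X$; the consequence comes from $d\abs{X}_g^2=g(X\times\curl X,\cdot\,)$, which your formula $\nabla_vX=\tfrac12\curl X\times v$ encodes. The only real difference is how you get $[X,\curl X]=0$: you use naturality of $\curl$ under the orientation-preserving isometric flow of $X$ (equivalently, $\mathcal{L}_X$ commuting with $\flat$, $d$ and $\star$), whereas the paper takes the curl of the identity $X\times\curl X=\nabla\abs{X}_g^2$; both are valid.
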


\begin{proof}[Proof of Proposition~\ref{beltramilemma}]
    Let $\vf(M)$ denote the space of smooth vector fields on $M$ which are tangent to the boundary and $\nabla$ the Levi-Civita connection for $g$. The conditions that $X$ and $\curl X$ are Killing can be expressed as
    \begin{equation}\label{X Killing}
        g(\nabla_v X, w) + g(v, \nabla_w X) = 0
    \end{equation}
    and
    \begin{equation}\label{curl X Killing}
        g(\nabla_v \curl X, w) + g(v, \nabla_w \curl X) = 0
    \end{equation}
    for all $v, w \in \vf(M)$ respectively. By the definition of the cross product, for all $w \in \vf(M)$, we have
    \begin{equation}\label{cross product condition}
        g(\nabla_{\curl X}X, w) - g(\curl X , \nabla_w X) = g(\curl X , \curl X \times w) = 0.
    \end{equation}
    Combining this with \eqref{X Killing} with $v = \curl X$ immediately gives
    \begin{equation*}
        \nabla_{\curl X} X = 0
    \end{equation*}
    which, by \eqref{cross product condition} yields
    \begin{equation}\label{miscellaneous Killing identity 1}
        g(\curl X, \nabla_w X) = 0
    \end{equation}
    for all $w \in \vf(M)$.
    
    Next, note that\footnote{See \cite{lichtenfelz2022axisymmetric} for example.}
     \begin{equation}\label{killingcross}
        X\times \curl{X} = \nabla \abs{X}_g^2.
    \end{equation}
    In particular $X$ and $\curl X$ are linearly independent on an open set if and only if $\abs{X}_g$ is non-constant. Taking the curl of the left hand side of \eqref{killingcross} gives
    \begin{align*}
        \curl \big( X \times \curl X\big) &= \nabla_{\curl X} X - \nabla_X \curl X + \diver(\curl X) X - \diver(X) \curl X \\
        &= \nabla_{\curl X} X - \nabla_X \curl X.
    \end{align*}
    Hence, as $\curl \nabla \abs{X}_g^2 = 0$, we have
    \begin{equation}\label{miscellaneous Killing identity 2}
        \nabla_X\curl X=\nabla_{\curl X} X = 0
    \end{equation}
    and, in particular\footnote{As mentioned earlier, this implies that any Killing field is automatically a steady-state solution of \eqref{inertia euler}.}
    \begin{equation}\label{Killing Lie bracket}
        [X, \curl X] = 0.
    \end{equation}
    Now, letting $w = X$ in \eqref{curl X Killing} and using \eqref{miscellaneous Killing identity 1}, \eqref{miscellaneous Killing identity 2} gives that
    \begin{equation*}
        v \big(g(X, \curl X)\big) = g(\nabla_v X, \curl X) + g(X, \nabla_v \curl X) = g(\nabla_v X, \curl X) - g(\nabla_X \curl X, v) = 0
    \end{equation*}
    for any $v \in \vf(M)$.
    Hence $g( X,\curl X)$ is constant on $M$.
    
    Lastly, if $\curl X = fX$ for some function $f$, then by \eqref{killingcross}, we must have that $\abs{X}_g$ is constant, and hence $f=\frac{g( X,\curl X)}{\abs{X}_g^2}$ is also constant. 
\end{proof}

\subsection{The proof of Theorem \ref{cmetriccurls}}

We first recall the statement.

\begin{theorem}[= Theorem \ref{cmetriccurls}]
    Consider the three-dimensional manifold $[\radiusone,\radiustwo]\times \mathbb{T}^2$ with metric \eqref{3disometriesmetric}, which again, for $c>0$ and $\fnone$ a positive function, is given by
\begin{equation*}
    ds^2 = dr^2 + \Big(\fnone(r) d\theta + \frac{c}{\fnone(r)}dz\Big)^2 + \fnone'(r)^2dz^2.
\end{equation*}
For $m,n \in \Z$, let $\fnthree$ and $\fnfour$ solve the Sturm-Liouville type system
\begin{align*}
\alpha \fnone(r) \fnone'(r) \fnthree'(r) &= n\left( m - \frac{cn}{\fnone(r)^2}\right) \, \fnthree(r) + \big( \alpha^2 \fnone(r)^2 - n^2\big) \fnfour(r) \\
\alpha \fnone(r)\fnone'(r) \fnfour'(r) &= \left( 2c\alpha \, \frac{\fnone'(r)^2}{\fnone(r)^2} + \left( m-\frac{cn}{\fnone(r)^2}\right)^2 - \alpha^2 \fnone'(r)^2 \right) \, \fnthree(r) - n\left( m - \frac{cn}{\fnone(r)^2}\right) \, \fnfour(r),
\end{align*}
with boundary conditions 
\begin{equation*}
n\fnfour(\radiusone) - \left( m- \frac{cn}{\fnone(\radiusone)^2}\right) \fnthree(\radiusone) = n\fnfour(\radiustwo) - \left( m- \frac{cn}{\fnone(\radiustwo)^2}\right) \fnthree(\radiustwo) = 0,
\end{equation*}
the constant $\iev$ implicitly defined and $\fnfive$ given by
\begin{equation*}
    \fnfive(r) = \,\frac{n\fnfour(r)-\big(m-\frac{cn}{\fnone(r)^2}\big)\fnthree(r)}{\alpha \fnone(r)\fnone'(r)}.
\end{equation*}
Then the complex fields
\begin{equation*}
    \ef(r,\theta,z) = e^{in\theta} e^{imz} \bigg( i \fnfive(r) \, \partial_r + 
    \frac{\fnthree(r)}{\fnone(r)^2} \, \partial_{\theta} + \frac{\fnfour(r)}{\fnone'(r)^2} \Big( -\frac{c}{\fnone(r)^2} \, \partial_{\theta} + \partial_z\Big)
    \bigg)
\end{equation*}
are curl-eigenfields satisfying the conditions of Theorem \ref{simultaneous eigenvalue to euler solution} with
\begin{equation*}
        K_{\partial_\theta} \ef = -i \frac{2m}{\alpha} \ef, \qquad \curl \ef = \iev \ef, \qquad [\partial_\theta, \ef] = i n \ef.
    \end{equation*}
\end{theorem}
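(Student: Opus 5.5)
The approach is a direct computation in the orthonormal coframe used in the proof of Theorem~\ref{transversecurltheorem}:
\begin{equation*}
\xi^1 = dr, \qquad \xi^2 = \fnone\,d\theta + \tfrac{c}{\fnone}\,dz, \qquad \xi^3 = \fnone'\,dz,
\end{equation*}
with volume form $dV=\fnone\fnone'\,dr\wedge d\theta\wedge dz$.

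\textbf{Step 1: the $\ad_X$ and $K_{\partial_\theta}$ conditions.} The coefficients of $\ef$ in \eqref{cmetriccurlfield} depend on $(\theta,z)$ only through $e^{in\theta}e^{imz}$. Since $\partial_\theta$ commutes with the coordinate fields, $[\partial_\theta,\ef]=\partial_\theta(\text{coefficients})=in\ef$ immediately. Likewise $\curl\partial_\theta=2\partial_z$ by Theorem~\ref{transversecurltheorem}, so $[\ef,2\partial_z]=-2im\ef$. Once $\curl\ef=\iev\ef$ is known, this gives
\begin{equation*}
K_{\partial_\theta}\ef=\curl^{-1}[\ef,2\partial_z]=-\tfrac{2im}{\iev}\ef .
\end{equation*}
Divergence-freeness follows from $\curl\ef=\iev\ef$ with $\iev\neq0$. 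Tangency to the boundary is exactly the statement $\fnfive(\radiusone)=\fnfive(\radiustwo)=0$, which is the boundary condition \eqref{cmetricbcs}. Everything therefore reduces to proving that $\ef$ is a curl eigenfield.

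\textbf{Step 2: the curl computation.} Lower the index of $\ef$. We have $\partial_\theta^\flat=\fnone\xi^2$. The combination $-\tfrac{c}{\fnone^2}\partial_\theta+\partial_z$ is orthogonal to $\partial_\theta$; indeed it equals $\fnone'^{-1}e_3$, where $e_3$ is the unit dual of $\xi^3$. Hence
\begin{equation*}
\ef^\flat=e^{i(n\theta+mz)}\Big(i\fnfive\,\xi^1+\tfrac{\fnthree}{\fnone}\,\xi^2+\tfrac{\fnfour}{\fnone'}\,\xi^3\Big).
\end{equation*}
This explains the choice of coefficients: the $\xi^2$ and $\xi^3$ components are $\fnthree/\fnone$ and $\fnfour/\fnone'$. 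Rewritten in coordinates,
\begin{equation*}
\ef^\flat=e^{i(n\theta+mz)}\Big(i\fnfive\,dr+\fnthree\,d\theta+\big(\tfrac{c\fnthree}{\fnone^2}+\fnfour\big)dz\Big).
\end{equation*}
Next compute $d\ef^\flat$ in coordinates and apply $\star$ using
\begin{equation*}
\star(d\theta\wedge dz)=\tfrac{1}{\fnone\fnone'}\,dr,\qquad \star(dz\wedge dr),\qquad \star(dr\wedge d\theta),
\end{equation*}
the last two expressed through the inverse metric. Then set $\star d\ef^\flat=\iev\ef^\flat$ componentwise.

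\textbf{Step 3: reading off the equations.} The $dr$-component is algebraic. It gives
\begin{equation*}
\iev\fnone\fnone'\,i\fnfive=i\Big(n\big(\tfrac{c\fnthree}{\fnone^2}+\fnfour\big)-m\fnthree\Big),
\end{equation*}
which is exactly the stated formula for $\fnfive$. The $\theta$- and $z$-components involve $\fnthree'$, $\fnfour'$ and $\fnfive$. Substituting $\fnfive$ should produce the first-order system \eqref{geqcmetric}--\eqref{heqcmetric}. The combinations $m-\tfrac{cn}{\fnone^2}$ and the $2c\iev\fnone'^2/\fnone^2$ term arise from the off-diagonal metric term. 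As a sanity check, specialize to $c=0$ and $\fnone=r$: this should reproduce the Chandrasekhar--Kendall system of Example~\ref{cequalszero}.

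\textbf{Main obstacle.} Step 3 is the bookkeeping bottleneck. Converting $\star$ of the $dz\wedge dr$ and $dr\wedge d\theta$ terms back through the nonorthogonal pair $(d\theta,dz)$, and then eliminating $\fnfive'$ via the derivative of the algebraic relation, must produce precisely the stated coefficients. To avoid sign errors, I would do the whole computation in the frame $\xi^i$, using $d\xi^2=\tfrac{\fnone'}{\fnone}\xi^1\wedge\xi^2+\ldots$. I would only translate back to coordinates at the end.
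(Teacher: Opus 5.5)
Your proposal is correct and is essentially the paper's proof. The paper flattens in the rescaled orthogonal coframe $\sigma^1=dr$, $\sigma^2=d\theta+\frac{c}{\varphi(r)^2}\,dz$, $\sigma^3=dz$, so that $\mathfrak{q}^\flat=e^{in\theta}e^{imz}(if\sigma^1+a\sigma^2+b\sigma^3)$, which is equivalent to your $\xi^i$ expression. It then matches $d\mathfrak{q}^\flat=\alpha\star\mathfrak{q}^\flat$ componentwise, solves the $\sigma^2\wedge\sigma^3$ equation for $f$ and substitutes it into the other two, and reads tangency as $f(\varepsilon_1)=f(\varepsilon_2)=0$; your Step 1 checks of $[\partial_\theta,\mathfrak{q}]$ and $K_{\partial_\theta}$ are left implicit there. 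Two small slips: $f'$ never appears, since $d\big(e^{in\theta}e^{imz}\,if\,dr\big)$ involves only $\theta$- and $z$-derivatives, so nothing needs eliminating; and $-\frac{c}{\varphi^2}\partial_\theta+\partial_z=\varphi' e_3$ rather than $\varphi'^{-1}e_3$, though your resulting coefficient $b/\varphi'$ is correct.
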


\begin{proof}[Proof of Theorem~\ref{cmetriccurls}]
    From the explicit form of the metric, we see that an orthogonal basis of $1$-forms is given by\footnote{Here we use a non-orthonormal basis of $1$-forms, unlike in Theorem \ref{transversecurltheorem}, since the $d\sigma^k$ are simpler with this choice.} 
\begin{equation}\label{cmetric1forms}
    \sigma^1 = dr, \qquad \sigma^2 = d\theta + \frac{c}{\fnone(r)^2}\, dz, \qquad \sigma^3 = dz,
    \end{equation}
    satisfying 
    $$ \lvert \sigma^1\rvert_g = 1, \qquad \lvert \sigma^2\rvert_g = \frac{1}{\fnone(r)}, \qquad \lvert \sigma^3\rvert_g = \frac{1}{\fnone'(r)}.$$
    Applying the Hodge star and the exterior derivative to each yields
    \begin{equation*}
        \star \sigma^1 = \fnone(r)\fnone'(r) \, \sigma^2\wedge \sigma^3 , \quad \star \sigma^2 = \frac{\fnone'(r)}{\fnone(r)}\, \sigma^3\wedge \sigma^1, \quad \star \sigma^3 = \frac{\fnone(r)}{\fnone'(r)}\,\sigma^1\wedge \sigma^2.
    \end{equation*}
    and
    \begin{equation*}
        d\sigma^1 = 0, \qquad d\sigma^2 = \frac{2c\fnone'(r) }{\fnone(r)^3}\, \sigma^3\wedge \sigma^1, \qquad d\sigma^3 = 0
    \end{equation*}
    respectively. A curl eigenfield will satisfy $\curl{v} = \alpha v$ for some $\alpha\in\mathbb{R}$, which is equivalent to writing 
$$ dv^{\flat} = \alpha \star\! v^{\flat}.$$
Writing $v$ in Fourier components, we have (since the orbits of $\partial_{\theta}$ and $\partial_z$ both have length $2\pi$)
\begin{equation}\label{Wflatdef} 
v^{\flat} = e^{in\theta} e^{imz} 
\left(i \fnfive(r) \, \sigma^1 + \fnthree(r) \, \sigma^2 + \fnfour(r)\, \sigma^3 \right),
\end{equation}
for integers $m$ and $n$ and some purely radial functions $f,g,h$.
We compute that 
\begin{align*}
    dv^{\flat} &= e^{in\theta}e^{imz} \big( in \sigma^2 + i (m-\tfrac{cn}{\fnone(r)^2}) \sigma^3\big)\wedge \left(i \fnfive(r) \, \sigma^1 + \fnthree(r) \, \sigma^2 + \fnfour(r)\, \sigma^3 \right) \\
    &\qquad\qquad + e^{in\theta}e^{imz} \big( \fnthree'(r) \, \sigma^1\wedge \sigma^2 - \fnfour'(r) \sigma^3\wedge \sigma^1\big) 
    + e^{in\theta}e^{imz} \, \frac{2c\fnone'(r) \fnthree(r)}{\fnone(r)^3} \, \sigma^3\wedge \sigma^1 \\
    &= e^{in\theta}e^{imz} \bigg( 
    \big( \fnthree'(r) + n \fnfive(r)\big) \sigma^1\wedge \sigma^2 
    + i \Big( n\fnfour(r) - (m-\tfrac{cn}{\fnone(r)^2}) \fnthree(r) \Big) \sigma^2\wedge \sigma^3 + \\
    &\qquad\qquad + \Big( -\fnfour'(r) + \frac{2c\fnone'(r)\fnthree(r)}{\fnone(r)^3} - \left( m-\frac{cn}{\fnone(r)^2}\right) \fnfive(r) \Big) \sigma^3\wedge \sigma^1 \bigg)  
\end{align*}
while the Hodge star formulas
lead to 
\begin{align*}
    \star v^{\flat} &=  
    e^{in\theta}e^{imz} \left( i\fnone(r) \fnone'(r) \fnfive(r) \,\sigma^2\wedge \sigma^3 + \frac{\fnone'(r)}{\fnone(r)} \, \fnthree(r) \, \sigma^3\wedge \sigma^1+ \frac{\fnone(r)}{\fnone'(r)}\, \fnfour(r) \, \sigma^1\wedge \sigma^2\right)
\end{align*}

Matching components then gives 
\begin{align*}
     n\fnfour(r) - \left(m-\frac{cn}{\fnone(r)^2}\right) \fnthree(r) &= \alpha \fnone(r)\fnone'(r) \fnfive(r) \\
     \fnthree'(r) + n \fnfive(r) &= \alpha\,\frac{\fnone(r)}{\fnone'(r)}\, \fnfour(r) \\
     \fnfour'(r) - \frac{2c\fnone'(r)\fnthree(r)}{\fnone(r)^3} + \left( m-\frac{cn}{\fnone(r)^2}\right) \fnfive(r) &= -\alpha\, \frac{\fnone'(r)}{\fnone(r)} \, \fnthree(r).
\end{align*}
Using the first equation to solve for $\fnfive$ (assuming $\alpha$ is nonzero), we get the system \eqref{geqcmetric}--\eqref{heqcmetric}. Since the normal derivative at the boundary tori is $\partial_r$, the condition that $v$ is tangent to the boundary is equivalent to $\fnfive(a)=\fnfive(b)=0$, which translates into \eqref{cmetricbcs}. 

The only thing remaining is to lift $v^{\flat}$ given by \eqref{Wflatdef} to a vector field. From the metric \eqref{3disometriesmetric} we have that 
$$ (\sigma^1)^{\sharp} = \partial_r, \qquad (\sigma^2)^{\sharp} = \frac{1}{\fnone(r)^2} \, \partial_{\theta}, \qquad (\sigma^3)^{\sharp} = \frac{1}{\fnone'(r)^2} \left( \partial_z - \frac{c}{\fnone(r)^2} \, \partial_{\theta}\right).$$
The formula \eqref{cmetriccurlfield} then follows directly.
\end{proof}

\bibliographystyle{amsplain}
\bibliography{bibliography.bib}

\vfill

\end{document}